\newcommand{\N}{{\mathds{N}}}
\newcommand{\Z}{{\mathds{Z}}}
\newcommand{\R}{{\mathds{R}}}
\newcommand{\C}{{\mathds{C}}}
\newcommand{\T}{{\mathds{T}}}
\newcommand{\A}{{\mathfrak{A}}}
\newcommand{\bigslant}[2]{{\raisebox{.2em}{$#1$}\left/\raisebox{-.2em}{$#2$}\right.}}
\newcommand{\Nbar}{\overline{\N}}
\newcommand{\Lip}{{\mathsf{L}}}
\newcommand{\Hilbert}{{\mathscr{H}}}
\newcommand{\Kantorovich}[1]{{\mathsf{mk}_{#1}}}
\newcommand{\StateSpace}{{\mathscr{S}}}
\newcommand{\MongeKant}{{Mon\-ge-Kan\-to\-ro\-vich metric}}
\newcommand{\Lqcms}{{\JLL} quantum compact metric space}
\newcommand{\gQqcms}{quasi-Leibniz quantum compact metric space}
\newcommand{\gQVB}{metrized quantum vector bundle}
\newcommand{\unit}{1}
\newcommand{\sa}[1]{{\mathfrak{sa}\left({#1}\right)}}
\newcommand{\inner}[3]{{\left<{#1},{#2}\right>_{#3}}}
\newcommand{\JLL}{Lei\-bniz}
\newcommand{\dom}[1]{{\operatorname*{dom}\left({#1}\right)}}
\newcommand{\diam}[2]{{\mathrm{diam}\left({#1},{#2}\right)}}
\newcommand{\norm}[2]{{\left\|{#1}\right\|_{#2}}}
\newcommand{\CDN}{{\mathsf{D}}}
\newcommand{\cocycle}[1]{{\mathrm{e}_{#1}}}
\newcommand{\worknote}[1]{}
\newcommand{\opnorm}[3]{{\left|\mkern-1.5mu\left|\mkern-1.5mu\left| {#1} \right|\mkern-1.5mu\right|\mkern-1.5mu\right|_{#3}^{#2}}}
\newcommand{\alg}[1]{{\mathfrak{#1}}}
\newcommand{\modlip}[2]{{\module{D}_{#1}\left({#2}\right)}}
\newcommand{\module}[1]{{\mathscr{#1}}}
\newcommand{\qt}[1]{{\mathcal{A}_{#1}}}
\newcommand{\HeisenbergGroup}{{\mathds{H}_3}}
\newcommand{\HeisenbergMod}[2]{{\module{H}_{#1}^{#2}}}
\theoremstyle{plain}
\newtheorem{theorem}{Theorem}[section]
\newtheorem{corollary}[theorem]{Corollary}
\newtheorem{lemma}[theorem]{Lemma}
\newtheorem{proposition}[theorem]{Proposition}
\newtheorem{theorem-definition}[theorem]{Theorem-Definition}
\theoremstyle{definition}
\newtheorem{definition}[theorem]{Definition}
\newtheorem{convention}[theorem]{Convention}
\newtheorem{hypothesis}[theorem]{Hypothesis}
\theoremstyle{remark}
\newtheorem{remark}[theorem]{Remark}
\newtheorem{notation}[theorem]{Notation}
\renewcommand{\geq}{\geqslant}
\renewcommand{\leq}{\leqslant}
\newcommand{\conv}[1]{{*_{#1}}}
\numberwithin{equation}{section}
\begin{document}

\title[D-norms on Heisenberg Modules]{Heisenberg Modules over Quantum $2$-tori are metrized quantum vector bundles}
\author{Fr\'{e}d\'{e}ric Latr\'{e}moli\`{e}re}
\thanks{This work is part of the project supported by the grant H2020-MSCA-RISE-2015-691246-QUANTUM DYNAMICS}
\email{frederic@math.du.edu}
\urladdr{http://www.math.du.edu/\symbol{126}frederic}
\address{Department of Mathematics \\ University of Denver \\ Denver CO 80208}

\date{\today}
\subjclass[2000]{Primary:  46L89, 46L30, 58B34.}
\keywords{Noncommutative metric geometry, Gromov-Hausdorff convergence, Monge-Kantorovich distance, Quantum Metric Spaces, Lip-norms, D-norms, Hilbert modules, noncommutative connections, noncommutative Riemannian geometry, unstable $K$-theory.}

\begin{abstract}
The modular Gromov-Hausdorff propinquity is a distance on classes of modules endowed with quantum metric information, in the form of a metric form of a connection and a left Hilbert module structure. This paper proves that the family of Heisenberg modules over quantum two tori, when endowed with their canonical connections, form a family of {\gQVB s}, as a first step in proving that Heisenberg modules form a continuous family for the modular Gromov-Hausdorff propinquity.
\end{abstract}
\maketitle

%%%%%%%%%%%%%%%%%%%%%%%%%%%%%%%%%%%%%%%%%%%

%\Fontauri

\section{Introduction}

The primary purpose of our research is to bring forth an analytic framework, constructed around Gromov-Hausdorff-like hypertopologies on quantum metric spaces, to bear on problems from mathematical physics and noncommutative geometry \cite{Latremoliere13,Latremoliere13b,Latremoliere14,Latremoliere15,Latremoliere15b,Latremoliere15c,Latremoliere15d,Latremoliere16b}. We constructed an hypertopology on classes of Hilbert modules over quantum metric spaces in \cite{Latremoliere16c} as a far-reaching generalization of the Gromov-Hausdorff propinquity. We constructed a distance, up to full quantum isometry, called the modular Gromov-Hausdorff propinquity, on a class of objects which generalize Hermitian vector bundles over Riemannian manifolds. These {\gQVB s} are natural objects for noncommutative geometry and mathematical physics, as they carry a metric structure and a form of generalized connection, and we are now able to discuss such questions as continuity and approximations, not only of quantum compact metric spaces, but also of their associated modules. As modules are fundamental objects in C*-algebra theory and their geometry, this new development allows us to further our goal of a geometric theory of the class of C*-algebras.

This paper brings into our noncommutative metric geometry framework some very important examples of modules, namely Heisenberg modules over quantum $2$-tori. These modules come naturally equipped with a connection induced by the action of the Heisenberg Lie group. This noncommutative construct played the central role in the beginning of Connes' noncommutative geometry \cite{Connes80}, where the Heisenberg modules over quantum $2$-tori and their connections were first built. Rieffel \cite{Rieffel93} then proved that these Heisenberg modules, the finite rank free modules, and their direct sums, describe all the finitely generated projective modules over quantum tori. Connes and Rieffel \cite{ConnesRieffel87} proved that the natural connections on Heisenberg modules solve the noncommutative Yang-Mills problem. We will now prove that Heisenberg modules are fundamental examples of {\gQVB s}. Doing so then allows us to discuss in \cite{Latremoliere18a} the continuity, for the modular propinquity, of family of Heisenberg modules as the quantum $2$-tori vary continuously for the propinquity. This will be our first, significant application of the modular propinquity. Informally, the continuity result in \cite{Latremoliere18a} can be understood as a form of continuity of K-theory. Thus, this paper and \cite{Latremoliere18a} are two parts of the study of the metric geometry of Heisenberg modules.

As a matter of convention throughout this paper, we will use the following notations.

\begin{notation}
By default, the norm of a normed vector space $E$ is denoted by $\norm{\cdot}{E}$. When $\A$ is a C*-algebra, the space of self-adjoint elements of $\A$ is denoted by $\sa{\A}$. The state space of $\A$ is denoted by $\StateSpace(\A)$. In this work, all C*-algebras $\A$ will always be unital with unit $\unit_\A$.
\end{notation}

\begin{convention}
If $P$ is some seminorm on a vector subspace $D$ of a vector space $E$, then for all $x\in E\setminus D$ we set $P(x) = \infty$. With this in mind, the domain $D$ of $P$ is the set $\{ x \in E : P(x) < \infty \}$, with the usual convention that $0\infty = 0$ while all other operations involving $\infty$ give $\infty$.
\end{convention}

Noncommutative metric geometry \cite{Connes89,Rieffel98a,Rieffel00} studies noncommutative generalizations of Lipschitz algebras, defined as follows.

\begin{definition}
An ordered pair $(\A,\Lip)$ is a \emph{\Lqcms} when $\A$ is a unital C*-algebra, and $\Lip$ is a seminorm defined on a dense Jordan-Lie subalgebra $\dom{\Lip}$ of the space of self-adjoint elements $\sa{\A}$ of $\A$ such that:
\begin{enumerate}
\item $\{a\in\dom{\Lip} : \Lip(a) = 0 \} = \R\unit_\A$,
\item the {\MongeKant} $\Kantorovich{\Lip}$ defined on the state space $\StateSpace(\A)$ of $\A$ by setting, for any two $\varphi,\psi \in \StateSpace(\A)$:
\begin{equation*}
\Kantorovich{\Lip}(\varphi,\psi) = \sup\left\{ |\varphi(a) - \psi(a)| : a\in\dom{\Lip}, \Lip(a) \leq 1 \right\}
\end{equation*}
metrizes the weak* topology restricted to $\StateSpace(\A)$,
\item $\Lip$ is lower semi-continuous,
\item $\max\left\{ \Lip\left(\frac{a b + b a}{2}\right),\Lip\left(\frac{a b - b a}{2i}\right)\right\} \leq \|a\|_\A \Lip(b) + \|b\|_\A \Lip(a)$.
\end{enumerate}
\end{definition}

{\Lqcms s}, and more generally {\gQqcms s} (a generalization we will not need in this paper), form a category with the appropriate notion of Lipschitz morphisms \cite{Latremoliere16}, containing such important examples as quantum tori \cite{Rieffel98a}, Connes-Landi spheres \cite{li03}, group C*-algebras for Hyperbolic groups and nilpotent groups \cite{Rieffel02,Ozawa05}, AF algebras \cite{Latremoliere15c}, Podl{\`e}s spheres \cite{Kaad18}, certain C*-crossed-products \cite{Hawkins13}, among others. Any compact metric space $(X,d)$ give rise to the {\Lqcms} $(C(X),\mathrm{Lip})$ where $C(X)$ is the C*-algebra of $\C$-valued continuous functions over $X$, and $\mathrm{Lip}$ is the Lipschitz seminorm induced by $d$.

Rieffel characterized the main property of {\Lqcms s} as follows:
\begin{theorem}[{\cite[Theorem 1.9]{Rieffel98a}}]\label{az-thm}
Let $(\A,\Lip)$ be a pair with a unital C*-algebra $\A$ and a seminorm $\Lip$ defined on a dense subspace $\dom{\Lip}$ of $\sa{\A}$. The following assertions are equivalent:
\begin{enumerate}
\item the {\MongeKant} $\Kantorovich{\Lip}$ defined for any two $\varphi,\psi\in\StateSpace(\A)$ by $\Kantorovich{\Lip}(\varphi,\psi) = \sup\left\{|\varphi(a) - \psi(a)|:\Lip(a)\leq 1\right\}$, metrizes the weak* topology on $\StateSpace(\A)$,
\item the diameter $\diam{\StateSpace(\A)}{\Kantorovich{\Lip}}$ is finite and:
\begin{equation*}
\left\{a\in\sa{\A} : \Lip(a)\leq 1\text{ and }\|a\|_\A \leq 1 \right\}
\end{equation*}
is norm precompact.
\end{enumerate}
\end{theorem}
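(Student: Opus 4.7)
My plan is to prove the equivalence by comparing the Monge-Kantorovich-induced topology with the weak* topology on $\StateSpace(\A)$, which is weak*-compact by Banach-Alaoglu. The direction $(1) \Rightarrow (2)$ follows from Arzela-Ascoli applied to the evaluation family, while $(2) \Rightarrow (1)$ hinges on a reduction of the supremum defining $\Kantorovich{\Lip}$ to a sup over a norm-precompact set, followed by a finite-net argument.

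For $(1) \Rightarrow (2)$: under (1), $(\StateSpace(\A), \Kantorovich{\Lip})$ is a compact metric space, hence of finite diameter. For the precompactness of $L_1 := \{a \in \sa{\A} : \Lip(a) \leq 1,\ \|a\|_\A \leq 1\}$, I would use that for self-adjoint $a$ the map $a \mapsto \hat{a}$, with $\hat{a}(\varphi) := \varphi(a)$, isometrically embeds $\sa{\A}$ into $C(\StateSpace(\A))$ with the uniform norm, since $\|a\|_\A = \sup_{\varphi \in \StateSpace(\A)} |\varphi(a)|$. The image of $L_1$ is uniformly bounded by $1$, and the inequality $|\hat{a}(\varphi) - \hat{a}(\psi)| \leq \Lip(a) \Kantorovich{\Lip}(\varphi,\psi)$, immediate from the definition of $\Kantorovich{\Lip}$, makes this image $1$-equicontinuous on the compact space $(\StateSpace(\A),\Kantorovich{\Lip})$. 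Arzela-Ascoli then yields precompactness in $C(\StateSpace(\A))$, which pulls back to $L_1$ under the isometry.

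For $(2) \Rightarrow (1)$, I would first verify $\Kantorovich{\Lip}$ is a genuine metric: finiteness from the diameter hypothesis, and separation from the density of $\dom{\Lip}$ in $\sa{\A}$ together with norm-continuity of states. That the $\Kantorovich{\Lip}$-topology is finer than weak* is the easier inclusion: for $a \in \sa{\A}$ and $\epsilon > 0$, pick $a' \in \dom{\Lip}$ with $\|a-a'\|_\A < \epsilon$ and bound $|\varphi_n(a) - \varphi(a)|$ by $2\epsilon + \Lip(a')\Kantorovich{\Lip}(\varphi_n,\varphi)$. The reverse inclusion is the heart of the argument. Fix a base state $\mu_0 \in \StateSpace(\A)$ and set $D := \diam{\StateSpace(\A)}{\Kantorovich{\Lip}} < \infty$. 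For $a \in \dom{\Lip}$ with $\Lip(a) \leq 1$, the element $b := a - \mu_0(a)\unit_\A$ still satisfies $\Lip(b) = \Lip(a) \leq 1$, and since $|\varphi(b)| = |\varphi(a) - \mu_0(a)| \leq D$ for every state $\varphi$, we also have $\|b\|_\A \leq D$. The substitution leaves $\varphi(a) - \psi(a)$ unchanged, so $\Kantorovich{\Lip}(\varphi,\psi)$ equals the supremum of $|\varphi(b) - \psi(b)|$ over $L_D := \{b \in \sa{\A} : \Lip(b) \leq 1,\ \|b\|_\A \leq D\}$, which is norm-precompact as a scaling of $L_1$. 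Given a weak*-convergent net $\varphi_n \to \varphi$ and $\epsilon > 0$, I would choose a finite $\epsilon/3$-net $b_1, \ldots, b_k$ in $L_D$; weak* convergence yields $|\varphi_n(b_i) - \varphi(b_i)| < \epsilon/3$ for $n$ large and all $i$, and the triangle inequality promotes this to $\sup_{b \in L_D} |\varphi_n(b) - \varphi(b)| < \epsilon$, hence $\Kantorovich{\Lip}(\varphi_n,\varphi) \to 0$.

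The main obstacle is the substitution $a \mapsto a - \mu_0(a)\unit_\A$ in the reverse inclusion: this is where the operator-norm precompactness of $L_1$ gets converted, via the finite diameter, into equicontinuity of evaluation functionals on $\StateSpace(\A)$. It is the essential geometric content of the theorem; the remaining ingredients — Arzela-Ascoli in one direction, the finite-cover argument in the other — are routine once this reduction is in place.
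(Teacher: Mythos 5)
The paper does not prove this statement --- it quotes it from Rieffel --- so there is no in-paper argument to compare against; your proof is correct and is essentially Rieffel's original one (Arzel\`a--Ascoli applied to the evaluation maps $a\mapsto\hat a$ on the compact metric space $(\StateSpace(\A),\Kantorovich{\Lip})$ for $(1)\Rightarrow(2)$, and the recentering-plus-finite-net argument for $(2)\Rightarrow(1)$). The only caveat is that the step $\Lip\left(a-\mu_0(a)\unit_\A\right)=\Lip(a)$ silently uses $\Lip(\unit_\A)=0$; this is not among the hypotheses as the paper states them, but it is part of Rieffel's actual hypotheses (and of the definition of a \Lqcms{} given just above the theorem), and without it the reduction of the supremum to the norm-precompact set $L_D$ would not go through.
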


In \cite{Latremoliere16c}, we extend this idea to noncommutative analogues vector bundles. Our classical prototype of a {\gQVB} is given by the module $\Gamma V$ of continuous sections of a vector bundle $V$ over a compact Riemannian manifold $M$ with metric $g$, endowed with a hermitian metric $h$ and some associated metric connection $\nabla$. For any two $\omega,\eta \in \Gamma V$, we then set $\inner{\omega}{\eta}{V} : x \in M \mapsto \int_X h_x(\omega_x,\eta_x) \, d\mathrm{Vol}(x)$ where $\mathrm{Vol}$ is the volume form over $M$ for $g$, which turns $\Gamma V$ into a $C(M)$-left Hilbert module. We also define, for all $\omega \in \module{M}$, the norm $\CDN(\omega)$ as the operator norm for the operator $\nabla\omega : X \in \Gamma(TM) \mapsto \nabla_X \omega \in \Gamma V$ --- noting that the space of vector fields $\Gamma TM$ of $M$ has a norm induced by the metric $g$. Our general definition for a {\gQVB} abstracts this picture. For the present paper, we shall only deal with so-called Leibniz {\gQVB s}, even though our framework in \cite{Latremoliere16c} is more general. This is the main definition for this paper.

\begin{definition}[{\cite[Definition 3.8]{Latremoliere16c}}]\label{qvb-def}
A 5-tuple $(\module{M},\inner{\cdot}{\cdot}{\module{M}},\CDN,\A,\Lip)$ is a \emph{\gQVB} when:
\begin{enumerate}
\item a {\Lqcms} $(\A,\Lip)$ called the base space,
\item a $\A$-left Hilbert module $(\module{M},\inner{\cdot}{\cdot}{\module{M}})$,
\item a norm $\CDN$ defined on a dense subspace of $\module{M}$ such that $\CDN(\omega) \geq \sqrt{\inner{\omega}{\omega}{\module{M}}}$ for all $\omega\in \module{M}$, and such that the set:
\begin{equation*}
\left\{ \omega \in \module{M} : \CDN(\omega) \leq 1 \right\}
\end{equation*}
is compact in $\module{M}$,
\item for all $a \in \sa{\A}$ and for all $\omega \in \module{M}$, we have:
\begin{equation*}
\CDN_{\module{M}}\left(a\omega\right) \leq (\|a\|_\A+\Lip_\A(a)) \CDN_{\module{M}}(\omega)\text{,}
\end{equation*}
which we call the \emph{inner Leibniz inequality} for $\CDN_{\module{M}}$,
\item for all $\omega,\eta \in \module{M}$, we have:
\begin{equation*}
\max\left\{\Lip_\A\left(\Re\inner{\omega}{\eta}{\module{M}}\right),\Lip_\A\left(\Im\inner{\omega}{\eta}{\module{M}}\right)\right\} \leq 2 \CDN_{\module{M}}(\omega) \CDN_{\module{M}}(\eta) \text{,}
\end{equation*}
which we call the \emph{modular Leibniz inequality} for $\CDN_{\module{M}}$.
\end{enumerate}
\end{definition}

We refer to \cite{Latremoliere16c} for a discussion of these objects, where in particular \cite[Example 3.10]{Latremoliere16c} shows that the prototype of a hermitian vector bundle over a compact Riemannian manifold, as sketched above, is indeed an example of a {\gQVB}.  We note that Definition (\ref{qvb-def}) includes a compactness condition which mirrors the compactness condition in Theorem (\ref{az-thm}).

Heisenberg modules, equipped with the analogue of a connection as in \cite{Connes80}, over quantum $2$-tori, have a similar signature to a {\gQVB}. The key difficulty is to prove that the connection can be used to define a D-norm, as in Definition (\ref{qvb-def}), whose unit ball is actually compact in the Hilbert modules norm of Heisenberg modules. The main result of  this paper is to prove that indeed, this is the case. 

We begin our work with a presentation of Heisenberg modules, which allow us to fix our notations for the rest of the paper and \cite{Latremoliere18a}. We then prove a series of lemmas about convergence in the Hilbert modules norm for the Heisenberg modules --- as these norms are complicated, these lemmas will prove very helpful both in this paper and in \cite{Latremoliere18a}. We prove in the process of this second section that Heisenberg modules form a continuous field of Banach spaces --- a result which will prove helpful in \cite{Latremoliere18a} and is of independent interest. This result uses the same tools as the proof that the action of the Heisenberg group on Heisenberg modules is strongly continuous, which is part of the next section of this paper, where properties of the Heisenberg group actions which we will need in our work are established. Now, with all these basic tools in hand, we show how to use Lie group actions to define D-norm candidates, which have all the desired properties of D-norms except maybe for the key compactness property of their unit ball. This compactness property for the Heisenberg modules D-norms is the subject of the last section of this paper, which conclude our main result.

Importantly, our methods in this paper are designed not only in support of the main theorem here, but also as key tools for the study of the continuity of the Heisenberg modules in \cite{Latremoliere18a}. For the problem of continuity, we will need not just to be able to pick finite subsets of the compact unit ball of some D-norm which are $\varepsilon$-dense for some $\varepsilon > 0$, but also to pick such a finite set which is uniformly $\varepsilon$-dense across several Heisenberg modules as the D-norms vary. To do so, we will use the approximation operators introduced in the last section of this paper.

\section{Background on Quantum $2$-tori and Heisenberg modules}

Quantum $2$-tori are the twisted convolution C*-algebras of $\Z^2$. The projective finitely generated modules over quantum tori have been extensively studied, and next to the free modules, the most important class of projective, finitely generated modules over a quantum torus are the Heisenberg modules. This subsection introduces these modules, as well as the notations we will use throughout this section regarding quantum tori.

Twisted group C*-algebras are defined by twisting the convolution product over a locally compact group by a representative of a continuous $2$-cocycle of the group.

\begin{notation}
For any $\theta \in \R$, we define the skew bicharacter of $\R^2$:
\begin{equation}\label{sigma-eq}
\cocycle{\theta} : ((x_1,y_1),(x_2,y_2)) \in \R^2\times \R^2 \longmapsto \exp\left(i\pi\theta(x_2 y_1 - x_1 y_2)\right)\text{.}
\end{equation}

By \cite{Kleppner65}, any $2$-cocycle of $\Z^2$ is cohomologous to the restriction of a skew bicharacter $\cocycle{\theta}$ to $\Z^2\times\Z^2$ for some $\theta\in\R$. We shall use the same notation for $\cocycle{\theta}$ and its restriction to $\Z^2$.

Moreover, for any $\theta,\vartheta\in\R$, the skew bicharacters $\cocycle{\theta}$ and $\cocycle{\vartheta}$ of $\Z^2$ are cohomologous if and only if $\theta \equiv \vartheta \mod 1$. We note that, as skew bicharacters of $\R^2$, they are cohomologous if and only if $\theta = \vartheta$.
\end{notation}

We define the twisted convolution products on $\ell^1(\Z^2)$, where we use the following notation.

\begin{notation}
For any (nonempty) set $E$ and any $p \in [1,\infty)$, the set $\ell^p(E)$ is the set of all absolutely $p$-summable complex valued functions over $E$, endowed with the norm:
\begin{equation*}
\|\xi\|_{\ell^p(E)} = \left(\sum_{x \in E} |\xi(x)|^p\right)^{\frac{1}{p}}
\end{equation*}
for all $\xi \in \ell^p(E)$.

We write $\delta_n$ the function which is $1$ at $n$ and $0$ otherwise; this function is an element of $\ell^p(E)$ for all $p$.

Moreover, if $p = 2$ then $(\ell^2(E),\|\cdot\|_{\ell^2(E)})$ is a Hilbert space, where the inner product $\inner{\xi}{\eta}{\ell^2(E)} = \sum_{x\in E} \xi(x)\overline{\eta(x)}$ for all $\xi,\eta\in\ell^2(E)$.
\end{notation}

We now define:

\begin{definition}
Let $\theta\in\R$ and $\cocycle{\theta}$ be defined by Expression (\ref{sigma-eq}). The \emph{twisted convolution product $\conv{\theta}$} is defined for all $f,g \in \ell^1(\Z^2)$ and for all $n\in\Z^2$:
\begin{equation*}
f\conv{\theta} g (n) = \sum_{m\in\Z^2} f(m) g(n - m)\cocycle{\theta}(m,n)\text{.}
\end{equation*}
The \emph{adjoint} of any $f \in \ell^1(\Z^2)$ is defined for all $n\in\Z^2$ by:
\begin{equation*}
f^\ast(n) = \overline{f(-n)}\text{.}
\end{equation*}
\end{definition}

One checks easily that $\left(\ell^1(\Z^2),\conv{\theta},\cdot^{\ast}\right)$ is a *-algebra. In particular, the adjoint operation is an isometry of $\left(\ell^1(\Z^2),\|\cdot\|_{\ell^1(\Z^2)}\right)$. We now wish to construct its enveloping C*-algebra. To do so, we shall choose a natural faithful *-representation of $\left(\ell^1(\Z^2),\conv{\theta},\cdot^{\ast}\right)$ on $\ell^2(\Z^2)$. This representation was a key ingredient in the construction of bridges between quantum tori in our work in \cite{Latremoliere13c} on convergence of quantum tori for the quantum propinquity and will play a role in the convergence of Heisenberg modules.

\begin{notation}
If $T : E\rightarrow F$ is a continuous linear map between two normed spaces, we write its norm as $\opnorm{T}{E}{F}$. When $E=F$, we simply write $\opnorm{T}{}{F}$.
\end{notation}

\begin{theorem}[{\cite{Zeller-Meier68}}]\label{qt-construction-thm}
Let $\theta \in \R$. We define, for any $n \in \Z^2$ and $\xi \in \ell^2(\Z^2)$, the function:
\begin{equation*}
U_\theta^n \xi : m \in \Z^2 \mapsto \cocycle{\theta}(m,n)\xi(m+n) \text{.}
\end{equation*}
The map $n\in\Z^2 \mapsto U_\theta^n$ is a unitary $\cocycle{\theta}$-projective representation of $\Z^2$, i.e. $U_\theta^n U_\theta^m = \cocycle{\theta}(n,m) U_\theta^{n+m}$ for all $n,m \in \Z^2$.

If, for all $f \in \left(\ell^1(\Z^2),\conv{\theta},\cdot^{\ast}\right)$, we define:
\begin{equation*}
\pi_\theta(f) = \sum_{n \in \Z^2} f(n) U_\theta^n
\end{equation*}
which is a bounded operator on $\ell^2(\Z^2)$ with:
\begin{equation*}
\opnorm{\pi_\theta(f)}{}{\ell^2(\Z^2)} \leq \|f\|_{\ell^1(\Z^2)}\text{,}
\end{equation*}
then $\pi_\theta$ is a faithful *-representation of $(\ell^1(\Z^2),\conv{\theta},\ast)$. 
\end{theorem}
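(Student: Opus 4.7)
The plan is to verify the four assertions in sequence, the unifying observation being that $\cocycle{\theta}(m,n) = \exp(i\pi\theta B(m,n))$, where $B((x_1,y_1),(x_2,y_2)) = x_2 y_1 - x_1 y_2$ is a bilinear, alternating form on $\Z^2$. Consequently $\cocycle{\theta}$ is bimultiplicative in each argument, has modulus one, and satisfies $\cocycle{\theta}(n,-n) = 1$.

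First I would show each $U_\theta^n$ is unitary on $\ell^2(\Z^2)$. The pointwise factor $\cocycle{\theta}(m,n)$ has modulus $1$, so composition with the shift $\xi \mapsto \xi(\cdot+n)$ is an $\ell^2$-isometry, hence $U_\theta^n$ is isometric; a direct inner-product computation identifies $(U_\theta^n)^\ast = U_\theta^{-n}$, and the projective relation (established next) with $m = -n$ yields $U_\theta^n U_\theta^{-n} = \cocycle{\theta}(n,-n) I = I$, so $U_\theta^n$ is surjective. For the projective representation identity $U_\theta^n U_\theta^m = \cocycle{\theta}(n,m) U_\theta^{n+m}$, evaluating both sides at a point $k \in \Z^2$ reduces matters to
\[
\cocycle{\theta}(k,n)\cocycle{\theta}(k+n,m) = \cocycle{\theta}(n,m)\cocycle{\theta}(k,n+m),
\]
and by bilinearity of $B$ both sides equal $\exp(i\pi\theta(B(k,n)+B(k,m)+B(n,m)))$.

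The operator norm estimate $\opnorm{\pi_\theta(f)}{}{\ell^2(\Z^2)} \leq \|f\|_{\ell^1(\Z^2)}$ then follows from the triangle inequality applied to the absolutely convergent series $\sum_n f(n) U_\theta^n$, since each $U_\theta^n$ has unit operator norm; this also certifies $\pi_\theta(f)$ is well defined as a bounded operator on $\ell^2(\Z^2)$. To verify $\pi_\theta$ is a $\ast$-homomorphism, I would expand $\pi_\theta(f)\pi_\theta(g) = \sum_{m,n} f(m)g(n)\cocycle{\theta}(m,n)U_\theta^{m+n}$, reindex by $k = m+n$, and use the identity $\cocycle{\theta}(m,k-m) = \cocycle{\theta}(m,k)$ (immediate from $\cocycle{\theta}(m,-m) = 1$ and bilinearity) to recognize the coefficient of $U_\theta^k$ as $(f\conv{\theta} g)(k)$. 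For the adjoint, one computes $\pi_\theta(f)^\ast = \sum_n \overline{f(n)} (U_\theta^n)^\ast = \sum_n \overline{f(n)} U_\theta^{-n} = \pi_\theta(f^\ast)$ after reindexing with $m = -n$.

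Finally, faithfulness is immediate by evaluating at the generator $\delta_0$: for every $k \in \Z^2$ one has $(\pi_\theta(f)\delta_0)(k) = f(-k)\cocycle{\theta}(k,-k) = f(-k)$, so $\pi_\theta(f) = 0$ forces $f \equiv 0$. No step presents a genuine obstacle; the only care required is in tracking the various bicharacter identities, and each of these reduces to the bilinear, alternating structure of $B$.
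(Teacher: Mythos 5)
Your proof is correct and complete: the paper itself gives no proof of this statement, citing it to Zeller-Meier, and your verification is the standard one. All the key identities check out --- in particular the cocycle identity $\cocycle{\theta}(k,n)\cocycle{\theta}(k+n,m)=\cocycle{\theta}(n,m)\cocycle{\theta}(k,n+m)$ via bilinearity of $B$, the computation $(U_\theta^n)^\ast=U_\theta^{-n}$, the reindexing that identifies the coefficient of $U_\theta^k$ in $\pi_\theta(f)\pi_\theta(g)$ with $(f\conv{\theta}g)(k)$, and the evaluation at $\delta_0$ for faithfulness.
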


Thus, we may define a C*-norm on $\ell^1(\Z^2)$ by setting:
\begin{equation*}
\|f\|_{\qt{\theta}} = \opnorm{\pi_\theta(f)}{}{\ell^2(\Z^2)}
\end{equation*}
for all $f\in \ell^1(\Z^2)$. We thus can define quantum $2$-tori.

\begin{definition}
The \emph{quantum $2$-torus $\qt{\theta}$} is the completion of $(\ell^1(\Z^2),\conv{\theta},\cdot^\ast)$ for the norm $\opnorm{\pi_\theta(\cdot)}{}{\ell^2(\Z^2)}$.
\end{definition}

As per our general convention, the norm on $\qt{\theta}$ is denoted by $\|\cdot\|_{\qt{\theta}}$ for all $\theta \in \R$.

\begin{remark}
Let $\theta \in \R$. By construction, $\ell^1(\Z^2)$ is identified with a dense *-subalgebra of $\qt{\theta}$, and we shall employ this identification all throughout this paper. With this identification, we also note that for all $f \in \ell^1(\Z^2)$ we have $\|f\|_{\qt{\theta}} \leq \|f\|_{\ell^1(\Z^2)}$, a fact which we will use repeatedly in the next section.
\end{remark}

We take one derogation to the convention of using the same symbol for an element of $\ell^1(\Z^2)$ and its counter part in a given quantum torus, because the following notation is at once common and convenient.
\begin{notation}
Let $\theta\in \R$. The element $\delta_{1,0}$ is denoted by $u_\theta$ and the element $\delta_{0,1}$ is denoted by $v_\theta$ when regarded as elements of $\qt{\theta}$.
\end{notation}

The geometry, and in particular the metric geometry \cite{Rieffel98a}, of the quantum tori is obtained by transport of structure using the dual action of the torus given as follows:

\begin{theorem-definition}\cite{Zeller-Meier68}\label{dual-action-thm}
For all $z = (z_1,z_2) \in \T^2$ there exists a unique *-automorphism $\beta_\theta^z$ of $\qt{\theta}$ such that, for any $f\in \ell^1(\Z^2)$ and $(n,m) \in \Z^2$, we have:
\begin{equation*}
\beta_\theta^z f (n,m) = z_1^n z_2^m f(n,m) \text{.}
\end{equation*}
The map $z\in\T^2 \mapsto \beta_\theta^z$ is a strongly continuous action of $\T^2$ on $\qt{\theta}$ called the \emph{dual action}. Moreover, $\beta$ is ergodic, in the sense that:
\begin{equation*}
\left\{ a \in \qt{\theta} : \forall z \in \T^2 \quad \beta^z(a) = a \right\} = \C\unit_{\qt{\theta}}\text{.}
\end{equation*}
\end{theorem-definition}

We now turn to the class of modules to which we shall apply our new modular propinquity. We construct these modules following \cite{Connes80} using the universal property of quantum $2$-tori, which we now recall.

\begin{proposition}[\cite{Zeller-Meier68}]\label{qt-universal-prop}
Let $\theta\in\R$. If $U$, $V$ are two unitary operators on some Hilbert space $\Hilbert$ such that $U V = \exp(2i\pi\theta) V U$ for some $\theta\in [0,1)$, then there exists a *-morphism $\varpi : \qt{\theta} \rightarrow \alg{B}(\Hilbert)$ such that $\varpi(u_\theta) = U$ and $\varpi_\theta(v_\theta) = V$. The range of $\varpi$ is $C^\ast(U,V)$.
\end{proposition}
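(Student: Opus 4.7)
The plan is to construct a *-representation of the twisted Banach *-algebra $(\ell^1(\Z^2),\conv{\theta},\cdot^*)$ on $\Hilbert$ starting from the pair $(U,V)$, and then extend it by continuity to $\qt{\theta}$. First, I would promote $(U,V)$ to a $\cocycle{\theta}$-projective unitary representation $W : n \in \Z^2 \mapsto W^n \in \alg{B}(\Hilbert)$ mirroring $U_\theta$ from Theorem \ref{qt-construction-thm}, namely satisfying $W^n W^m = \cocycle{\theta}(n,m) W^{n+m}$ with $W^{(1,0)} = U$ and $W^{(0,1)} = V$. Seeking $W^{(p,q)} = \phi(p,q)\, U^p V^q$ for a unimodular scalar $\phi(p,q)$, one uses the commutation relation $UV = e^{2i\pi\theta}VU$ to rewrite products, and the cocycle identity then determines $\phi$ uniquely up to a coboundary by a direct computation; each $W^n$ is manifestly unitary since $U$ and $V$ are.

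Second, I would define $\tilde\varpi : \ell^1(\Z^2) \to \alg{B}(\Hilbert)$ by $\tilde\varpi(f) = \sum_{n \in \Z^2} f(n)\, W^n$. The series converges absolutely in operator norm, so $\opnorm{\tilde\varpi(f)}{}{\Hilbert} \leq \|f\|_{\ell^1(\Z^2)}$, and a routine calculation using the cocycle identity for $W$ together with the definitions of $\conv{\theta}$ and of the involution on $\ell^1(\Z^2)$ shows that $\tilde\varpi$ is a *-homomorphism. The main obstacle is then to show the sharper bound $\opnorm{\tilde\varpi(f)}{}{\Hilbert} \leq \|f\|_{\qt{\theta}}$ for all $f \in \ell^1(\Z^2)$, which would allow extending $\tilde\varpi$ by density to the desired *-morphism $\varpi : \qt{\theta} \to \alg{B}(\Hilbert)$. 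The crucial input is the amenability of $\Z^2$: for an amenable group, the maximal and reduced twisted group C*-algebras coincide, so the specific representation $\pi_\theta$ used to define $\qt{\theta}$ already achieves the universal C*-norm among all *-representations of $(\ell^1(\Z^2),\conv{\theta},\cdot^*)$. One can make this concrete via a Fell-type absorption argument --- the tensor-product representation $W \otimes \pi_\theta$ on $\Hilbert \otimes \ell^2(\Z^2)$ being, after a cocycle adjustment, equivalent to an amplification of $\pi_\theta$ --- combined with a F\o lner averaging over $\Z^2$ to dominate $\opnorm{\tilde\varpi(f)}{}{\Hilbert}$ by $\opnorm{\pi_\theta(f)}{}{\ell^2(\Z^2)} = \|f\|_{\qt{\theta}}$.

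Finally, once $\varpi$ has been extended to all of $\qt{\theta}$, its range is a C*-subalgebra of $\alg{B}(\Hilbert)$ containing $U = \varpi(u_\theta)$ and $V = \varpi(v_\theta)$; since this range is the closed linear span of $\{W^n : n \in \Z^2\}$ and each $W^n$ is a scalar multiple of a word in $U, V, U^*, V^*$, it coincides exactly with $C^*(U,V)$.
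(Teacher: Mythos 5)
The paper offers no proof of this proposition: it is quoted from Zeller-Meier's work on twisted group C*-algebras, so there is no in-text argument to compare yours against. Your outline is the standard proof and is correct in substance. Steps one and two (promoting $(U,V)$ to a $\cocycle{\theta}$-projective unitary representation $W$ of $\Z^2$ by inserting unimodular phases $\phi(p,q)$ in front of $U^pV^q$, and checking that $f\mapsto \sum_{n}f(n)W^n$ is an $\ell^1$-contractive *-homomorphism of $(\ell^1(\Z^2),\conv{\theta},\cdot^\ast)$) are routine, modulo the usual sign bookkeeping between the commutation relation and the convention for $\cocycle{\theta}$ in Expression (\ref{sigma-eq}); note that what the computation pins down is only the cohomology class of the cocycle of $W$, and $\phi$ is exactly the coboundary correction needed to land on $\cocycle{\theta}$ itself. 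You have correctly located the entire mathematical content in the third step: the inequality $\opnorm{\tilde\varpi(f)}{}{\Hilbert}\leq\|f\|_{\qt{\theta}}$ is the statement that the reduced twisted group C*-algebra of $\Z^2$ coincides with the maximal one, and amenability is indeed the reason. Your sketch of that step is slightly garbled as written --- $W\otimes\pi_\theta$ is a $\cocycle{\theta}^2$-projective representation, so the absorption argument must pair $W$ with the regular representation for the \emph{conjugate} cocycle (or with $\pi_{-\theta}$) before the F{\o}lner averaging can be run --- but you flag the needed ``cocycle adjustment,'' and this is precisely the theorem established in the cited reference, so deferring to it is legitimate here. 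The final step on the range is fine: the image of a *-morphism of C*-algebras is closed, and it is the closed linear span of the $W^n$, each of which is a unimodular multiple of a word in $U,V,U^\ast,V^\ast$, hence equals $C^\ast(U,V)$.
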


Another way to state Proposition (\ref{qt-universal-prop}) is that, for any $\theta\in\R$, if $\varsigma$ is some projective representation of $\Z^2$ on some Hilbert space $\Hilbert$ for some multiplier of $\Z^2$ cohomologous to $\cocycle{\theta}$, then $\Hilbert$ is a module over $\qt{\theta}$. Indeed, Proposition (\ref{qt-universal-prop}) gives us a *-morphism $\varpi$ from $\qt{\theta}$ to the C*-algebra $\alg{B}(\Hilbert)$ of all bounded linear operators on $\Hilbert$, with $\varpi(u_\theta) = \varsigma^{1,0}$ and $\varpi(v_\theta) = \varsigma^{0,1}$. Thus $\Hilbert$ is a $\qt{\theta}$ module.

With this observation in mind, we now turn to the construction of some particular projective representations of $\Z^2$. The idea, found in \cite{Connes80} and explicit in \cite{Rieffel88}, is to take the tensor product of a projective representation of $\R^2$, restricted to $\Z^2$, and a finite dimensional projective representation of $\Z_q$ for some $q\in\N\setminus\{0\}$. By adjusting the choice of the multipliers associated with each projective representation, we get the desired module structure.

Projective representations of $\R^2$ are naturally related to the representations of the Heisenberg group, and we will make important use of this fact in our work. We thus begin with setting our notations for the Heisenberg group.

\begin{convention}
The vector space $\C^d$ is endowed by default with its standard inner product $\inner{(z_1,\ldots,z_d)}{(y_1,\ldots,y_d)}{\C^d} = \sum_{j=1}^d z_j\overline{y_j}$, whose associated norm is denoted by $\|\cdot\|_{\C^d}$.
\end{convention}

\begin{notation}\label{Schrodinger-notation}
The Heisenberg group is the Lie group given by:
\begin{equation*}
\HeisenbergGroup = \left\{ \begin{pmatrix} 1 & x & u \\ 0 & 1 & y \\ 0 & 0 & 1 \end{pmatrix} : x,y,u \in \R \right\}\text{.}
\end{equation*}
We shall identify $\HeisenbergGroup$ with $\R^3$ via the natural map $(x,y,u)\in\R^3\mapsto \begin{pmatrix} 1 & x & u \\ 0 & 1 & y \\ 0 & 0 & 1 \end{pmatrix}$, which is a Lie group isomorphism once we equip $\R^3$ with the multiplication:
\begin{equation*}
(x_1,y_1,u_1)(x_2,y_2,u_2) = (x_1 + x_2, y_1 + y_2, u_1 + u_2 + x_1 y_2)
\end{equation*}
for all $(x_1,y_1,u_1), (x_2,y_2,u_2) \in \R^3$.

The importance of the Heisenberg group for quantum mechanics \cite{Folland89} may be gleaned by looking at its Lie algebra, which is given by:
\begin{equation*}
\mathds{h} = \left\{ \begin{pmatrix} 0 & x & u \\ 0 & 0 & y \\ 0 & 0 & 0 \end{pmatrix} : x,y,u \in \R \right\}
\end{equation*}
which is a $2$-nilpotent Lie algebra. We easily compute that for all $x,y,u \in \R^3$:
\begin{equation}\label{Heisenberg-Exp-eq}
\exp\begin{pmatrix}
0 & x & u \\
0 & 0 & y \\
0 & 0 & 0
\end{pmatrix}
= \begin{pmatrix}
1 & x & u + \frac{1}{2} xy \\ 0 & 1 & y \\ 0 & 0 & 1
\end{pmatrix}\text{.}
\end{equation}
This expression for the exponential will be important for our construction. Note that the exponential map is both injective and surjective.

We now set:
\begin{equation*}
P = \begin{pmatrix} 0 & 1 & 0 \\ 0 & 0 & 0 \\ 0 & 0 & 0 \end{pmatrix}\text{, }
Q = \begin{pmatrix} 0 & 0 & 0 \\ 0 & 0 & 1 \\ 0 & 0 & 0 \end{pmatrix}\text{ and }
T = \begin{pmatrix} 0 & 0 & 1 \\ 0 & 0 & 0 \\ 0 & 0 & 0 \end{pmatrix}\text{. }
\end{equation*}
We easily check that $[P,Q] = T = -[Q,P]$ while other other commutators between $P$, $Q$ and $T$ are null, and $\mathrm{span}_\C \{P,Q,T\} = \mathds{h}$. 

We note that in particular, $T$ is central, and thus the relations defining $\mathds{h}$ from the basis $\{P,Q,T\}$ are the structural equations of quantum mechanics --- the canonical commutation relation, as proposed by Heisenberg, in order to express the uncertainty principle between two conjugate observables. We refer to \cite{Folland89} for a detailed analysis of the Heisenberg group and its connections to the Moyal product, pseudo-differential calculus, and more fascinating topics.

Thus the study of the irreducible representations of $\HeisenbergGroup$ provide the irreducible representations of the canonical commutation relations. We first note that:
\begin{equation*}\
\bigslant{\HeisenbergGroup}{\{(0,0,u) : u \in \R\}} = \R^2
\end{equation*}
is Abelian, and thus we get a collection of trivial, one-dimensional representations of $\HeisenbergGroup$ by simply lifting the irreducible representations of $\R^2$.

If we set, for any $\eth\in\R\setminus\{0\}$ and $\xi \in L^2(\R)$:
\begin{equation}\label{alpha-action-eq}
\alpha_{\eth,1}^{x,y,u} \xi : s\in \R \mapsto \exp(2i\pi(\eth u + s x))\xi(s + \eth y)
\end{equation}
then we define a unitary representation of $\HeisenbergGroup$, and any nontrivial irreducible unitary representations of the Heisenberg group is unitarily equivalent to $\alpha_{\eth,1}$ for some $\eth\not=0$ \cite{Folland89}. We note that they all are infinite dimensional (the other, trivial, unitary representations of $\HeisenbergGroup$ are one-dimensional).

Let $\eth \in \R\setminus\{0\}$. For all $(x,y) \in \R^2$ and for all $\xi \in L^2(\R)$, set:
\begin{equation*}
\begin{split}
\sigma_{\eth,1}^{x,y}\xi &= \alpha_{\eth,1}^{\exp_{\mathds{H}_3}(xP + yQ)} \xi\\
&= \alpha_{\eth,1}^{x,y,\frac{x y}{2}}\xi : s \in \R \mapsto \exp(i\pi\eth x y + 2i\pi s x)\xi(s + \eth y)  \text{.}
\end{split}
\end{equation*}

The map $\sigma_{\eth,1}^{x,y}$ is a unitary on $L^2(\R)$ for all $(x,y) \in \R^2$. Moreover, for all $(x_1,y_1)$, $(x_2,y_2) \in \R^2$, we note that:
\begin{equation*}
\sigma_{\eth,1}^{x_1,y_1}\sigma_{\eth,1}^{x_2,y_2} = \cocycle{\eth}((x_1,y_1),(x_2,y_2)) \sigma_{\eth,1}^{x_1 + x_2, y_1 + y_2} \text{,}
\end{equation*}
i.e. $\sigma_{\eth,1}$ is a projective representation of $\R^2$ on $L^2(\R)$ for the bicharacter $\cocycle{\eth}$, namely the \emph{Schr{\"o}dinger representation} of ``Plank constant'' $\eth$. Moreover, every nontrivial irreducible unitary projective representation of $\R^2$ is unitarily equivalent to one of $\sigma_{1,\eth}$ for some $\eth \not= 0$ (by nontrivial, we mean associated with a nontrivial cocycle).

We introduce one more notation which will prove very useful in defining our D-norm on Heisenberg modules. If $d\in\N$ with $d>0$, we define the following unitarry operators on $L^2(\R)\otimes\C^d$:
\begin{equation*}
\alpha_{\eth,d}^{x,y,u} = \alpha_{\eth,1}^{x,y,u}\otimes\mathrm{id}\text{ and }\sigma_{\eth,d}^{x,y} = \sigma_{\eth,1}^{x,y}\otimes \mathrm{id}
\end{equation*} 
for all $x,y,u\in\R$, where $\mathrm{id}$ is the identity map on $\C^d$. We trivially check that $\alpha_{\eth,d}$ is a unitary representation of $\HeisenbergGroup$ on $L^2(\R)\otimes\C^d$, while $\sigma_{\eth,d}$ is a $\cocycle{\eth}$-projective representation of $\R^2$ on $L^2(\R)\otimes\C^d$. Moreover, we also check immediately that $\alpha_{\eth,d}^{x,y,0} = \sigma_{\eth,d}^{x,y}$ for all $x,y \in \R$. 
\end{notation}

We now turn to the projective representations of $\Z_q^2$, where $q\in\N\setminus\{0\}$. We first note that, for any $p\in\Z$, the skew bicharacter $\cocycle{\frac{p}{q}}$ of $\Z^2$ induces a skew bicharacter of $\Z_q^2$ --- which we keep denoting by $\cocycle{\frac{p}{q}}$. By \cite{Kleppner65}, any multiplier of $\Z_q^2$ is cohomologous to $\cocycle{\frac{p}{q}}$ for some $p\in\N$. 

For our purpose, we will thus get, up to unitary equivalence, every possible finite dimensional unitary projective representations of the groups $\Z_q^2$ for arbitrary $q\in\N\setminus\{0\}$ by considering the following family.

\begin{notation}\label{zq-Heisenberg-rep-notation}
Let $p\in\Z$ and $q\in\N\setminus\{0\}$. Let $n\in\Z\mapsto [n] \in \Z_q$ be the canonical surjection. Let:
\begin{equation*}
u_{p,q} = \begin{pmatrix}
1 &      & & & &\\
  & z & & & &\\
  & & z^2 & & &\\
  & & & \ddots & & \\
  & & & & & z^{q-1}
\end{pmatrix}
\text{ and }
v_{p,q} = \begin{pmatrix}
0 & \hdots &        & & &1 \\
1 & 0      & \hdots & & &  \\
\ddots & \ddots &   & & &  \\
  &        &        & &1& 0\\ 
\end{pmatrix}
\text{,}
\end{equation*}
with $z = \exp\left(\frac{2i\pi p}{q}\right)$. Since $u_{p,q}^q = v_{p,q}^q = \unit_{}$, the map:
\begin{equation*}
  \rho_{p,q,1} : (z,w) \in \Z_q^2 \mapsto \rho_{p,q,1}^{z,w} = \exp\left(\frac{i\pi p n m}{q}\right)u_{p,q}^n v_{p,q}^m \text{ where $[n] = z$ and $[m]=w$}
\end{equation*}
is well-defined. An easy computation shows that $\rho_{p,q,1}$ is a projective representation of $\Z_q^2$. 

For all $d\in q\N$, $d > 0$, we now set:
\begin{equation*}
\rho_{p,q,d}^{n,m} = \rho_{p,q,1}^{n,m}\otimes\mathrm{id}_{\frac{d}{q}}
\end{equation*}
where $\mathrm{id}_{\frac{d}{q}}$ is the identity map on $\C^{\frac{d}{q}}$.

We remark that $\rho_{p,q,d}$ acts on $\C^d$, i.e. we parametrized $\rho$ by the dimension of the space on which it acts rather than the multiplicity of $\rho_{p,q,1}$, as it will make our notations much simpler.

If $p$ and $q$ are relatively prime, the representation $\rho_{p,q,1}$ is irreducible, with range the entire algebra of $q\times q$ matrices --- it is in fact, the only irreducible $\cocycle{\frac{p}{q}}$-projective representation of $\Z_q^2$ up to unitary equivalence. Thus in general, any finite dimensional unitary representation of $\Z_q^2$ is unitarily equivalent to some $\rho_{l,r,d}$ for some $l \in \Z$, $r\in\N\setminus\{0\}$, $d\in r\N\setminus\{0\}$, with $l = 0$ and $r = 1$ or $l, r$ relatively prime.
\end{notation}

In order to construct the inner product on the Heisenberg modules, we shall need to first work on a space of well-behaved functions inside the Hilbert space $\ell^2(\Z^2)$ on which quantum tori will act. This space will consist of the Schwarz functions.

\begin{definition}
Let $E$ be a finite dimensional vector space. A function $f : \R \rightarrow E$ is a \emph{$E$-valued Schwarz function over $\R$} when it is infinitely differentiable on $\R$ and, for all $j \in \N$ and all polynomial $p \in \R[X]$, we have:
\begin{equation*}
\lim_{t\rightarrow\pm\infty} \left\| p(t)f^{(j)}(t) \right\|_E = 0 \text{.}
\end{equation*}

The space of all $E$-valued Schwarz functions over $\R$ is denoted by $\mathcal{S}(E)$.
\end{definition}

We note that if $f \in \mathcal{S}(E)$ for some finite dimensional space $E$, then in particular, $f \in L^p(\R)$ for all $p \in [1,\infty]$, since for any $j\in\N$, there exists $M > 0$ such that $\|f(s)\|_E \leq\frac{M}{1 + |s|^j}$ for all $s\in\R$.

We now implement the scheme which we described a few paragraphs above to construct modules over quantum tori. We refer to the mentioned works of Connes and Rieffel for the details and justification behind the following construction.

\begin{theorem-definition}[{ \cite{Connes80}, \cite{Rieffel83}, \cite{ConnesRieffel87}  }]\label{HeisenbergMod-thm}
Let $\theta \in \R$ and $q\in\N\setminus\{0\}$. Let $p \in \Z$, $q \in \N\setminus\{0\}$ , and let $d \in q\N\setminus\{0\}$. The \emph{Heisenberg module} $\HeisenbergMod{\theta}{p,q,d}$ is the module over $\qt{\theta}$ defined as follows.

Let $\rho_{p,q,d}$ be the projective action of $\Z_q^2$ with cocycle $\cocycle{\frac{p}{q}}$, consisting of the sum of $\frac{d}{q}$ copies of the unique, up to unitary equivalence, irreducible representation with the same cocycle. Up to unitary conjugation, we assume that $\rho_{p,q,d}$ acts on $\C^d$.

Let:
\begin{equation*}
\eth = \theta - \frac{p}{q}\text{.}
\end{equation*}

Let $\alpha_{\eth,1}$ be the action of the Heisenberg group $\HeisenbergGroup$ on $L^2(\R)$ given by Expression (\ref{alpha-action-eq}).

For $(n,m) \in \Z^2$, denoting the class of $n$ and $m$ in $\bigslant{\Z}{q\Z}$, respectively, by $[n]$ and $[m]$, we set:
\begin{equation*}
\varpi_{p,q,\eth,d}^{n,m} = \sigma_{\eth,1}^{n,m}\otimes\rho_{p,q,d}^{[n],[m]} \text{.}
\end{equation*}

For all $n,m\in\Z$, the map $\varpi_{p,q,\eth,d}^{n,m}$ is a unitary of $L^2(\R) \otimes \C^d$, and moreover $\varpi_{p,q,\eth,d}$ is an $\cocycle{\theta}$-projective representation of $\Z^2$.

By universality, the Hilbert space $L^2(\R)\otimes \C^d$ is a module over $\qt{\theta}$, with, in particular, for all $f\in \ell^1(\Z^2)$ and $\xi \in L^2(\R,\C^d) = L^2(\R)\otimes \C^d$:
\begin{equation*}
f \xi = \sum_{n,m\in\Z} f(n,m) \varpi_{p,q,\eth,d}^{n,m}\xi\text{.}
\end{equation*}

Let $\module{S}_\theta^{p,q,d} = \mathcal{S}(\C^d) \subseteq L^2(\R)\otimes \C^d$. For all $\xi,\omega \in \module{S}_\theta^{p,q,d}$, define $\inner{\xi}{\omega}{\HeisenbergMod{\theta}{p,q,d}}$ as the function in $\ell^1(\Z^2)$ given by:
\begin{equation*}
\inner{\xi}{\omega}{\HeisenbergMod{\theta}{p,q,d}} : (n,m) \in \Z^2 \longmapsto \inner{\varpi_{p,q,\eth,d}^{n,m}\xi}{\omega}{L^2(\R)\otimes E} \text{.}
\end{equation*}

The \emph{Heisenberg module $\HeisenbergMod{\theta}{p,q,d}$} is the completion of $\module{S}_\theta^{p,q,d}$ for the norm associated with the $\qt{\theta}$-inner product $\inner{\cdot}{\cdot}{\HeisenbergMod{\theta}{p,q,d}}$.
\end{theorem-definition}

We note that $\module{S}_\theta^{p,q,d}$ is not closed under the action of $\qt{\theta}$ but it is closed under the action of the subalgebra:
\begin{equation*}
\{ f \in \ell^1(\Z^2) : \forall p\in \R[X,Y] \quad \lim_{n,m\rightarrow\pm\infty} p(n,m)f(n,m) = 0 \}
\end{equation*}
of $(\ell^1(\Z^2),\conv{\theta},\cdot^\ast)$, often referred to as the smooth quantum torus. We will not use this observation later on, though it is notable that the completion of $\module{S}_\theta^{p,q,d}$ is indeed a $\qt{\theta}$-module.

\section{A continuous fields of $C^\ast$-Hilbert norms}

All Heisenberg modules are completions of $\mathcal{S}(\C^d)$ for some $d\in \N$, $d>0$. For a fixed $d$, it thus becomes possible to ask whether the various $C^\ast$-Hilbert norms $\|\cdot\|_{\HeisenbergMod{\theta}{p,q,d}}$, as $\theta$ varies in $\R$, form a continuous family. 

To this end, we establish a succession of lemmas whose primary goal is to provide us with estimates on the Heisenberg modules' $C^\ast$-Hilbert norms in terms of the norm of $\ell^1(\Z^2)$. While the Heisenberg modules' $C^\ast$-Hilbert norms are in general delicate to work with as they involve the no-less abstract quantum tori norms, the $\ell^1(\Z^2)$ norm, which dominates all of the quantum tori norms, is much more amenable to computations. For our purpose, we will take full advantage of the regularity of Schwarz functions, which will enable us to apply various analytic tools to derive our desired result.

The first step is a lemma which provides a first upper bound to the $\ell^1(\Z^2)$ norm of the difference between certain Heisenberg module inner products. 

\begin{lemma}\label{ip-lemma}
If $\theta,\vartheta\in\R$ and $p\in\Z$, $q \in \N\setminus\{0\}$, $d\in q\N\setminus\{0\}$, and if $\omega$, $\eta$ and $\xi$ are $C^2$ functions from $\R$ to $\C^d$ such that for all $f \in \{\omega,\eta,\xi\}$:
\begin{enumerate}
\item all of $f$, $f'$ and $f''$ are integrable on $\R$,
\item $\lim_{t\rightarrow\pm\infty} f(t) = \lim_{t\rightarrow\pm\infty} f'(t) = \lim_{t\rightarrow\pm\infty} f''(t) = 0$,
\end{enumerate}
then, writing $\eth_\theta = \theta - \frac{p}{q}$ and $\eth_\vartheta = \vartheta - \frac{p}{q}$, we have:
\begin{multline*}
\left\| \inner{\omega}{\eta}{\HeisenbergMod{\theta}{p,q,d}} - \inner{\xi}{\eta}{\HeisenbergMod{\vartheta}{p,q,d}} \right\|_{\ell^1(\Z^2)} \\
\leq \sum_{n\in\Z}\frac{1}{4\pi^2 n^2} \left( \int_\R \sum_{m\in\Z} \left\|\omega''(t + \eth_\theta m) - \xi''(t + \eth_\vartheta m)\right\|_{\C^d} \|\eta(t)\|_{\C^d} \, dt + \right. \\
+ 2\int_\R \sum_{m\in\Z} \left\|\omega'(t + \eth_\theta m)-\xi'(t + \eth_\vartheta m)\right\|_{\C^d} \|\eta'(t)\|_{\C^d} \, dt \\
+ \left. \int_\R \sum_{m\in\Z} \left\|\omega(t + \eth_\theta m) - \xi(t + \eth_\vartheta m)\right\|_{\C^d} \|\eta''(t)\|_{\C^d} \, dt \right) \text{.}
\end{multline*}
\end{lemma}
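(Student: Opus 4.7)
The plan is to obtain the $\frac{1}{4\pi^2 n^2}$ decay in $n$ by integrating by parts twice in the oscillatory integral defining the matrix element of the Heisenberg module inner product, and then to bound the resulting integrand termwise via Cauchy--Schwarz in $\C^d$. Unpacking the definitions from Theorem-Definition \ref{HeisenbergMod-thm}, for any $(n,m)\in\Z^2$ we have
$$\inner{\omega}{\eta}{\HeisenbergMod{\theta}{p,q,d}}(n,m) = e^{i\pi\eth_\theta nm}\int_\R e^{2i\pi sn}\inner{\rho_{p,q,d}^{[n],[m]}\omega(s+\eth_\theta m)}{\eta(s)}{\C^d}\,ds,$$
and analogously for $\xi$ and $\eth_\vartheta$.

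First I would fix $n\in\Z\setminus\{0\}$ and integrate by parts twice in $s$ to move two derivatives off the exponential; the hypothesis that $\omega,\eta,\xi$ and their first derivatives vanish at $\pm\infty$, together with integrability of the second derivatives, makes the boundary terms vanish and produces the factor $-\frac{1}{4\pi^2 n^2}$. A Leibniz expansion then decomposes $\partial_s^2\inner{\rho_{p,q,d}^{[n],[m]}\omega(s+\eth_\theta m)}{\eta(s)}{\C^d}$ into three terms, with coefficients $1,2,1$, pairing $\omega^{(j)}$ with $\eta^{(2-j)}$ for $j=0,1,2$; likewise for the $\xi,\vartheta$ expression.

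Next I would subtract the $\vartheta$-expression from the $\theta$-expression termwise and apply Cauchy--Schwarz in $\C^d$, absorbing the unimodular phase $e^{i\pi\eth nm}$ together with the unitary $\rho_{p,q,d}^{[n],[m]}$ into the first argument of the $\C^d$-inner product, since these preserve $\C^d$-norms pointwise. This produces, for each $j=0,1,2$, a pointwise bound of the form $\|\omega^{(j)}(s+\eth_\theta m) - \xi^{(j)}(s+\eth_\vartheta m)\|_{\C^d}\|\eta^{(2-j)}(s)\|_{\C^d}$. Finally, I would sum $|F_\theta(n,m) - F_\vartheta(n,m)|$ over $(n,m)\in\Z^2$ and interchange summation and integration via Tonelli's theorem (legal since all integrands are non-negative), arriving at the announced estimate; the sum over $n$ converges thanks to the $\frac{1}{n^2}$ factor.

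The main obstacle I anticipate is the careful handling of the two distinct phases $e^{i\pi\eth_\theta nm}$ and $e^{i\pi\eth_\vartheta nm}$, which disagree once $\theta \neq \vartheta$: the clean comparison in terms of $\|\omega^{(j)}(s+\eth_\theta m) - \xi^{(j)}(s+\eth_\vartheta m)\|_{\C^d}$ requires these phases to be absorbed into the unitary factor before applying Cauchy--Schwarz, so that only the $\C^d$-norm of each shifted vector survives. A secondary technical point is the $n=0$ stratum: it carries no oscillatory factor, and the summand $\frac{1}{4\pi^2 n^2}$ is undefined there, so the outer sum in the statement must be read as restricted to $\Z\setminus\{0\}$ (alternatively, the $n=0$ contribution is bounded directly and the statement applied to $n\ne 0$).
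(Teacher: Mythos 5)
Your outline reproduces the paper's proof essentially step for step: two integrations by parts in $t$ to extract the factor $\frac{1}{4\pi^2 n^2}$ (with boundary terms vanishing by the decay hypotheses), the Leibniz expansion of the second derivative of $t\mapsto\inner{\rho_{p,q,d}^{[n],[m]}\left(\omega(t+\eth_\theta m)-\xi(t+\eth_\vartheta m)\right)}{\eta(t)}{\C^d}$ into three terms with coefficients $1,2,1$, Cauchy--Schwarz combined with the unitarity of $\rho_{p,q,d}^{[n],[m]}$, and finally Tonelli's theorem to rearrange sums and integrals. Your reading of the $n=0$ stratum also matches what the paper implicitly does.

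The one place where your argument does not go through as written is the treatment of the phases, which you yourself flag as the main obstacle but then dispose of too quickly. With the normalization $\sigma_{\eth,1}^{n,m}=\alpha_{\eth,1}^{n,m,\frac{nm}{2}}$ of Theorem-Definition (\ref{HeisenbergMod-thm}), the $\theta$- and $\vartheta$-matrix coefficients carry the \emph{distinct} unimodular factors $e^{i\pi\eth_\theta nm}$ and $e^{i\pi\eth_\vartheta nm}$, and these cannot be ``absorbed into the unitary'': for distinct unimodular scalars one only has $\left\|\lambda v-\mu w\right\|_{\C^d}\leq\left\|v-w\right\|_{\C^d}+|\lambda-\mu|\left\|w\right\|_{\C^d}$, and the discrepancy $\left|e^{i\pi\eth_\theta nm}-e^{i\pi\eth_\vartheta nm}\right|$ grows with $|nm|$ and is not controlled by the right-hand side of the lemma. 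The paper's own proof sidesteps this entirely by writing the coefficient in the phase-free form $\inner{\omega}{\eta}{\HeisenbergMod{\theta}{p,q,d}}(n,m)=\int_\R\inner{\rho_{p,q,d}^{[n],[m]}\omega(t+\eth_\theta m)}{\eta(t)}{\C^d}\,e^{2i\pi nt}\,dt$, so that the difference of the two coefficients collapses to a single integral against $\omega(t+\eth_\theta m)-\xi(t+\eth_\vartheta m)$ and Cauchy--Schwarz applies cleanly. So either one adopts that phase-free normalization of the matrix coefficients (in which case your proof is complete and coincides with the paper's), or the symmetric phase is genuinely present and an additional estimate is required; your proposal names the obstacle but does not supply that estimate.
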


\begin{proof}
We begin with the observation that for all $(n,m) \in \Z^2$ we have:
\begin{multline*}
\inner{\omega}{\eta}{\HeisenbergMod{\theta}{p,q,d}}(n,m) - \inner{\xi}{\eta}{\HeisenbergMod{\vartheta}{p,q,d}}(n,m) \\
\begin{split}
&= \int_\R \inner{\rho_{p,q,d}^{[n],[m]} \omega(t + \eth_\theta m)}{\eta(t)}{\C^d} \exp(2i\pi n t)\,dt \\
&\quad -  \int_\R \inner{\rho_{p,q,d}^{[n],[m]} \xi(t + \eth_\vartheta m)}{\eta(t)}{\C^d} \exp(2i\pi n t)\,dt \\
% &= \int_\R \left(\inner{\rho_{p,q,d}^{[n],[m]} \omega(t + \eth_\theta m)}{\eta(t)}{\C^d} - \inner{\rho_{p,q,d}^{[n],[m]}\xi(t + \eth_\vartheta m)}{\eta(t)}{\C^d}\right) \exp(2i\pi n t) \,dt \\
&= \int_\R \inner{\rho_{p,q,d}^{[n],[m]} \left(\omega(t + \eth_\theta m)-\xi(t + \eth_\vartheta m)\right)}{\eta(t)}{\C^d} \exp(2i\pi n t) \,dt \text{.}
\end{split}
\end{multline*}

For all $n,m\in\Z$, the function:
\begin{equation*}
f_{n,m} : t \mapsto \inner{\rho_{p,q,d}^{[n],[m]} \omega(t + \eth_\theta m) - \xi(t + \eth_\vartheta m)}{\eta(t)}{\C^d}
\end{equation*}
has a first and continuous second derivative which are integrable, and:
\begin{equation*}
\lim_{t\rightarrow\pm\infty} f_{n,m}(t) = \lim_{t\rightarrow\pm\infty} f_{n,m}'(t) = \lim_{t\rightarrow\pm\infty} f_{n,m}''(t) = 0\text{.}
\end{equation*}
We consequently may apply integration by part and obtain, for all $m,n \in \Z$:
\begin{multline*}
\int_\R \inner{\rho_{p,q,d}^{[n],[m]} \omega(t + \eth_\theta m) - \xi(t + \eth_\vartheta m)}{\eta(t)}{\C^d} \exp(2i\pi n t) \, dt \\
\begin{split}
&= \int_\R f_{n,m}(t) \exp(2i\pi n t) \,dt = -\int_\R f_{n,m}'(t) \frac{\exp(2i\pi n t)}{2i\pi n} \, dt \\
&= \int_\R f_{n,m}''(t) \frac{\exp(2i\pi n t)}{4\pi^2 n^2} \, dt\text{.}
\end{split}
\end{multline*}

We compute trivially that for all $t\in\R$ and $m,n\in\Z$:
\begin{multline*}
f_{n,m}''(t) = \inner{\rho_{p,q,d}^{[n],[m]} \left(\omega''(t + \eth_\theta m) - \xi''(t + \eth_\vartheta m)\right)}{\eta(t)}{\C^d}\\
+ 2\inner{\rho_{p,q,d}^{[n],[m]} \left(\omega'(t + \eth_\theta m)-\xi'(t + \eth_\vartheta m)\right)}{\eta'(t)}{\C^d} \\ + \inner{\rho_{p,q,d}^{[n],[m]} \left(\omega(t + \eth_\theta m)-\xi(t + \eth_\vartheta m)\right)}{\eta''(t)}{\C^d} \text{.}
\end{multline*}

Thus using Cauchy-Schwarz and since $\rho_{p,q,d}^{[n],[m]}$ is a unitary, we conclude:
\begin{multline*} 
\left\| \inner{\omega}{\eta}{\HeisenbergMod{\theta}{p,q,d}} - \inner{\xi}{\eta}{\HeisenbergMod{\vartheta}{p,q,d}}\right\|_{\ell^1(\Z^2)} \\
\begin{split}
&= \sum_{n,m\in\Z} \left| \int_\R \inner{\rho_{p,q,d}^{[n],[m]} \left(\omega(t + \eth_\theta m) - \xi(t + \eth_\vartheta m)\right)}{\eta(t)}{\C^d}\exp(2i\pi n t) \, dt \right|\\
&\leq \sum_{m,n\in\Z} \int_\R \frac{\left| f_{n,m}''(t) \right| }{4\pi^2 n^2} \,dt \\
&\leq \sum_{m,n\in\Z} \frac{1}{4 \pi^2 n^2}\left(\int_\R \|\omega''(t + \eth_\theta m)-\xi''(t + \eth_\vartheta m)\|_{\C^d}\|\eta(t)\|_{\C^d} \,dt \right. \\
&\quad + 2\int_\R \|\omega'(t + \eth_\theta m)-\xi'(t + \eth_\vartheta m)\|_{\C^d}\|\eta'(t)\|_{\C^d} \,dt \\
&\quad \left. + \int_\R \|\omega(t + \eth_\theta m)-\xi(t + \eth_\vartheta m)\|_{\C^d}\|\eta''(t)\|_{\C^d} \,dt\right)\\
&= \sum_{n\in\N}\frac{1}{4\pi^2 n^2} \left[ \int_\R \left(\sum_{m\in\N}\|\omega''(t + \eth_\theta m)-\xi''(t + \eth_\vartheta m)\|_{\C^d}\right) \|\eta(t)\|_{\C^d} \,dt \right.\\
&\quad + 2\int_\R \left(\sum_{m\in\N}\|\omega'(t + \eth_\theta m)-\xi'(t + \eth_\vartheta m)\|_{\C^d}\right) \|\eta'(t)\|_{\C^d} \,dt\\
&\quad \left. + \int_\R \left(\sum_{m\in\N}\|\omega(t + \eth_\theta m)-\xi(t + \eth_\vartheta m)\|_{\C^d}\right) \|\eta''(t)\|_{\C^d} \,dt \right] \text{ by Tonelli's theorem.}
\end{split}
\end{multline*}
This concludes our lemma.
\end{proof}

Our next lemma focuses on the type of estimates given in Lemma (\ref{ip-lemma}), and gives a sufficient condition for these upper bounds to converge to $0$ when various parameters are allowed to converge to appropriate values. 

\begin{lemma}\label{ldom-lemma}
Let $d\in \N$, $d > 0$. Let $\Nbar = \N\cup\{\infty\}$ be the one point compactification of $\N$.

If $(\omega_k)_{k \in \Nbar}$ and $(\eta_k)_{k\in\Nbar}$ are two families of $C^2$-functions from $\R$ to $\C^d$ and $(\eth_k)_{k\in\N}$ is a sequence of nonzero real numbers converging to some $\eth_\infty \not= 0$ such that:
\begin{enumerate}
\item $(t,k) \in \R \times \Nbar \mapsto \omega_k(t)$ and $(t,k)\in\R\times\Nbar \mapsto \eta_k(t)$ are jointly continuous,
\item there exists $M > 0$ such that for all $k \in \Nbar$ and $t\in \R$:
\begin{equation*}
\max\left\{ \|\omega_k(t)\|_{\C^d}, \|\eta_k(t)\|_{\C^d} \right\} \leq \frac{M}{1 + t^2} \text{,}
\end{equation*}
\end{enumerate}
then:
\begin{equation}\label{limit-eq}
\lim_{k\rightarrow\infty} \sum_{n\in\N} \frac{1}{4\pi^2 n^2} \int_\R \sum_{m\in\Z}\|\omega_k(t + \eth_k m) - \omega_\infty(t + \eth_\infty m)\|_{\C^d} \|\eta_k(t)\|_{\C^d} \, dt = 0\text{.}
\end{equation}
\end{lemma}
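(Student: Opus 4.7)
The strategy is to reduce the statement to a double application of Lebesgue's dominated convergence theorem on $\R\times\Z$. Since $\sum_{n\geq 1}\frac{1}{4\pi^2 n^2}$ is a finite constant, Equation (\ref{limit-eq}) is equivalent to
$$\lim_{k\to\infty}\sum_{m\in\Z}\int_\R F_k(t,m)\,dt = 0,$$
where
$$F_k(t,m) = \bigl\|\omega_k(t+\eth_k m) - \omega_\infty(t+\eth_\infty m)\bigr\|_{\C^d}\cdot\|\eta_k(t)\|_{\C^d}.$$
The plan is to first send $k\to\infty$ inside the $t$-integral for each fixed $m$, and then inside the $m$-sum.

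Pointwise convergence $F_k(t,m)\to 0$ for every $(t,m)\in\R\times\Z$ is immediate from the hypotheses: the joint continuity of $(s,k)\mapsto\omega_k(s)$ on $\R\times\Nbar$, combined with $t+\eth_k m\to t+\eth_\infty m$ and $k\to\infty$ in the compactification $\Nbar$, yields $\omega_k(t+\eth_k m)\to \omega_\infty(t+\eth_\infty m)$, while $\|\eta_k(t)\|_{\C^d}$ stays bounded by $M/(1+t^2)$. For the inner $t$-integration at fixed $m$, the crude but $k$-uniform bound
$$F_k(t,m) \leq \bigl(\|\omega_k(t+\eth_k m)\|_{\C^d} + \|\omega_\infty(t+\eth_\infty m)\|_{\C^d}\bigr)\,\|\eta_k(t)\|_{\C^d} \leq \frac{2M^2}{1+t^2}$$
supplies an integrable majorant, and dominated convergence gives $\int_\R F_k(t,m)\,dt\to 0$ for each $m\in\Z$.

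The main obstacle is producing a majorant that is simultaneously summable in $m$ and uniform in $k$, which is required for the outer passage to the limit. I would use the classical Cauchy-density convolution identity
$$\int_\R \frac{du}{\bigl(1+(c-u)^2\bigr)(1+u^2)} = \frac{2\pi}{4+c^2}\qquad (c\in\R),$$
applied with $c=-\eth_k m$ and $c=-\eth_\infty m$, to get
$$\int_\R F_k(t,m)\,dt \leq 2\pi M^2\left(\frac{1}{4+(\eth_k m)^2} + \frac{1}{4+(\eth_\infty m)^2}\right).$$
Since $\eth_k\to\eth_\infty\neq 0$ and no $\eth_k$ vanishes, there exist $K\in\N$ and $\eth_\ast>0$ with $|\eth_k|\geq\eth_\ast$ for every $k\geq K$; for such $k$ the right-hand side is dominated by $\frac{4\pi M^2}{4+\eth_\ast^2 m^2}$, which is summable over $m\in\Z$. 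Dominated convergence on $\Z$ then delivers $\sum_{m\in\Z}\int_\R F_k(t,m)\,dt\to 0$, and multiplying by the finite constant $\sum_{n\geq 1}\frac{1}{4\pi^2 n^2}$ yields Equation (\ref{limit-eq}). Everything except the Cauchy-convolution estimate is routine; that identity is the linchpin that forces the $m$-tail to be uniformly controlled.
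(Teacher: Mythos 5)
Your proof is correct, and it takes a genuinely different route from the paper's at the key step. Both arguments begin the same way (pointwise convergence of $\|\omega_k(t+\eth_k m)-\omega_\infty(t+\eth_\infty m)\|$ from the joint continuity on $\R\times\Nbar$, and the observation that the $n$-sum only contributes a constant factor --- indeed you streamline this last point, whereas the paper runs a third, strictly unnecessary, application of dominated convergence over $n$). The divergence is in how the $m$-tail is controlled uniformly in $k$. The paper keeps the order $\int_\R\sum_m$ and constructs an explicit majorant $\mathrm{b}(t,m)$, then uses the $\eth$-periodicity of $t\mapsto\sum_m\|\omega_k(t+\eth m)\|_{\C^d}$ together with uniform convergence of that series on the compact set $[0,\eth_+]\times\Nbar\times[\eth_-,\eth_+]$ to produce a uniform bound $C$, which it then multiplies against $\|\eta_k(t)\|_{\C^d}\leq M/(1+t^2)$ to get an integrable dominating function in $t$. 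You instead swap to $\sum_m\int_\R$ by Tonelli and evaluate each $t$-integral against the product of two Cauchy-type envelopes via the convolution identity $\int_\R \bigl(1+(c-u)^2\bigr)^{-1}(1+u^2)^{-1}\,du = 2\pi/(4+c^2)$, yielding a bound $O\bigl(1/(4+\eth_\ast^2 m^2)\bigr)$ that is summable in $m$ and uniform in $k$ once $|\eth_k|\geq\eth_\ast$. Your version is more explicit and shorter, and it sidesteps both the case analysis in the definition of $\mathrm{b}$ and the periodicity/compactness argument. The one thing the paper's route buys that yours does not is the auxiliary function $\mathrm{b}$ and the uniform bound on $\sum_m\|\omega_k(t+\eth m)\|_{\C^d}$ themselves, which are reused later (the proof of Proposition (\ref{strongly-continuous-prop}) cites Expression (\ref{b-eq}) directly); as a self-contained proof of this lemma, however, your argument is complete.
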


\begin{proof}
First, we observe that Expression (\ref{limit-eq}) is left unchanged if we replace $\eth_k$ with $-\eth_k$ for all $k\in\Nbar$, thanks to the summation over $m\in\Z$. Consequently, we may assume without loss of generality that $\eth_\infty > 0$ and assume that $\eth_k > 0$ for all $k\in\Nbar$ (since $(\eth_k)_{k\in\N}$ converges to $\eth_\infty\not= 0$, we must have that $\eth_k$ and $\eth_{\infty}$ have the same sign for $k$ larger than some $K \in \N$; we thus can truncate our sequence to start at $K$ and flip all the signs if necessary to work with positive values). 

With this in mind, since $(\eth_k)_{k\in\N}$ is positive and converges to $\eth_\infty > 0$, there exists $0 < \eth_- < \eth_+$ such that for all $k \in \Nbar$, we have $\eth_k \in [\eth_-,\eth_+]$.

We shall employ the Lebesgue dominated convergence theorem. To this end, we introduce the following function to serve as our upper bound. For all $t,m\in\R$ we set:

% \begin{align*}
% (t+\delta m)^2 - (t+K m)^2 &= (t + \delta m - t - K m)(t + \delta m + t + K m) \\
% &= (\delta - K)m \cdot (2t + (\delta + K)m)
% \end{align*}

\begin{equation}\label{b-eq}
\mathrm{b}(t,m) = \begin{cases}
\frac{M}{1 + (t + m \eth_-)^2} \text{ if $m > 0$ and $t\geq - \eth_- m$, or if $m < 0$ and $t\leq -\eth_- m$,}\\ 
% \frac{M}{1 + (t + m \eth_+)^2} \text{ if $m > 0$ and $t \leq -{\eth_+} m$, or if $m < 0$ and $t \geq \eth_+ m$,}\\
% M \text{ if $m  = 0$ or $\frac{-t}{m} \in \left(\eth_-,\eth_+\right)$.}
M \text{ otherwise.}
\end{cases}
\end{equation}

For a fixed $t \in \R$, we note that:
\begin{equation*}
\mathrm{b}(t,m) \sim_{m\rightarrow \pm \infty} \frac{M}{\eth_-^2 m^2}\text{,}
\end{equation*}
so $\sum_{m\in\Z} \mathrm{b}(t,m) < \infty$. Moreover, by construction, for all $t,m\in \R$ and $\eth\in [\eth_-, \eth_+]$, we have:
\begin{equation*}
\frac{M}{1 + (t + \eth m)^2} \leq \mathrm{b}(t,m)\text{.}
\end{equation*}

Therefore, using our hypothesis, for all $t\in\R$, $m\in\Z$, $k\in\Nbar$ and $\eth \in [\eth_-, \eth_+]$:
\begin{equation*}
\begin{split}
\left\|\omega_k(t + m\eth)-\omega_\infty(t + m\eth_\infty)\right\|_{\C^d} &\leq \frac{M}{1 + (t + m\eth)^2} + \frac{M}{1 + (t + m \eth_\infty)^2} \\
&\leq 2\mathrm{b}(t,m) \text{.}
\end{split}
\end{equation*}

Thus for a fixed $t\in \R$, we may apply Lebesgue dominated convergence theorem to conclude:
\begin{equation}\label{ldom-eq1}
\lim_{k\rightarrow\infty}\sum_{m\in\Z}\left\|\omega_k(t + m \eth_k) - \omega_\infty(t + m \eth_\infty)\right\|_{\C^d} = 0\text{,}
\end{equation}
since $(t,k)\in\R\times\Nbar \mapsto \omega_k(t)$ is jointly continuous.

We now make another observation. For any fixed $\eth > 0$ and $k\in\Nbar$, The function:
\begin{equation*}
t\in\R \mapsto \sum_{m\in\Z} \|\omega_k(t + \eth m)\|_{\C^d}
\end{equation*}
is $\eth$-periodic.

If $t \in [0,\eth_+]$, $k\in\Nbar$ and $\eth \in [\eth_-,\eth_+]$, then since:
\begin{equation*}
\|\omega_k(t + \eth m)\|_{\C^d} \leq \sup_{x\in[0,\eth_+]}\mathrm{b}(x,m)
\end{equation*}
while, as can easily be checked:
\begin{equation*}
\sup_{x\in[0,\eth_+]} \mathrm{b}(x,m) \sim_{m\rightarrow\pm \infty} \frac{M}{\eth_-^2 m^2}\text{,}
\end{equation*}
we conclude that the series:
\begin{equation*}
\left( (t,k,\eth)\in\R\times\Nbar\times[\eth_-,\eth_+] \mapsto \sum\|\omega_k(t + \eth m)\|_{\C^d}\right)_{m\in\Z}
\end{equation*}
converges uniformly to its limit on $[0,\eth_+]\times \Nbar \times [\eth_-,\eth_+]$. In particular:
\begin{equation*}
(t,k,\eth) \in [0,\eth_+]\times\Nbar\times[\eth_-,\eth_+]\mapsto\sum_{m\in\Z}\|\omega_k(t + \eth m)\|_{\C^d}
\end{equation*}
is continuous on a compact domain and so it is bounded. Let $C > 0$ such that for all $(t,k,\eth) \in [0,\eth_+]\times\Nbar\times[\eth_-,\eth_+]$, we have:
\begin{equation*}
\sum_{m\in\Z}\|\omega_k(t + \eth m)\|_{\C^d} \leq C\text{.}
\end{equation*}

We conclude that $t\mapsto \sum_{m\in\Z}\|\omega_k(t-\eth_k m)\|_{\C^d}$ is bounded by $C$ on $\R$, since it is an $\eth_k$-periodic function with $\eth_k\leq \eth_+$, for all $k\in\Nbar$.
 
We thus have that for all $t \in \R$ and $k \in\Nbar$:
\begin{equation}\label{ldom-eq2}
\sum_{m\in\Z} \|\omega_k(t + m \eth_k) - \omega_\infty(t + m \eth_\infty)\|_{\C^d} \|\eta_k(t)\|_{\C^d} \leq 2 C \|\eta_k(t)\|_{\C^d} \leq \frac{2CM}{1 + t^2}\text{.}
\end{equation}
Now $t\in\R\mapsto\frac{2CM}{1 + t^2}$ is integrable over $\R$. Once again, we apply Lebesgue dominated convergence theorem, and we conclude from Expression (\ref{ldom-eq1}) that:
\begin{equation}\label{ldom-eq3}
\lim_{k\rightarrow \infty} \int_\R \sum_{m\in\Z}\|\omega_k(t + m \eth_k) - \omega_\infty(t + m \eth_\infty)\|_{\C^d} \|\eta_k(t)\|_{\C^d} \, dt = 0\text{.}
\end{equation}

Last, using Inequality (\ref{ldom-eq2}) again, we note that for all $k\in\Nbar$:
\begin{equation*}
\int_\R \sum_{m\in\Z}\|\omega_k(t + m \eth_k) - \omega_\infty(t + m \eth_\infty)\|_{\C^d} \|\eta_k(t)\|_{\C^d} \, dt \leq \int_\R \frac{2CM}{1 + t^2} \,dt = 2CM \pi
\end{equation*}
and thus for all $n\in\Z$ and $k\in\Nbar$:
\begin{equation*}
\frac{1}{4\pi^2 n^2} \int_\R \sum_{m\in\Z}\|\omega(t + m \eth_k) - \omega(t + m \eth_\infty)\|_{\C^d} \|\eta(t)\|_{\C^d} \, dt \leq \frac{2CM \pi}{4\pi^2 n^2} = \frac{CM}{2\pi n^2}\text{,} 
\end{equation*}
with $\sum_{n\in\Z} \frac{CM}{2\pi n^2} < \infty$; hence we may apply Lebesgue dominated convergence theorem once more to conclude from Expression (\ref{ldom-eq3}):
\begin{equation*}
\lim_{k\rightarrow\infty} \sum_{n\in\Z} \frac{1}{4\pi^2 n^2} \int_\R \sum_{m\in\Z}\|\omega_k(t + m \eth_k) - \omega_\infty(t + m \eth_\infty)\|_{\C^d} \|\eta_k(t)\|_{\C^d} \, dt = 0 \text{.}
\end{equation*}
This concludes our lemma.
\end{proof}

\begin{remark}\label{l1-rmk}
One may check that Lemma (\ref{ip-lemma}) and Lemma (\ref{ldom-lemma}) together prove that if $p,q \in \N$, $\xi,\omega\in\mathcal{S}(\C^d)$, for any $d \in q\N$ with $d > 0$, and if $\theta\in\R\setminus\left\{\frac{p}{q}\right\}$, then $\inner{\xi}{\omega}{\HeisenbergMod{\theta}{p,q,d}} \in \ell^1(\Z^2)$. It is a well-known fact (indeed a basic fact for the very construction of Heisenberg modules) though maybe not apparent from Theorem-Definition (\ref{HeisenbergMod-thm}) without consulting such sources as \cite{Rieffel83}.
\end{remark}

We now bring together Lemma (\ref{ip-lemma}) and Lemma (\ref{ldom-lemma}) to obtain a first result of continuity on the Heisenberg module inner products, albeit using the $\ell^1(\Z^2)$ norm. This is the core result of this section, and it is phrased at a somewhat higher level of generality that what is needed for the proof of continuity of the family of Heisenberg $C^\ast$-Hilbert norms. Indeed, this level of generality will prove useful twice later in this paper: when proving that the Heisenberg group representations $\alpha_{\eth,d}$ define \emph{strongly continuous} actions on Heisenberg modules, and when establishing that our prospective D-norms on Heisenberg modules will also form a continuous family of norms in \cite{Latremoliere18a}.

\begin{lemma}\label{l1-cont-lemma}
Let $p,q\in\N$ with $q > 0$ and $d\in q\N$ with $d > 0$. If $(\xi_k)_{k\in\Nbar}$ is a family of $\C^d$-valued $C^2$-functions over $\R$ such that:
\begin{enumerate}
\item there exists $M > 0$ such that for all $k\in\Nbar$ and $t\in\R$:
\begin{equation*}
\max\left\{ \|\xi_k(t)\|_{\C^d}, \|\xi_k'(t)\|_{\C^d}, \|\xi_k''(t)\|_{\C^d} \right\} \leq \frac{M}{1 + t^2}\text{,}
\end{equation*}
\item $(t,k) \in \R\times\Nbar \mapsto \xi_k(t)$ is continuous,
\end{enumerate}
and if $(\theta_k)_{k\in\N}$ is a sequence converging to $\theta_\infty$ such that $\theta_k-\frac{p}{q} \not=0$ for all $k\in\Nbar$, then we have:
\begin{equation*}
\lim_{k\rightarrow\infty} \left\| \inner{\xi_k}{\xi_k}{\HeisenbergMod{\theta_k}{p,q,d}} - \inner{\xi_\infty}{\xi_\infty}{\HeisenbergMod{\theta_\infty}{p,q,d}} \right\|_{\ell^1(\Z^2)}  = 0 \text{.}
\end{equation*}
\end{lemma}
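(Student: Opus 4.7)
The plan is to split the difference into two pieces, each controlled by Lemma \ref{ip-lemma}, and then to apply Lemma \ref{ldom-lemma} term by term to each of the three integrals that Lemma \ref{ip-lemma} produces. Using sesquilinearity of the Hilbert-module inner product, I write
\begin{equation*}
\inner{\xi_k}{\xi_k}{\HeisenbergMod{\theta_k}{p,q,d}} - \inner{\xi_\infty}{\xi_\infty}{\HeisenbergMod{\theta_\infty}{p,q,d}} = A_k + B_k,
\end{equation*}
with $A_k := \inner{\xi_k}{\xi_k}{\HeisenbergMod{\theta_k}{p,q,d}} - \inner{\xi_\infty}{\xi_k}{\HeisenbergMod{\theta_\infty}{p,q,d}}$, where the first slot and the parameter change, and $B_k := \inner{\xi_\infty}{\xi_k}{\HeisenbergMod{\theta_\infty}{p,q,d}} - \inner{\xi_\infty}{\xi_\infty}{\HeisenbergMod{\theta_\infty}{p,q,d}}$, where only the second slot changes.

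The first piece $A_k$ is exactly in the form to which Lemma \ref{ip-lemma} applies, with $\omega = \xi_k$, $\xi = \xi_\infty$, $\eta = \xi_k$, $\theta = \theta_k$ and $\vartheta = \theta_\infty$. For $B_k$, Lemma \ref{ip-lemma} is not immediately applicable because the variation is in the second argument; however, $f \mapsto f^\ast$ is an isometry of $\ell^1(\Z^2)$ and the Hilbert-module inner product satisfies $\inner{\omega}{\eta}{\HeisenbergMod{\theta}{p,q,d}}^\ast = \inner{\eta}{\omega}{\HeisenbergMod{\theta}{p,q,d}}$, so taking the involution gives $\|B_k\|_{\ell^1(\Z^2)} = \|\inner{\xi_k - \xi_\infty}{\xi_\infty}{\HeisenbergMod{\theta_\infty}{p,q,d}}\|_{\ell^1(\Z^2)}$, and Lemma \ref{ip-lemma} now applies with $\omega = \xi_k$, $\xi = \xi_\infty$, $\eta = \xi_\infty$, and $\theta = \vartheta = \theta_\infty$. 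Setting $\eth_k := \theta_k - \frac{p}{q}$, the sequence $(\eth_k)$ converges to the nonzero limit $\eth_\infty := \theta_\infty - \frac{p}{q}$, so each of the three integrals on the right-hand side of the bound of Lemma \ref{ip-lemma} falls under Lemma \ref{ldom-lemma}. I apply the latter in turn with $(\omega_k,\eta_k)$ taken to be $(\xi_k'',\xi_k)$, then $(\xi_k',\xi_k')$, then $(\xi_k,\xi_k'')$ for $A_k$, and with $\xi_\infty$, $\xi_\infty'$, $\xi_\infty''$ in place of the $\eta$-slot and $\eth_k = \eth_\infty$ constant for $B_k$. The uniform bound $M/(1+t^2)$ of hypothesis (1) supplies the pointwise dominator, and joint continuity supplies the pointwise convergence; a final triangle inequality yields the conclusion.

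The main step requiring care is the reduction of $B_k$ to a form covered by Lemma \ref{ip-lemma}, as one must verify that the adjoint in $\ell^1(\Z^2)$ indeed swaps the two slots of the Heisenberg inner product — a routine consequence of the unitarity of $\varpi_{p,q,\eth,d}$ and the Hilbert C*-module axioms, but the only point where the argument genuinely uses structure beyond what was recorded in Lemma \ref{ip-lemma}. The only other point of vigilance is that Lemma \ref{ldom-lemma}'s joint continuity hypothesis is invoked for $(\xi_k')$ and $(\xi_k'')$ as well as for $(\xi_k)$, which the hypotheses of the present lemma are evidently meant to guarantee through their $C^2$ data.
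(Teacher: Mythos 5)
Your proposal is correct and takes essentially the same route as the paper: the same telescoping through a mixed middle term, the same application of Lemma (\ref{ip-lemma}) to each piece, the same use of the involutive isometry of $\ell^1(\Z^2)$ together with $\inner{\omega}{\eta}{\HeisenbergMod{\theta}{p,q,d}}^\ast = \inner{\eta}{\omega}{\HeisenbergMod{\theta}{p,q,d}}$ to handle the piece where the ``wrong'' slot varies, and the same term-by-term invocation of Lemma (\ref{ldom-lemma}). The only cosmetic difference is that your middle term is $\inner{\xi_\infty}{\xi_k}{\HeisenbergMod{\theta_\infty}{p,q,d}}$ where the paper uses $\inner{\xi_k}{\xi_\infty}{\HeisenbergMod{\theta_\infty}{p,q,d}}$, which merely shifts the adjoint step from one summand to the other; the joint-continuity caveat you flag for $\xi_k'$ and $\xi_k''$ is equally present in the paper's own argument.
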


\begin{proof}
To fix notations, for all $k\in\Nbar$, we set $\eth_k = \theta_k - \frac{p}{q}$. Note that $(\eth_k)_{k\in\N}$ is a sequence of nonzero real numbers converging to $\eth_\infty \not= 0$.

We shall prove our result from the following inequality:
\begin{multline}\label{l1-cont-eq1}
\left\| \inner{\xi_k}{\xi_k}{\HeisenbergMod{\theta_k}{p,q,d}} - \inner{\xi_\infty}{\xi_\infty}{\HeisenbergMod{\theta_\infty}{p,q,d}} \right\|_{\ell^1(\Z^2)} \\ \leq \left\| \inner{\xi_k}{\xi_k}{\HeisenbergMod{\theta_k}{p,q,d}} - \inner{\xi_k}{\xi_\infty}{\HeisenbergMod{\theta_\infty}{p,q,d}} \right\|_{\ell^1(\Z^2)} + \left\| \inner{\xi_k}{\xi_\infty}{\HeisenbergMod{\theta_\infty}{p,q,d}} - \inner{\xi_\infty}{\xi_\infty}{\HeisenbergMod{\theta_\infty}{p,q,d}} \right\|_{\ell^1(\Z^2)} \text{.}
\end{multline}

We begin with the first term of the right hand side of Inequality (\ref{l1-cont-eq1}). We observe that:
\begin{equation*}
\left\| \inner{\xi_k}{\xi_k}{\HeisenbergMod{\theta_k}{p,q,d}} - \inner{\xi_k}{\xi_\infty}{\HeisenbergMod{\theta_\infty}{p,q,d}} \right\|_{\ell^1(\Z^2)} = \left\| \inner{\xi_k}{\xi_k}{\HeisenbergMod{\theta_k}{p,q,d}} - \inner{\xi_\infty}{\xi_k}{\HeisenbergMod{\theta_\infty}{p,q,d}} \right\|_{\ell^1(\Z^2)}\text{.}
\end{equation*}

By Lemma (\ref{ip-lemma}), we then have for all $k\in\N$:
\begin{multline}\label{l1-cont-eq2}
\left\| \inner{\xi_k}{\xi_k}{\HeisenbergMod{\theta_k}{p,q,d}} - \inner{\xi_\infty}{\xi_k}{\HeisenbergMod{\theta_\infty}{p,q,d}} \right\|_{\ell^1(\Z^2)} \\
\leq \sum_{n\in\Z}\frac{1}{4\pi^2 n^2} \left( \int_\R \sum_{m\in\Z} \left\|\xi_k''(t + \eth_k m) - \xi_\infty''(t + \eth_\infty m)\right\|_{\C^d} \|\xi_k(t)\|_{\C^d} \, dt + \right. \\
+ 2\int_\R \sum_{m\in\Z} \left\|\xi_k'(t + \eth_k m)-\xi_\infty'(t + \eth_\infty m)\right\|_{\C^d} \|\xi_k'(t)\|_{\C^d} \, dt \\
+ \left. \int_\R \sum_{m\in\Z} \left\|\xi_k(t + \eth_k m) - \xi_\infty(t + \eth_\infty m)\right\|_{\C^d} \|\xi_k''(t)\|_{\C^d} \, dt \right) \text{.}
\end{multline}

Our assumptions allow us to apply Lemma (\ref{ldom-lemma}) to each term in the right hand side of Inequality (\ref{l1-cont-eq2}) to conclude that:
\begin{equation*}
\lim_{k\rightarrow\infty} \left\| \inner{\xi_k}{\xi_k}{\HeisenbergMod{\theta_k}{p,q,d}} - \inner{\xi_k}{\xi_\infty}{\HeisenbergMod{\theta_\infty}{p,q,d}} \right\|_{\ell^1(\Z^2)} = 0\text{.}
\end{equation*}

We handle the second term of Inequality (\ref{l1-cont-eq1}) in a similar manner.%  By Lemma (\ref{ip-lemma}), we have:
% \begin{multline*}
% \left\| \inner{\xi_k}{\xi_\infty}{\HeisenbergMod{\theta_\infty}{p,q,d}} - \inner{\xi_\infty}{\xi_\infty}{\HeisenbergMod{\theta_\infty}{p,q,d}} \right\|_{\ell^1(\Z^2)} \\
% \leq \sum_{n\in\Z}\frac{1}{4\pi^2 n^2} \left( \int_\R \sum_{m\in\Z} \left\|\xi_k''(t + \eth_\infty m) - \xi_\infty''(t +\eth_\infty m)\right\|_{\C^d} \|\xi_\infty(t)\|_{\C^d} \, dt + \right. \\
% + \int_\R \sum_{m\in\Z} \left\|\xi_k'(t + \eth_\infty m) - \xi_\infty'(t + \eth_\infty m)\right\|_{\C^d} \|\xi_\infty'(t)\|_{\C^d} \, dt \\
% + \left. \int_\R \sum_{m\in\Z} \left\|\xi_k(t + \eth_\infty m) - \xi_\infty(t + \eth_\infty m)\right\|_{\C^d} \|\xi_\infty''(t)\|_{\C^d} \, dt \right) \text{.}
% \end{multline*}
% Once again, by the assumptions on $(\xi_k)_{k\in\Nbar}$ allow us to apply Lemma (\ref{ldom-lemma}) to conclude:
% \begin{equation*}
% \lim_{k\rightarrow\infty} \left\| \inner{\xi_k}{\xi_\infty}{\HeisenbergMod{\theta_\infty}{p,q,d}} - \inner{\xi_\infty}{\xi_\infty}{\HeisenbergMod{\theta_\infty}{p,q,d}} \right\|_{\ell^1(\Z^2)} = 0\text{,}
% \end{equation*}
% and

From Inequality (\ref{l1-cont-eq1}), our lemma is proven.
\end{proof}

We now conclude this section with the proof that indeed, Heisenberg $C^\ast$-Hilbert norms form continuous families of norms for a fixed projective representation of some $\Z_q^2$.

\begin{proposition}\label{module-norm-continuity-prop}
Let $p,q \in \N$ and $d\in q\N$ with $d > 0$. Let $(\xi)_{k\in\Nbar}$ be a family in $\mathcal{S}(\C^d)$ such that $(k,t)\in\Nbar\times\R \mapsto \xi_k(t)$ is (jointly) continuous and there exists $M > 0$ such that $\|\xi_k^{(s)}(t)\|_{\C^d} \leq \frac{M}{1 + t^2}$ for all $k\in\Nbar$, $t\in \R$ and $s\in\{0,1,2\}$.

If $(\theta_k)_{k\in\N}$ is a sequence in $\R$ converging to $\theta_\infty$ and such that $\theta_k - \frac{p}{q} = 0$ for all $k\in\Nbar$, then:
\begin{equation*}
\lim_{k\rightarrow\infty} \left\| \xi_k \right\|_{\HeisenbergMod{\theta_k}{p,q,d}} = \left\|\xi_\infty\right\|_{\HeisenbergMod{\theta_\infty}{p,q,d}} \text{.}
\end{equation*}
\end{proposition}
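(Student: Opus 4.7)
The plan is to reduce the proposition to the $\ell^1(\Z^2)$-continuity result of Lemma \ref{l1-cont-lemma}. By definition of the Hilbert-module norm one has
\[
\|\xi\|_{\HeisenbergMod{\theta}{p,q,d}}^2 = \left\|\inner{\xi}{\xi}{\HeisenbergMod{\theta}{p,q,d}}\right\|_{\qt{\theta}}
\]
for any $\theta\in\R$ with $\theta - \frac{p}{q} \neq 0$ and any $\xi\in\mathcal{S}(\C^d)$; setting $a_k = \inner{\xi_k}{\xi_k}{\HeisenbergMod{\theta_k}{p,q,d}}$ for all $k\in\Nbar$, it thus suffices to show $\|a_k\|_{\qt{\theta_k}} \to \|a_\infty\|_{\qt{\theta_\infty}}$, after which the continuity of the square root on $[0,\infty)$ delivers the claim. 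Our hypotheses on $(\xi_k)_{k\in\Nbar}$ and on $(\theta_k)_{k\in\N}$ match those of Lemma \ref{l1-cont-lemma} (reading the condition $\theta_k-\frac{p}{q}=0$ in the statement as $\neq 0$, clearly the intended hypothesis), so that lemma yields $\|a_k - a_\infty\|_{\ell^1(\Z^2)} \to 0$.

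The natural next step is the triangle inequality
\[
\bigl|\|a_k\|_{\qt{\theta_k}} - \|a_\infty\|_{\qt{\theta_\infty}}\bigr| \leq \|a_k - a_\infty\|_{\qt{\theta_k}} + \bigl|\|a_\infty\|_{\qt{\theta_k}} - \|a_\infty\|_{\qt{\theta_\infty}}\bigr|,
\]
in which the first summand is controlled by the earlier noted inequality $\|\cdot\|_{\qt{\theta}} \leq \|\cdot\|_{\ell^1(\Z^2)}$ (valid for every $\theta\in\R$), giving $\|a_k - a_\infty\|_{\qt{\theta_k}} \leq \|a_k-a_\infty\|_{\ell^1(\Z^2)}$, which tends to $0$ by the previous step.

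The remaining task, and the main obstacle, is to show that for a fixed $a \in \ell^1(\Z^2)$ the numerical map $\theta \mapsto \|a\|_{\qt{\theta}}$ is continuous at $\theta_\infty$, so that the second summand above also vanishes. My approach would be to first approximate $a$ in $\ell^1(\Z^2)$ by trigonometric polynomials (finitely supported elements of $\ell^1(\Z^2)$); because the $C^\ast$-norms are dominated by the $\ell^1(\Z^2)$-norm uniformly in $\theta$, this $\ell^1$-approximation is uniform in $\theta$ and reduces the question to continuity of $\theta \mapsto \|P\|_{\qt{\theta}}$ for trigonometric polynomials $P = \sum_{n\in F} P(n)\delta_n$. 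Continuity for such $P$ is the classical continuous-field-of-$C^\ast$-algebras property of quantum tori, already invoked in the bridge constructions of \cite{Latremoliere13c}; combining it with the uniform $\ell^1$-approximation and feeding the outcome back into the triangle inequality above finishes the proof.
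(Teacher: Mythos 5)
Your proposal is correct and follows essentially the same route as the paper: the same reduction to the inner products $a_k=\inner{\xi_k}{\xi_k}{\HeisenbergMod{\theta_k}{p,q,d}}$, the same triangle-inequality splitting controlled by $\|\cdot\|_{\qt{\theta}}\leq\|\cdot\|_{\ell^1(\Z^2)}$ together with Lemma (\ref{l1-cont-lemma}), and the same final appeal to continuity of $\theta\mapsto\|a\|_{\qt{\theta}}$ (the paper cites \cite[Corollary 2.7]{Rieffel89} directly for $\ell^1(\Z^2)$ elements, whereas you re-derive that step by uniform $\ell^1$-approximation with trigonometric polynomials, which is a harmless elaboration of the same fact). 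You are also right that the hypothesis $\theta_k-\frac{p}{q}=0$ is a typo for $\neq 0$, as the paper's own proof confirms.
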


\begin{proof}
For each $k\in\N\cup\left\{\infty\right\}$, we set $\eth_k = \theta_k - \frac{p}{q} \not= 0$. 

We first compute:

\begin{equation}\label{cont-norm-eq1}
\begin{split}
\left| \|\xi_k\|^2_{\HeisenbergMod{\theta_k}{p,q,d}} - \|\xi_\infty\|^2_{\HeisenbergMod{\theta_\infty}{p,q,d}} \right| &= \left| \left\|\inner{\xi_k}{\xi_k}{\HeisenbergMod{\theta_k}{p,q,d}}\right\|_{\qt{\theta_k}} - \left\|\inner{\xi_\infty}{\xi_\infty}{\HeisenbergMod{\theta_\infty}{p,q,d}}\right\|_{\qt{\theta_\infty}} \right|\\
&\leq \left| \left\|\inner{\xi_k}{\xi_k}{\HeisenbergMod{\theta_k}{p,q,d}}\right\|_{\qt{\theta_k}} - \left\|\inner{\xi_\infty}{\xi_\infty}{\HeisenbergMod{\theta_\infty}{p,q,d}}\right\|_{\qt{\theta_k}}\right| \\
&\quad + \left|\left\|\inner{\xi_\infty}{\xi_\infty}{\HeisenbergMod{\theta_\infty}{p,q,d}}\right\|_{\qt{\theta_k}} - \left\|\inner{\xi_\infty}{\xi_\infty}{\HeisenbergMod{\theta_\infty}{p,q,d}}\right\|_{\qt{\theta_\infty}} \right|\\
&\leq \left\| \inner{\xi_k}{\xi_k}{\HeisenbergMod{\theta_k}{p,q,d}} - \inner{\xi_\infty}{\xi_\infty}{\HeisenbergMod{\theta_\infty}{p,q,d}}\right\|_{\qt{\theta_k}} \\
&\quad + \left|\left\|\inner{\xi_\infty}{\xi_\infty}{\HeisenbergMod{\theta_\infty}{p,q,d}}\right\|_{\qt{\theta_k}} - \left\|\inner{\xi_\infty}{\xi_\infty}{\HeisenbergMod{\theta_\infty}{p,q,d}}\right\|_{\qt{\theta_\infty}} \right| \\
&\leq \left\| \inner{\xi_k}{\xi_k}{\HeisenbergMod{\theta_k}{p,q,d}} - \inner{\xi_\infty}{\xi_\infty}{\HeisenbergMod{\theta_\infty}{p,q,d}}\right\|_{\ell^1(\Z^2)} \\
&\quad + \left|\left\|\inner{\xi_\infty}{\xi_\infty}{\HeisenbergMod{\theta_\infty}{p,q,d}}\right\|_{\qt{\theta_k}} - \left\|\inner{\xi_\infty}{\xi_\infty}{\HeisenbergMod{\theta_\infty}{p,q,d}}\right\|_{\qt{\theta_\infty}} \right| \text{.}
\end{split}
\end{equation}

We now apply Lemma (\ref{l1-cont-lemma}) to conclude that:
\begin{equation*}
\lim_{k\rightarrow\infty} \left\| \inner{\xi_k}{\xi_k}{\HeisenbergMod{\theta_k}{p,q,d}} - \inner{\xi_\infty}{\xi_\infty}{\HeisenbergMod{\theta_\infty}{p,q,d}}\right\|_{\ell^1(\Z^2)} = 0\text{.}
\end{equation*}

Now, for any $f\in\ell^1(\Z^2)$, the function $\theta\in\R\mapsto\|f\|_{\qt{\theta}}$ is continuous by \cite[Corollary 2.7]{Rieffel89}. Hence, using Remark (\ref{l1-rmk}):
\begin{equation*}
\lim_{k\rightarrow\infty}\left|\left\|\inner{\xi_\infty}{\xi_\infty}{\HeisenbergMod{\theta_\infty}{p,q,d}}\right\|_{\qt{\theta_k}} - \left\|\inner{\xi_\infty}{\xi_\infty}{\HeisenbergMod{\theta_\infty}{p,q,d}}\right\|_{\qt{\theta_\infty}} \right| = 0\text{.}
\end{equation*}
Thus, we conclude from Inequality (\ref{cont-norm-eq1}) that:
\begin{equation*}
\lim_{k\rightarrow\infty} \left\| \xi_k \right\|_{\HeisenbergMod{\theta_k}{p,q,d}}^2 = \left\|\xi_\infty\right\|_{\HeisenbergMod{\theta_\infty}{p,q,d}}^2
\end{equation*}
which, by continuity of the square root, proves our lemma.
\end{proof}

\begin{corollary}\label{module-norm-continuity-cor}
Let $p,q \in \N$ and $d\in q\N$ with $d > 0$. Let $\xi\in\mathcal{S}(\C^d)$. If $(\theta_k)_{k\in\N}$ is a sequence in $\R$ converging to $\theta_\infty$ and such that $\theta_k - \frac{p}{q} = 0$ for all $k\in\Nbar$, then:
\begin{equation*}
\lim_{k\rightarrow\infty} \left\| \xi \right\|_{\HeisenbergMod{\theta_k}{p,q,d}} = \left\|\xi\right\|_{\HeisenbergMod{\theta_\infty}{p,q,d}} \text{.}
\end{equation*}
\end{corollary}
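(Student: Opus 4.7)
The plan is to apply Proposition \ref{module-norm-continuity-prop} directly to the constant family $(\xi_k)_{k\in\Nbar}$ defined by $\xi_k = \xi$ for every $k \in \Nbar$. The two hypotheses of the proposition then reduce to hypotheses about the single Schwarz function $\xi$.

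First, I would verify joint continuity. Since $(k,t) \in \Nbar \times \R \mapsto \xi_k(t) = \xi(t)$ does not depend on the first variable, and $\xi$ is $C^\infty$ (in particular continuous) as an element of $\mathcal{S}(\C^d)$, joint continuity on $\Nbar \times \R$ is immediate (the projection onto the second factor is continuous and $\Nbar$ is the one-point compactification of $\N$).

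Next, I would establish the uniform decay estimate. Because $\xi \in \mathcal{S}(\C^d)$, each of $\xi$, $\xi'$, $\xi''$ belongs to $\mathcal{S}(\C^d)$ (Schwarz functions are closed under differentiation), so for each $s \in \{0,1,2\}$ there is a constant $M_s > 0$ with
\begin{equation*}
(1 + t^2) \left\| \xi^{(s)}(t) \right\|_{\C^d} \leq M_s \quad \text{for all } t \in \R\text{.}
\end{equation*}
Setting $M = \max\{M_0, M_1, M_2\}$, we obtain $\|\xi_k^{(s)}(t)\|_{\C^d} = \|\xi^{(s)}(t)\|_{\C^d} \leq \frac{M}{1+t^2}$ uniformly in $k \in \Nbar$, which is the required bound.

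With both hypotheses verified, Proposition \ref{module-norm-continuity-prop} yields
\begin{equation*}
\lim_{k\rightarrow\infty} \left\| \xi \right\|_{\HeisenbergMod{\theta_k}{p,q,d}} = \lim_{k\rightarrow\infty} \left\| \xi_k \right\|_{\HeisenbergMod{\theta_k}{p,q,d}} = \left\| \xi_\infty \right\|_{\HeisenbergMod{\theta_\infty}{p,q,d}} = \left\| \xi \right\|_{\HeisenbergMod{\theta_\infty}{p,q,d}}\text{,}
\end{equation*}
which is exactly the conclusion. There is no genuine obstacle here; the corollary is an immediate specialization of the proposition to the case where the vectors do not vary with $k$, and the only ``work'' is observing that a single Schwarz function automatically supplies the uniform-in-$k$ decay bound required by the proposition.
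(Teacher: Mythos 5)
Your proof is correct and follows exactly the paper's own argument: the paper likewise applies Proposition (\ref{module-norm-continuity-prop}) to the constant family $k \in \Nbar \mapsto \xi$ and notes that the hypotheses hold because $\xi$ is a Schwarz function. Your write-up merely makes explicit the (routine) verification of joint continuity and of the uniform decay bound, which the paper leaves implicit.
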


\begin{proof}
We apply Proposition (\ref{module-norm-continuity-prop}) to the family $k \in \Nbar \mapsto \xi$. We note that since $\xi$ is a Schwarz function, our assumptions are met.
\end{proof}

\section{The action of the Heisenberg group on Heisenberg modules}

Our goal in this paper is to prove that Heisenberg modules may be endowed with a {\gQVB} structure over quantum $2$-tori using a D-norm built from a Lie group action and inspired by the construction of \cite{Rieffel98a}, albeit involving a projective action of a locally compact group, which will not act via isometries of the D-norm. These changes will introduce new difficulties which we will handle in the next few sections. As a first step, we study the actions of the Heisenberg group on Heisenberg modules.

One motivation for the results in this section is to establish the properties which will meet the hypothesis of the main results in our next section, from which our D-norm will emerge. We also note that the actions $\alpha_{\eth,d}$, for all $\eth \in \R\setminus\{0\}$ and $d\in \N\setminus\{0\}$, is a strongly continuous action by isometries of $L^2(\R)\otimes\C^d$, but we need these results to be proven for the Heisenberg $C^\ast$-Hilbert norms, which dominate the norm of $L^2(\R)\otimes\C^d$.

We shall use the same hypotheses for a series of lemmas and our main definition in this section, and thus we group them in the following.

\begin{hypothesis}\label{hyp-1}
Let $p\in\Z$, $q \in\N\setminus\{0\}$, and let $d\in q\N$ with $d > 0$. Let $\theta \in \R\setminus\left\{\frac{p}{q}\right\}$. We write $\eth = \theta - \frac{p}{q}$.

We shall employ the notations of Theorem-Definition (\ref{HeisenbergMod-thm}).
\end{hypothesis}

We begin with two lemmas which will prove that $\HeisenbergGroup$ acts via isometries of the norm of the Heisenberg modules on the subspace of Schwarz functions --- where we have an explicit formula for our inner product --- and thus can indeed be extended to the entire module.

% \begin{align*}
% (x,y,u)(x',y',u') &= (x+x',y+y',u+u'+x y') \text{, } (x,y,u)^{-1} = (-x,-y,xy-u) \\
% (n,m,v)(x,y,u) &= (n+x,m+y,u + v + n y) \\
% (x,y,u)^{-1} (n+x,m+y,u + v + n y) &= (-x,-y,x y - u)(n+x,m+y,u + v + n y) \\
% &= (n, m, x y - u + u + v + n y - x m - x y) \\
% &= (n, m, v + ny - xm) \\
% &= (n, m, v)(0,0,ny - xm)
% \end{align*}
\begin{lemma}\label{alpha-beta-lemma}
We assume Hypothesis (\ref{hyp-1}). For all $(x,y,u) \in \HeisenbergGroup$, if $z_1 = \exp\left(2i\pi \eth y\right)$ and $z_2 = \exp\left(-2i\pi \eth x\right)$, and if $\xi,\omega \in \module{S}_\theta^{p,q,d}$, then:
\begin{equation*}
\inner{\alpha_{\eth,d}^{x,y,u}(\xi)}{\alpha_{\eth,d}^{x,y,u}(\omega)}{\HeisenbergMod{\theta}{p,q,d}} = \beta_\theta^{z_1,z_2}\left(\inner{\xi}{\omega}{\HeisenbergMod{\theta}{p,q,d}}\right) \text{.}
\end{equation*}
\end{lemma}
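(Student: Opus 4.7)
The plan is to unfold the definition of the Heisenberg module inner product pointwise on $\Z^2$ and reduce everything to a covariance relation between the projective representation $\varpi_{p,q,\eth,d}$ and the Heisenberg group action $\alpha_{\eth,d}$. Concretely, since $\inner{\xi}{\omega}{\HeisenbergMod{\theta}{p,q,d}}(n,m) = \inner{\varpi_{p,q,\eth,d}^{n,m}\xi}{\omega}{L^2(\R)\otimes\C^d}$, and since $\alpha_{\eth,d}^{x,y,u}$ is unitary on $L^2(\R)\otimes\C^d$, we have
\begin{equation*}
\inner{\varpi_{p,q,\eth,d}^{n,m}\alpha_{\eth,d}^{x,y,u}\xi}{\alpha_{\eth,d}^{x,y,u}\omega}{L^2(\R)\otimes\C^d} = \inner{(\alpha_{\eth,d}^{x,y,u})^{-1}\varpi_{p,q,\eth,d}^{n,m}\alpha_{\eth,d}^{x,y,u}\xi}{\omega}{L^2(\R)\otimes\C^d}.
\end{equation*}
Thus the whole statement reduces to proving that $(\alpha_{\eth,d}^{x,y,u})^{-1}\varpi_{p,q,\eth,d}^{n,m}\alpha_{\eth,d}^{x,y,u} = z_1^n z_2^m \,\varpi_{p,q,\eth,d}^{n,m}$, which, after untangling the tensor product decomposition $\varpi_{p,q,\eth,d}^{n,m} = \sigma_{\eth,1}^{n,m}\otimes\rho_{p,q,d}^{[n],[m]}$ and $\alpha_{\eth,d}^{x,y,u} = \alpha_{\eth,1}^{x,y,u}\otimes\mathrm{id}$, is a scalar-valued assertion about the single-variable objects $(\alpha_{\eth,1}^{x,y,u})^{-1}\sigma_{\eth,1}^{n,m}\alpha_{\eth,1}^{x,y,u}$.

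Next I would use that, by Notation \ref{Schrodinger-notation}, the Schr\"odinger operator $\sigma_{\eth,1}^{n,m}$ equals $\alpha_{\eth,1}^{n,m,nm/2}$, and that $\alpha_{\eth,1}$ is a \emph{group} representation of $\HeisenbergGroup$. Consequently the conjugation $(\alpha_{\eth,1}^{x,y,u})^{-1}\alpha_{\eth,1}^{n,m,nm/2}\alpha_{\eth,1}^{x,y,u}$ equals $\alpha_{\eth,1}^{(x,y,u)^{-1}(n,m,nm/2)(x,y,u)}$. A direct computation in $\HeisenbergGroup$ using the explicit multiplication law gives $(x,y,u)^{-1} = (-x,-y,-u+xy)$, and
\begin{equation*}
(x,y,u)^{-1}(n,m,\tfrac{nm}{2})(x,y,u) = \left(n,\,m,\,\tfrac{nm}{2} + ny - xm\right),
\end{equation*}
since the first two coordinates are central modulo the $u$-component and only the $u$-component is affected by the commutator.

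Finally, comparing this with $(n,m,nm/2)$ shows the conjugate differs from $\sigma_{\eth,1}^{n,m}$ only by the phase coming from the shift $u \mapsto u + (ny - xm)$ in the $\alpha_{\eth,1}$ formula \eqref{alpha-action-eq}, which by direct inspection multiplies $\sigma_{\eth,1}^{n,m}$ by $\exp(2i\pi\eth(ny - xm)) = z_1^n z_2^m$. Tensoring with $\rho_{p,q,d}^{[n],[m]}$ preserves this scalar, so $(\alpha_{\eth,d}^{x,y,u})^{-1}\varpi_{p,q,\eth,d}^{n,m}\alpha_{\eth,d}^{x,y,u} = z_1^n z_2^m \varpi_{p,q,\eth,d}^{n,m}$, and the desired identity falls out once we recognize $z_1^n z_2^m \inner{\xi}{\omega}{\HeisenbergMod{\theta}{p,q,d}}(n,m)$ as the value at $(n,m)$ of $\beta_\theta^{z_1,z_2}\bigl(\inner{\xi}{\omega}{\HeisenbergMod{\theta}{p,q,d}}\bigr)$ by Theorem-Definition \ref{dual-action-thm}. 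The only delicate bookkeeping is the Heisenberg group multiplication and keeping track of the half-integer $nm/2$ factor cancelling correctly; no analysis or passage to limits is required since $\xi,\omega \in \module{S}_\theta^{p,q,d}$.
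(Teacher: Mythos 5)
Your proposal is correct and follows essentially the same route as the paper: both unfold the inner product pointwise on $\Z^2$, move $\alpha_{\eth,d}^{x,y,u}$ across the $L^2(\R)\otimes\C^d$ inner product by unitarity, use that $\alpha_{\eth,1}$ is a genuine group representation to turn the operator conjugation into conjugation in $\HeisenbergGroup$, and extract the phase $\exp(2i\pi\eth(ny-xm)) = z_1^n z_2^m$ before identifying the result with the dual action. Your explicit computation of $(x,y,u)^{-1}(n,m,\tfrac{nm}{2})(x,y,u) = (n,m,\tfrac{nm}{2}+ny-xm)$ checks out and matches the phase the paper obtains.
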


\begin{proof}
Let $n,m \in \Z$. We compute:

\begin{multline*}
\inner{\alpha_{\eth,d}^{x,y,u}(\xi)}{\alpha_{\eth,d}^{x,y,u}(\omega)}{\HeisenbergMod{\theta}{p,q,d}}(n,m) \\
\begin{split}
&= \inner{\varpi_{p,q,\eth,d}^{n,m}\alpha_{\eth,d}^{x,y,u} \xi}{\alpha_{\eth,d}^{x,y,u}\omega}{L^2(\R)\otimes\C^d}\\
&= \inner{ \left(\sigma_{\eth,1}^{n,m}\alpha_{\eth,1}^{x,y,u} \otimes \rho_{p,q,d}^{[n],[m]}\right) \xi}{\alpha_{\eth,d}^{x,y,u}\omega}{L^2(\R)\otimes\C^d} \\
&= \inner{\left(\alpha_{\eth,1}^{(x,y,u)^{-1}}\alpha_{\eth,1}^{n,m,\frac{nm}{2}}\alpha_{\eth,1}^{x,y,u} \otimes \rho_{p,q,d}^{[n],[m]}\right) \xi}{\omega}{L^2(\R)\otimes\C^d}\\
&= \inner{\exp(2i\pi\eth(y n - x m)) \left(\sigma_{\eth,1}^{n,m}\otimes\rho_{p,q,d}^{[n],[m]}\right) \xi}{\omega}{L^2(\R)\otimes\C^d} \\
&= z_1^n z_2^m \inner{\varpi_{p,q,\eth,d}^{n,m}\xi}{\omega}{L^2(\R)\otimes\C^d} \text{.}
\end{split}
\end{multline*}

Therefore, by definition of the dual action $\beta$:
\begin{equation*}
\inner{\alpha_{\eth,d}^{x,y,u}(\xi)}{\alpha_{\eth,d}^{x,y,u}(\omega)}{\HeisenbergMod{\theta}{p,q,d}} = \beta_\theta^{z_1,z_2}\left(\inner{\xi}{\omega}{\HeisenbergMod{\theta}{p,q,d}}\right)
\end{equation*}
as desired. %
% Therefore, using the fact that $\beta_\theta^{z_1,z_2}$ is a *-morphism, and writing $u_\theta = \delta_{1,0}$ and $v_\theta = \delta_{0,1}$, we conclude:
% \begin{equation*}
% \begin{split}
% \inner{\alpha_{\eth,d}^{x,y,u}(\xi)}{\alpha_{\eth,d}^{x,y,u}(\omega)}{\HeisenbergMod{\theta}{p,q,d}} &= \sum_{n,m \in \Z} \inner{\varpi_{p,q,\eth,d}^{n,m}\alpha_{\eth,1}^{x,y,u} \xi}{\alpha_{\eth,1}^{x,y,u}\omega}{L^2(\R)\otimes\C^d} u_\theta^n v_\theta^m \\
% &= \sum_{n,m\in\Z} z_1^n z_2^m  \inner{\varpi_{p,q,\eth,d}^{n,m}\xi}{\omega}{L^2(\R)\otimes\C^d} u_\theta^n v_\theta^m \\
% &= \sum_{n,m\in\Z} \inner{\varpi_{p,q,\eth,d}^{n,m}\xi}{\omega}{L^2(\R)\otimes\C^d}  \beta_\theta^{z_1,z_2} \left(u_\theta^n v_\theta^m\right)\\
% &= \beta_\theta^{z_1,z_2} \left(\sum_{n,m\in\Z} \inner{\varpi_{p,q,\eth,d}^{n,m}\xi}{\omega}{L^2(\R)\otimes\C^d} u_\theta^n v_\theta^m\right)\\
% &= \beta_\theta^{z_1,z_2}\left(\inner{\xi}{\omega}{\HeisenbergMod{\theta}{p,q,d}}\right)\text{,}
% \end{split}
% \end{equation*}
%as desired.
\end{proof}

To ease our notations in this section, we set:
\begin{notation}
For all $(x,y) \in \R^2$ and $\eth > 0$, we define:
\begin{equation*}
\upsilon_\eth(x,y) = \left(\exp(2i\pi \eth y), \exp(-2i\pi \eth x) \right) \in \T^2 \text{.}
\end{equation*}
\end{notation}

We now show that the Heisenberg group acts by isometries for the $C^\ast$-Hilbert norm.

\begin{lemma}\label{alpha-isometry-lemma}
We assume Hypothesis (\ref{hyp-1}). For all $(x,y,u) \in \HeisenbergGroup$, the map $\alpha_{\eth,d}^{x,y,u}$ is an isometry of $\left(\HeisenbergMod{\theta}{p,q,d},\|\cdot\|_{\HeisenbergMod{\theta}{p,q,d}}\right)$.
\end{lemma}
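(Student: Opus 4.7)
The plan is to combine the inner-product identity of Lemma \ref{alpha-beta-lemma} with the fact that any $\ast$-automorphism of a C*-algebra is isometric, and then extend the resulting isometry from the dense subspace $\module{S}_\theta^{p,q,d}$ of Schwarz functions to the full completion $\HeisenbergMod{\theta}{p,q,d}$ by standard continuity arguments.

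First, I would fix $(x,y,u)\in\HeisenbergGroup$ and $\xi\in\module{S}_\theta^{p,q,d}$, and apply Lemma \ref{alpha-beta-lemma} with $\omega=\xi$ to get
\begin{equation*}
\inner{\alpha_{\eth,d}^{x,y,u}\xi}{\alpha_{\eth,d}^{x,y,u}\xi}{\HeisenbergMod{\theta}{p,q,d}} = \beta_\theta^{\upsilon_\eth(x,y)}\!\left(\inner{\xi}{\xi}{\HeisenbergMod{\theta}{p,q,d}}\right)\text{.}
\end{equation*}
Since $\beta_\theta^{z_1,z_2}$ is a $\ast$-automorphism of $\qt{\theta}$ by Theorem-Definition \ref{dual-action-thm}, it preserves the C*-norm $\|\cdot\|_{\qt{\theta}}$. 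Taking $\|\cdot\|_{\qt{\theta}}$ on both sides and then extracting square roots yields
\begin{equation*}
\left\|\alpha_{\eth,d}^{x,y,u}\xi\right\|_{\HeisenbergMod{\theta}{p,q,d}} = \left\|\xi\right\|_{\HeisenbergMod{\theta}{p,q,d}}\text{,}
\end{equation*}
so $\alpha_{\eth,d}^{x,y,u}$ restricts to a linear isometry of $\module{S}_\theta^{p,q,d}$ for the Heisenberg module norm.

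Next I would verify that $\alpha_{\eth,d}^{x,y,u}$ maps $\module{S}_\theta^{p,q,d}=\mathcal{S}(\C^d)$ into itself. By Expression \eqref{alpha-action-eq}, $\alpha_{\eth,d}^{x,y,u}\xi(s) = \exp\!\left(2i\pi(\eth u + sx)\right)\xi(s+\eth y)$; translation by $\eth y$ clearly preserves Schwarz functions, and multiplication by the smooth, unimodular character $s\mapsto e^{2i\pi sx}$ does too, since by the Leibniz rule each derivative of the product is a finite linear combination of terms of the form $(2i\pi x)^k\xi^{(j-k)}(s+\eth y)\,e^{2i\pi sx}$, whose $\C^d$-norm decays faster than any polynomial. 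Running the same argument with $(x,y,u)^{-1}$ shows that $\alpha_{\eth,d}^{x,y,u}$ is in fact a linear bijection of $\module{S}_\theta^{p,q,d}$ onto itself.

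Finally, because $\module{S}_\theta^{p,q,d}$ is dense in $\HeisenbergMod{\theta}{p,q,d}$ by Theorem-Definition \ref{HeisenbergMod-thm}, the linear isometry $\alpha_{\eth,d}^{x,y,u}|_{\module{S}_\theta^{p,q,d}}$ extends uniquely by uniform continuity to a linear isometry of $\HeisenbergMod{\theta}{p,q,d}$ onto itself, the inverse being the unique continuous extension of $\alpha_{\eth,d}^{(x,y,u)^{-1}}|_{\module{S}_\theta^{p,q,d}}$. There is no real obstacle here: the entire statement reduces to Lemma \ref{alpha-beta-lemma} plus the standard fact that $\ast$-automorphisms are norm preserving. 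The only minor point worth flagging is consistency of the extension with the ambient $L^2$ action, which follows from the observation that $\inner{\xi}{\xi}{\HeisenbergMod{\theta}{p,q,d}}(0,0) = \|\xi\|_{L^2(\R)\otimes\C^d}^2$ is dominated by $\|\inner{\xi}{\xi}{\HeisenbergMod{\theta}{p,q,d}}\|_{\qt{\theta}} = \|\xi\|_{\HeisenbergMod{\theta}{p,q,d}}^2$, so Cauchy sequences in the Heisenberg norm are Cauchy in $L^2(\R)\otimes\C^d$ and the extension agrees with the original $\alpha_{\eth,d}^{x,y,u}$ on its natural domain.
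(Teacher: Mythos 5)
Your proposal is correct and follows essentially the same route as the paper: apply Lemma (\ref{alpha-beta-lemma}) with $\omega=\xi$, use that the dual action $\beta_\theta$ acts by $\ast$-automorphisms and hence preserves $\|\cdot\|_{\qt{\theta}}$, and conclude the isometry on the dense subspace $\module{S}_\theta^{p,q,d}$. The paper defers the extension by continuity to the whole module (and the stability of the Schwarz space) to the notation following the lemma, whereas you fold those routine verifications into the proof itself; this is a difference of presentation, not of substance.
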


\begin{proof}
Let $(x,y,u) \in \HeisenbergGroup$ and $\xi \in \module{S}_{\theta}^{p,q,d}$. We compute:
\begin{equation*}
\begin{split}
\left\|\alpha_\eth^{x,y,u}\xi\right\|_{\HeisenbergMod{\theta}{p,q,d}}^2 &= \left\|\inner{\alpha_{\eth,d}^{x,y,u}\xi}{\alpha_{\eth,d}^{x,y,u}\xi}{\HeisenbergMod{\theta}{p,q,d}}\right\|_{\qt{\theta}} \\
&= \left\|\beta_\theta^{\upsilon_r(x,y)}\inner{\xi}{\xi}{\HeisenbergMod{\theta}{p,q,d}}\right\|_{\qt{\theta}} \text{ by Lemma (\ref{alpha-beta-lemma}),}\\
&= \left\|\inner{\xi}{\xi}{\HeisenbergMod{\theta}{p,q,d}}\right\|_{\qt{\theta}} \\
&= \left\|\xi\right\|_{\HeisenbergMod{\theta}{p,q,d}}^2 \text{.}
\end{split}
\end{equation*}
This completes our proof.
\end{proof}

\begin{notation}
We use the notations of Hypothesis (\ref{hyp-1}). The action $\alpha_{\eth,d}$ of $\HeisenbergGroup$ on $\module{S}_\theta^{p,q,d}$ may thus be extended to $\HeisenbergMod{\theta}{p,q,d}$ by extending by continuity $\alpha_{\eth,d}^{x,y,u}$ for all $(x,y,u) \in \HeisenbergGroup$; we shall keep the notation of this extension as $\alpha_{\eth,d}$. We note that it also acts via isometry on  $\left(\HeisenbergMod{\theta}{p,q,d},\|\cdot\|_{\HeisenbergMod{\theta}{p,q,d}}\right)$.

We also use the same notation for $\sigma_{\eth,d}$ extended to $\left(\HeisenbergMod{\theta}{p,q,d},\|\cdot\|_{\HeisenbergMod{\theta}{p,q,d}}\right)$.
\end{notation}

The actions of the Heisenberg group on Heisenberg modules is by morphism modules, in the sense of \cite[Definition 3.5]{Latremoliere16c}. This result will play a role in the proof that our D-norm satisfies the modular version of the Leibniz inequality.

\begin{lemma}\label{alpha-morphism-lemma}
We assume Hypothesis (\ref{hyp-1}).  For all $a\in\qt{\theta}$, $\xi \in \HeisenbergMod{\theta}{p,q,d}$ and $(x,y,u) \in \HeisenbergGroup$, then:
\begin{equation*}
\alpha_{\eth,d}^{x,y,u}\left(a\xi\right) = \beta_\theta^{\upsilon_\eth(x,y)}(a)\alpha_{\eth,d}^{x,y,u}(\xi)\text{.}
\end{equation*}
\end{lemma}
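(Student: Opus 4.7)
\bigskip
\noindent\textbf{Proof plan.}
The plan is to establish the identity first on a dense subspace where everything is given by explicit formulas, and then extend by continuity. Since $\ell^1(\Z^2)$ is dense in $\qt{\theta}$, and finite linear combinations of the $\delta_{n,m}$ are dense in $\ell^1(\Z^2)$, and since both sides of the identity are linear in $a$, it suffices to verify the identity for $a = \delta_{n,m}$ with $(n,m)\in\Z^2$, and for $\xi$ ranging over $\module{S}_\theta^{p,q,d}$ (which is dense in $\HeisenbergMod{\theta}{p,q,d}$). By definition of the module structure, $\delta_{n,m}\xi = \varpi_{p,q,\eth,d}^{n,m}\xi$, and by the dual action formula of Theorem-Definition \ref{dual-action-thm}, $\beta_\theta^{\upsilon_\eth(x,y)}(\delta_{n,m}) = z_1^n z_2^m \delta_{n,m}$ with $z_1 = \exp(2i\pi\eth y)$ and $z_2 = \exp(-2i\pi\eth x)$. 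Hence the claim reduces to the operator identity
\begin{equation*}
\alpha_{\eth,d}^{x,y,u}\varpi_{p,q,\eth,d}^{n,m} = z_1^n z_2^m \, \varpi_{p,q,\eth,d}^{n,m}\alpha_{\eth,d}^{x,y,u}
\end{equation*}
on $\module{S}_\theta^{p,q,d}$.

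Next, I would use the tensor decompositions $\alpha_{\eth,d}^{x,y,u} = \alpha_{\eth,1}^{x,y,u}\otimes\mathrm{id}_{\C^d}$ and $\varpi_{p,q,\eth,d}^{n,m} = \sigma_{\eth,1}^{n,m}\otimes\rho_{p,q,d}^{[n],[m]}$. Since $\mathrm{id}_{\C^d}$ commutes with $\rho_{p,q,d}^{[n],[m]}$, the identity reduces to the commutation relation on $L^2(\R)$
\begin{equation*}
\alpha_{\eth,1}^{x,y,u}\sigma_{\eth,1}^{n,m} = z_1^n z_2^m \, \sigma_{\eth,1}^{n,m}\alpha_{\eth,1}^{x,y,u}\text{.}
\end{equation*}
This is the heart of the matter, and I would verify it by a direct computation evaluated on an arbitrary $\xi\in L^2(\R)$: applying the explicit formulas from Notation \ref{Schrodinger-notation}, both sides acting on $\xi$ produce $\xi$ shifted by $\eth(y+m)$, multiplied by the same base phase $\exp(i\pi\eth nm + 2i\pi\eth u + 2i\pi s(x+n))$, and differing only by the respective cross-terms $\exp(2i\pi\eth yn)$ (for $\alpha \sigma$) and $\exp(2i\pi\eth xm)$ (for $\sigma\alpha$). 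Their ratio is precisely $\exp(2i\pi\eth(ny - xm)) = z_1^n z_2^m$, completing this step.

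Finally, I would extend to general $a\in\qt{\theta}$ and $\xi\in\HeisenbergMod{\theta}{p,q,d}$ by density and continuity: the operator $\alpha_{\eth,d}^{x,y,u}$ is an isometry by Lemma \ref{alpha-isometry-lemma}, the automorphism $\beta_\theta^{\upsilon_\eth(x,y)}$ is isometric on $\qt{\theta}$, the module action is jointly continuous (with $\|a\xi\|_{\HeisenbergMod{\theta}{p,q,d}} \leq \|a\|_{\qt{\theta}}\|\xi\|_{\HeisenbergMod{\theta}{p,q,d}}$), and both sides are linear in $a$ and $\xi$, so the identity passes from $\ell^1(\Z^2)\times\module{S}_\theta^{p,q,d}$ to all of $\qt{\theta}\times\HeisenbergMod{\theta}{p,q,d}$.

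The only delicate point is the direct commutation calculation: one must track the Heisenberg cocycle carefully, since $\alpha_{\eth,1}$ is an anti-homomorphism (i.e., $\alpha_{\eth,1}^{g}\alpha_{\eth,1}^{h} = \alpha_{\eth,1}^{hg}$) under the Heisenberg group law as written in Notation \ref{Schrodinger-notation}, which is exactly what produces the sign giving $z_1^n z_2^m$ rather than its conjugate. Doing the computation pointwise in $s\in\R$ avoids any ambiguity and matches the sign already fixed by Lemma \ref{alpha-beta-lemma}.
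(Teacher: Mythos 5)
Your proposal is correct and follows essentially the same route as the paper: reduce by linearity, density, and continuity to $a=\delta_{n,m}$ acting on Schwarz vectors, use the tensor decomposition of $\varpi_{p,q,\eth,d}^{n,m}$, and commute $\alpha_{\eth,d}^{x,y,u}$ past $\sigma_{\eth,1}^{n,m}=\alpha_{\eth,1}^{n,m,\frac{nm}{2}}$ at the cost of the central phase $\exp(2i\pi\eth(yn-xm))=z_1^n z_2^m$, which is exactly $\beta_\theta^{\upsilon_\eth(x,y)}$ on $\delta_{n,m}$. The only cosmetic difference is that you verify the commutation relation by an explicit pointwise computation (and correctly flag the ordering subtlety in the Heisenberg group law), where the paper simply asserts the same phase identity.
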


\begin{proof}

Let $n,m \in \Z$ and $\xi \in \module{S}_\theta^{p,q,d}$ and $f_{m,m} \in \ell^1(\Z^2)$ be defined by:
\begin{equation*}
f_{n,m} : (z,w) \in \Z^2 \longmapsto \begin{cases}
1 \text{ if $n=z$ and $m=w$,}\\
0 \text{ otherwise.}
\end{cases}
\end{equation*}
We compute:

\begin{equation*}
\begin{split}
\alpha_{\eth,d}^{x,y,u}(f_{n,m} \xi) &= \alpha_{\eth,d}^{x,y,u}\varpi_{p,q,\eth,d}^{n,m}\xi \\
&= \left(\alpha_{\eth,d}^{x,y,u}\alpha_{\eth,d}^{n,m,\frac{n m}{2}} \otimes \rho_{p,q,d}^{[n],[m]}\right) \xi \\
&= \exp(2i\pi\eth (y n - x m)) \left(\alpha_{\eth,d}^{n,m,\frac{n m}{2}} \alpha_{\eth,d}^{x,y,u} \otimes\rho_{p,q,\eth,d}^{[n],[m]}\right) \xi \\
&= \exp(2i\pi \eth(y n - x m)) \varpi_{p,q,\eth,d}^{n,m} \alpha_{\eth,d} ^{x,y,u}\xi \\
&= \beta_\theta^{\upsilon_\eth(x,y)}(f_{n,m}) \alpha_{\eth,d}^{x,y,u} \xi \text{.}
\end{split}
\end{equation*}

Since $\beta_\theta$ is an action by *-morphisms, we conclude that for all $a\in\qt{\theta}$:
\begin{equation}\label{alpha-morphism-eq1}
\alpha_{\eth,d}^{x,y,u}(a \xi) = \beta_\theta^{\upsilon_\eth(x,y)}(a) \alpha_{\eth,d}^{x,y,u}(\xi)
\end{equation}
as desired. The lemma is concluded by extending Equality (\ref{alpha-morphism-eq1}) to $\HeisenbergMod{\theta}{p,q,d}$ by continuity.
\end{proof}

An important corollary of Lemma (\ref{alpha-morphism-lemma}) is as follows:

\begin{corollary}\label{alpha-norm-cor}
We assume Hypothesis (\ref{hyp-1}).  For all $a\in\qt{\theta}$, $\xi \in \HeisenbergMod{\theta}{p,q,d}$ and $(x,y,u) \in \HeisenbergGroup$, we observe that:
\begin{equation*}
\left\|\alpha_{\eth,d}^{x,y,u}(a\xi)\right\|_{\HeisenbergMod{\theta}{p,q,d}} \leq \|a\|_{\qt{\theta}}\|\xi\|_{\HeisenbergMod{\theta}{p,q,d}} \text{.}
\end{equation*}
\end{corollary}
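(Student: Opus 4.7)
The plan is to combine the two immediately preceding results in a one-line computation. By Lemma (\ref{alpha-morphism-lemma}), we have $\alpha_{\eth,d}^{x,y,u}(a\xi) = \beta_\theta^{\upsilon_\eth(x,y)}(a)\,\alpha_{\eth,d}^{x,y,u}(\xi)$, which rewrites the left-hand side as the norm of a module element on which $\qt{\theta}$ acts on the left. The standard Hilbert module inequality $\|b\eta\|_{\HeisenbergMod{\theta}{p,q,d}} \leq \|b\|_{\qt{\theta}} \|\eta\|_{\HeisenbergMod{\theta}{p,q,d}}$ (valid by construction for any C*-Hilbert module) applied with $b = \beta_\theta^{\upsilon_\eth(x,y)}(a)$ and $\eta = \alpha_{\eth,d}^{x,y,u}(\xi)$ then yields
\begin{equation*}
\left\|\alpha_{\eth,d}^{x,y,u}(a\xi)\right\|_{\HeisenbergMod{\theta}{p,q,d}} \leq \left\|\beta_\theta^{\upsilon_\eth(x,y)}(a)\right\|_{\qt{\theta}} \left\|\alpha_{\eth,d}^{x,y,u}(\xi)\right\|_{\HeisenbergMod{\theta}{p,q,d}}\text{.}
\end{equation*}

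To finish, I would invoke two isometry facts: $\beta_\theta^{\upsilon_\eth(x,y)}$ is a $\ast$-automorphism of $\qt{\theta}$ by Theorem-Definition (\ref{dual-action-thm}), hence isometric, giving $\|\beta_\theta^{\upsilon_\eth(x,y)}(a)\|_{\qt{\theta}} = \|a\|_{\qt{\theta}}$; and $\alpha_{\eth,d}^{x,y,u}$ is an isometry of $(\HeisenbergMod{\theta}{p,q,d},\|\cdot\|_{\HeisenbergMod{\theta}{p,q,d}})$ by Lemma (\ref{alpha-isometry-lemma}), giving $\|\alpha_{\eth,d}^{x,y,u}(\xi)\|_{\HeisenbergMod{\theta}{p,q,d}} = \|\xi\|_{\HeisenbergMod{\theta}{p,q,d}}$. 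Substituting produces the claimed inequality.

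There is essentially no obstacle here: this is a direct corollary in the literal sense, extracting a norm estimate from the covariance relation of Lemma (\ref{alpha-morphism-lemma}) together with the fact that both the Hilbert module action and the two group actions involved are norm-compatible. The only point that merits a word is that the inequality $\|b\eta\| \leq \|b\|\,\|\eta\|$ for the action of $\qt{\theta}$ on $\HeisenbergMod{\theta}{p,q,d}$ holds on all of the completed module $\HeisenbergMod{\theta}{p,q,d}$, not just on $\module{S}_\theta^{p,q,d}$, which is automatic from the C*-Hilbert module structure established in Theorem-Definition (\ref{HeisenbergMod-thm}).
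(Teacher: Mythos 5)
Your proof is correct and follows essentially the same route as the paper: rewrite via the covariance relation of Lemma (\ref{alpha-morphism-lemma}), apply the C*-Hilbert module inequality, and conclude using the isometry of $\alpha_{\eth,d}^{x,y,u}$ from Lemma (\ref{alpha-isometry-lemma}) together with the fact that $\beta_\theta^{\upsilon_\eth(x,y)}$ is a *-automorphism. If anything, you are slightly more explicit than the paper, which leaves the final step $\|\beta_\theta^{\upsilon_\eth(x,y)}(a)\|_{\qt{\theta}}=\|a\|_{\qt{\theta}}$ unstated.
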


\begin{proof}
Let $a\in\qt{\theta}$, $\xi \in \HeisenbergMod{\theta}{p,q,d}$ and $(x,y,u) \in \HeisenbergGroup$. We compute:
\begin{equation*}
\begin{split}
\left\|\alpha_{\eth,d}^{x,y,u}(a\xi)\right\|_{\HeisenbergMod{\theta}{p,q,d}} &= \left\|\beta_\theta^{\upsilon_\eth(x,y)} (a) \alpha_{\eth,d}^{x,y,u} \xi \right\|_{\HeisenbergMod{\theta}{p,q,d}} \text{ by Lemma (\ref{alpha-morphism-lemma}),}\\
&\leq \|\beta_\theta^{\upsilon_\eth(x,y)} a\|_{\qt{\theta}} \|\xi\|_{\HeisenbergMod{\theta}{p,q,d}} \text{ by Lemma (\ref{alpha-isometry-lemma}).}
\end{split}
\end{equation*}
This completes our proof.
\end{proof}

We have checked that the actions of the Heisenberg group on Heisenberg modules, which the latter were constructed from, act by isometric module morphisms on the entire module. Note that we already observed that Heisenberg modules can be regarded as dense subspaces of $L^2(\R)\otimes\C^d$ spaces on which the same action of the Heisenberg group is defined, strongly continuous and isometric; however we needed to ensure that these actions are well-behaved with respect to the inner product and norm of the Heisenberg modules. 

In order to define our D-norms, we shall require one more important analytic property: we want our actions to be strongly continuous for the Heisenberg $C^\ast$-Hilbert norms. This is the subject of the next proposition. We actually include in the next proposition a somewhat more general hypothesis and estimate than needed for the strong continuity of our actions, as this stronger statement will play an important role in our study of the continuity properties of our D-norms later on in \cite{Latremoliere18a}.

\begin{proposition}\label{strongly-continuous-prop}
Let $p\in\Z$, $q\in \N\setminus\{0\}$ and $d \in q\N$ with $d > 0$. Let $C > 0$ and $M > 0$ some constant. Let $0 < \eth_- < \eth_+$. There exists $K > 0$ such that for all $\xi \in \module{S}(\C^d)$ satisfying:
\begin{multline}\label{strongly-continuous-prop-eq0}
\max\left\{ \|\xi(s)\|_{\C^d}, \|s\xi(s)\|_{\C^d}, \|\xi'(s)\|_{\C^d}, \|s\xi'(s)\|_{\C^d}, \right.\\
\left. \|\xi''(s)\|_{\C^d}, \|s\xi''(s)\|_{\C^d} \right\} \leq \frac{M}{1 + s^2}\text{,}
\end{multline}
the following holds for all $s\in \R$, $\eth \in [\eth_-,\eth_+]$ and $(x,y,u)\in\R^3$ with $|x|+|y|+|u| \leq C$:
\begin{equation}\label{strongly-continuous-eq1}
\max\left\{ \left\| \alpha_{\eth,d}^{x,y,u}\xi^{(n)}(s) - \xi^{(n)} (s)\right\|_{\C^d} : n \in \{0,1,2\}  \right\} \leq \frac{K(|x|+|y|+|u|)}{1 + s^2} \text{.}
\end{equation}
In particular, for all $\eth \not= 0$ and $\theta = \eth + \frac{p}{q}$:
\begin{equation*}
\lim_{(x,y,u) \rightarrow 0} \left\| \alpha_{\eth,d}^{x,y,u}\xi - \xi\right\|_{\HeisenbergMod{\theta}{p,q,d}} = 0 \text{.}
\end{equation*}
\end{proposition}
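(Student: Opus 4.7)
The plan is to first establish the pointwise bound (\ref{strongly-continuous-eq1}) by direct computation using the explicit Schr\"{o}dinger-type formula for the action $\alpha_{\eth,d}^{x,y,u}$, and then to deduce the strong continuity statement as an application of Proposition \ref{module-norm-continuity-prop}.

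For the pointwise bound, I would use that $\alpha_{\eth,d}^{x,y,u}\xi^{(n)}(s) = \exp(2i\pi(\eth u + sx))\,\xi^{(n)}(s+\eth y)$ and decompose the error into a translation piece and a phase piece, namely
\[
\alpha_{\eth,d}^{x,y,u}\xi^{(n)}(s) - \xi^{(n)}(s) = \exp(2i\pi(\eth u + sx))\bigl[\xi^{(n)}(s+\eth y) - \xi^{(n)}(s)\bigr] + \bigl[\exp(2i\pi(\eth u + sx)) - 1\bigr]\xi^{(n)}(s)\text{.}
\]
For the phase piece, the elementary inequality $|e^{i\tau}-1|\leq|\tau|$ gives $|\exp(2i\pi(\eth u + sx))-1|\leq 2\pi(\eth_+|u|+|s||x|)$, and the two hypothesis bounds $\|\xi^{(n)}(s)\|_{\C^d}\leq M/(1+s^2)$ and $\|s\xi^{(n)}(s)\|_{\C^d}\leq M/(1+s^2)$ then yield a contribution of the form $2\pi M(\eth_+|u|+|x|)/(1+s^2)$, so the role of the weighted hypothesis $\|s\xi^{(n)}\|$ is exactly to absorb the factor of $|s|$ coming from the phase. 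For the translation piece, I would apply the fundamental theorem of calculus to write $\xi^{(n)}(s+\eth y) - \xi^{(n)}(s) = \eth y\int_0^1 \xi^{(n+1)}(s+r\eth y)\,dr$, use the hypothesis bound on $\xi^{(n+1)}$, and finally use the elementary estimate $1/(1+(s+\tau)^2)\leq K'/(1+s^2)$ valid uniformly for $|\tau|\leq \eth_+C$ (which was already used in the proof of Lemma \ref{ldom-lemma}). This produces a bound of the form $\eth_+|y|\cdot K'M/(1+s^2)$, and combining the two contributions gives (\ref{strongly-continuous-eq1}) with $K$ depending only on $M$, $\eth_+$, and $C$.

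The main obstacle is the case $n=2$ of the translation piece: the fundamental theorem approach naturally wants to use $\xi'''$, which is not controlled by the hypothesis (\ref{strongly-continuous-prop-eq0}). I would handle this either by a slightly different regrouping in which $\xi''(s+\eth y)-\xi''(s)$ is absorbed into lower-order terms using the additional smoothness of Schwarz functions, or, since the ultimate goal is the strong continuity conclusion for a fixed $\xi\in\mathcal{S}(\C^d)$, by observing that for such fixed $\xi$ all derivatives are rapidly decreasing and the constant $K$ may be allowed to depend on $\xi$ beyond $M$ at this stage; either way the estimate on $\alpha_{\eth,d}^{x,y,u}\xi''(s) - \xi''(s)$ of the required form can be secured. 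This is the delicate point of the argument and where most of the care is needed.

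For the strong continuity conclusion, I would fix $\xi \in \mathcal{S}(\C^d)$ and a sequence $(x_k,y_k,u_k)_{k\in\N}\to 0$ in $\R^3$, which we may assume is contained in a bounded region $\{|x|+|y|+|u|\leq C\}$. Setting $\zeta_k = \alpha_{\eth,d}^{x_k,y_k,u_k}\xi - \xi$ for $k\in\N$ and $\zeta_\infty = 0$, I would apply Proposition \ref{module-norm-continuity-prop} with $\theta_k = \theta$ constant. Joint continuity of $(k,t)\mapsto\zeta_k(t)$ on $\Nbar\times\R$ follows from continuity in $(x,y,u)$ and $t$ of the explicit formula together with the pointwise convergence $\zeta_k(t)\to 0$. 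The uniform bound $\|\zeta_k^{(j)}(t)\|_{\C^d}\leq M'/(1+t^2)$ for $j\in\{0,1,2\}$ follows from (\ref{strongly-continuous-eq1}) applied to $\xi,\xi',\xi''$, once one expands the $s$-derivatives of $\alpha_{\eth,d}^{x,y,u}\xi$ by the Leibniz rule (which produces only bounded multiples of $\alpha_{\eth,d}^{x,y,u}\xi^{(j)}$ with coefficients at most $|x|\leq C$). Proposition \ref{module-norm-continuity-prop} then gives $\|\zeta_k\|_{\HeisenbergMod{\theta}{p,q,d}}\to 0$, i.e.\ the claimed limit.
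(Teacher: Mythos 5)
Your proposal is correct and follows essentially the same route as the paper: the pointwise estimate is obtained by the fundamental theorem of calculus (the paper integrates $t\mapsto\alpha_{\eth,d}^{tx,ty,tu}\xi(s)$ along the one-parameter path, which produces exactly your phase term $2i\pi(\eth u+xs)\xi(s+\eth ty)$ and your translation term $\eth y\,\xi'(s+\eth ty)$ in a single integrand, so your phase/translation split is only a cosmetic regrouping), the weighted hypotheses $\|s\xi^{(n)}(s)\|_{\C^d}\leq M/(1+s^2)$ are used precisely to absorb the factor $|s|$ from the phase, and the uniform-in-$|\tau|\leq\eth_+C$ comparison $1/(1+(s+\tau)^2)\lesssim 1/(1+s^2)$ is exactly the function $\mathrm{b}$ from Lemma (\ref{ldom-lemma}). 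For the norm convergence the paper applies Lemma (\ref{l1-cont-lemma}) directly to $\alpha_{\eth,d}^{x_k,y_k,u_k}\xi-\xi$ with limit $0$, whereas you route through Proposition (\ref{module-norm-continuity-prop}) with $\theta_k$ constant; since that proposition is itself a consequence of the lemma, this is the same argument. The one substantive point you raise --- that the $n=2$ case of (\ref{strongly-continuous-eq1}) wants a bound on $\xi'''$, which Hypothesis (\ref{strongly-continuous-prop-eq0}) does not supply --- is a legitimate observation that applies equally to the paper's own proof, which asserts without comment that the computation ``may be applied equally well to $\xi'$ and $\xi''$''; your remark that for the strong-continuity conclusion one works with a fixed Schwartz $\xi$ and may enlarge the constant is the honest way to close this for the ``in particular'' statement, though neither your argument nor the paper's fully delivers the uniform constant $K(M,C,\eth_\pm)$ for $n=2$ as literally stated.
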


\begin{proof}
Let $\xi \in \mathcal{S}(\C^d)$ and $(x,y,u) \in \R^3$. We note that for all $s\in\R$, using the continuity of $\xi$, we of course have:
\begin{equation*}
\begin{split}
\alpha_{\eth,d}^{x,y,u}\xi(s) - \xi (s) &= \exp(2i\pi (u + xs))\xi(s + \eth y) - \xi(s)\\
&\xrightarrow{(x,y,u)\rightarrow 0} 0 \text{.}
\end{split}
\end{equation*}

However, we wish to apply Lemma (\ref{l1-cont-lemma}) to obtain convergence in norm, so we seek a more precise estimate. To this end, let:
\begin{equation*}
f_s(t) = \alpha_{\eth,d}^{tx,ty,tu}\xi(s) = \exp(2i\pi(\eth tu + txs))\xi(s + \eth ty)
\end{equation*}
for all $t,s\in\R$. We compute for all $t,s\in\R$:
\begin{equation*}
\begin{split}
f_s'(t) = \exp(2i\pi(\eth tu + txs))\left(2i\pi (\eth u + xs) \xi(s + \eth ty) +  \eth y \xi'(s+\eth ty) \right) \text{.}
\end{split}
\end{equation*}

Let $\|(x,y,u)\|_1 = |x| + |y| + |u|$ for all $(x,y,u)\in\R^2$, i.e. $\|\cdot\|_1$ is the usual $1$-norm on $\R^3$. Let us now assume $\|(x,y,u)\|_1 \leq C$ --- in particular, $|y| < C$. We observe that for all $s\in \R$, using the function $\mathrm{b}$ introduced in Expression (\ref{b-eq}) in the proof of Lemma (\ref{ldom-lemma}):
\begin{multline*}
\left\| \alpha_{\eth,d}^{x,y,u}\xi(s) - \xi(s)\right\|_{\C^d} \\
\begin{split}
&= \left\| f_s(1) - f_s(0) \right\|_{\C^d} = \left\| \int_0^1 f_s'(t)\, dt \right\|_{\C^d} \\
&\leq \int_0^1 \left\| \exp(2i\pi(\eth tu + txs))\left(2i\pi (\eth u + xs) \xi(s + \eth ty) +  \eth y \xi'(s+\eth ty) \right) \right\|_{\C^d} \, dt \\
&=\int_0^1 \left\| 2i\pi (\eth u + xs) \xi(s + \eth ty) +  \eth y \xi'(s+\eth ty) \right\|_{\C^d} \, dt \\
&\leq \int_0^1 \|(u,x,y)\|_1\max\left\{\begin{array}{l}
\left\|2i\pi \eth \xi(s + \eth ty)\right\|_{\C^d} \text{,}\\
\left\|2i\pi s \xi(s + \eth ty)\right\|_{\C^d} \text{,}\\
\left\|\eth\xi'(s + \eth ty)\right\|_{\C^d}
\end{array} \right\} \,dt\\
&\leq 2\pi \max\{1,\eth_+\}\|(x,y,u)\|_1 \int_0^1 \mathrm{b}(s,ty) \,dt \\
&\leq 2\pi \max\{1,\eth_+\} \|(x,y,u)\|_1 \left(\sup_{y \in [-C,C]}\mathrm{b}(s,y)\right)\text{.}
\end{split}
\end{multline*}

Since:
\begin{equation}\label{strongly-continuous-pf-eq1}
\lim_{s\rightarrow\pm\infty} (1 + s^2)\sup_{y \in [-C,C]}\mathrm{b}(s,y) = M\text{,}
\end{equation}
we conclude that there exists $R > 0$ such that for all $s\in\R\setminus[-R,R]$, we have:
\begin{equation*}
\|\alpha_{\eth,d}^{x,y,u}\xi(s) - \xi(s)\|_{\C^d} \leq \frac{M_1 \|(x,y,u)\|_1}{1 + s^2}
\end{equation*}
for $M_1 = 4 M \pi \max\{1,\eth_+\}$.
We note that $M_1$ depends only on $M$, $\eth_+$ and $C$ through Expression (\ref{strongly-continuous-pf-eq1}), and not on $\xi$.

Since $s\in\R \mapsto \frac{1}{1+s^2}$ is continuous and strictly positive, we may adjust $M_1$ to a larger value if necessary such that:
\begin{equation*}
\min_{s\in [-R,R]} \frac{M_1}{1 + s^2} \geq 2\pi M\max\{1,\eth_+\} \text{.}
\end{equation*}

Therefore, we have, for all $s\in \R$ and $(x,y,u) \in \R^3$ with $\|(x,y,u)\|_1 \leq C$:
\begin{equation*}
\|\alpha_{\eth,d}^{x,y,u}\xi(s) - \xi(s)\|_{\C^d} \leq \frac{M_1 \|(x,y,u)\|_1}{1 + s^2} \leq \frac{M_1  C}{1 + s^2} \text{.}
\end{equation*}

Now, all the above computations may be applied equally well to $\xi'$ and $\xi''$. We conclude that indeed, Expression (\ref{strongly-continuous-eq1}) holds as stated.

Let now $\xi \in \mathcal{S}\otimes\C^d$ be chosen. Since $\xi$ is a Schwarz function, there exists $M > 0$ such that for all $s\in \R$, we have:
\begin{multline*}
\max\left\{ \|\xi(s)\|_{\C^d}, \|s\xi(s)\|_{\C^d}, \|\xi'(s)\|_{\C^d}, \|s\xi'(s)\|_{\C^d}, \|\xi''(s)\|_{\C^d}, \|s\xi''(s)\|_{\C^d} \right\} \\ \leq \frac{M}{1 + s^2}\text{.}
\end{multline*}
Thus we can apply our previous work to conclude that Expression (\ref{strongly-continuous-eq1}) holds for some $K > 0$, having chosen $C = 1$ for this last part of our proof.

Furthermore, we can apply now Lemma (\ref{l1-cont-lemma}). For this part, we pick $\eth > 0$; we need  not to worry about the uniformity in $\eth$ (we may as well assume $\eth_- = \eth_+ = \eth$ here). Thus, if $(x_n,y_n,u_n)_{n\in\N}$ converges to $0$, Lemma (\ref{l1-cont-lemma}) implies that:
\begin{equation*}
\begin{cases}
0 \leq \|\alpha_{\eth,d}^{x_n,y_n,u_n}\xi - \xi\|_{\HeisenbergMod{\theta}{p,q,d}} \leq \sqrt{\|\inner{\alpha_{\eth,d}^{x_n,y_n,u_n}\xi - \xi}{\alpha_{\eth,d}^{x_n,y_n,u_n}\xi - \xi}{\HeisenbergMod{\theta}{p,q,d}}\|_{\ell_1(\Z^2)}} \\
\lim_{n\rightarrow\infty} \sqrt{\|\inner{\alpha_{\eth,d}^{x_n,y_n,u_n}\xi - \xi}{\alpha_{\eth,d}^{x_n,y_n,u_n}\xi - \xi}{\HeisenbergMod{\theta}{p,q,d}}\|_{\ell_1(\Z^2)}} = 0
\end{cases}
\end{equation*}
which concludes the proof of our proposition for $\eth > 0$.

To prove our result for a general $\eth \not= 0$, we simply observe that for all $(x,y,u) \in \R^3$ we have $\alpha_{\eth,d}^{x,y,u} = \alpha_{-\eth,d}^{x,-y,-u}$ and thus our proposition is completely proven.
\end{proof}

We wish to use the actions of $\HeisenbergGroup$ on Heisenberg modules to define our D-norms. The next section presents a general source of possible D-norms from actions of Lie groups satisfying the properties we have established in this section.

\section{Seminorms from Lie group actions}

Connes introduced a quantized differential calculus on quantum tori in \cite{Connes80} using the dual action of the tori, using the Lie group structure of the tori. Moreover, he introduced a noncommutative connection on Heisenberg modules, and these connections proved to be solutions of the Yang-Mills problem for quantum $2$-tori \cite{ConnesRieffel87}. These connections were also useful in Rieffel's work on the classification of modules over quantum tori \cite{Rieffel83}.

Moreover, ergodic actions of metric compact groups on C*-algebras were the first example of L-seminorms constructed by Rieffel in \cite{Rieffel98a}. In this section, we begin investigating how to build D-norms from Lie group actions. We will employ as assumptions the properties which we derived for the action of the Heisenberg group on Heisenberg modules. Our construction, as we shall see, lies at the intersection of the purely metric picture of Rieffel and the differential picture of Connes, and is a noncommutative version of \cite[Example 3.10]{Latremoliere16c}.

Our D-norm will be constructed using the following theorem.

\begin{definition}\label{alpha-differentiable-def}
Let $\alpha$ be a strongly continuous action of a Lie group $G$ on a Banach space $\module{E}$. Let $\mathfrak{w}$ be a nonzero subspace of the Lie algebra of $G$. An element $\xi\in\module{E}$ is \emph{$\alpha$-differentiable with respect to $\mathfrak{w}$} when for all $X\in\mathfrak{w}$, the limit:
\begin{equation*}
X(\xi) = \lim_{t\rightarrow 0} \frac{\alpha^{\exp(t X)}\xi - \xi}{t}
\end{equation*}
exists.
\end{definition}

In any vector space $E$, and for any function $f: E\rightarrow\R$, we denote as usual:
\begin{equation*}
\limsup_{x \rightarrow 0} f(x) = \inf_{\delta>0} \sup\left\{ f(x) : 0 < \|x\| \leq \delta \right\} \text{.}
\end{equation*}

\begin{theorem}\label{seminorm-connection-thm}
Let $\alpha$ be a strongly continuous action by linear isometries of a Lie group $G$ on a Banach space $\module{E}$. Let $\mathfrak{g}$ be the Lie algebra of $G$ and let $\mathfrak{h} \subseteq \mathfrak{g}$ be a nonzero subspace of $\mathfrak{g}$.

Let $\module{S}\subseteq \module{E}$ be the subspace of $\module{E}$ consisting of $\alpha$-differentiable elements of $\module{E}$ with respect to $\mathfrak{h}$. We note that $\module{S}$ is dense in $\module{E}$.

Let $\|\cdot\|$ be a norm on $\mathfrak{h}$. For all $\xi \in \module{S}$, the norm of the linear map:
\begin{equation*}
\nabla\xi : X \in \mathfrak{h} \mapsto \nabla_X \xi = X(\xi) 
\end{equation*}
is denoted by $\opnorm{\nabla \xi}{}{}$.

If $\xi \in \module{S}$, then, for any $\delta > 0$:
\begin{equation*}
\begin{split}
\opnorm{\nabla \xi}{}{} &=\sup\left\{\frac{\left\|\alpha^{\exp(X)}\xi - \xi\right\|_{\module{E}}}{\|X\|} : X \in \mathfrak{h}\setminus\{0\} \right\} \\ 
&=\sup\left\{\frac{\left\|\alpha^{\exp(X)}\xi - \xi\right\|_{\module{E}}}{\|X\|} : X \in \mathfrak{h}\setminus\{0\}, \|X\| \leq \delta \right\}\\
&=\limsup_{X \rightarrow 0} \frac{\left\|\alpha^{\exp(X)}\xi - \xi\right\|_{\module{E}}}{\|X\|}
\text{.}
\end{split}
\end{equation*}

\end{theorem}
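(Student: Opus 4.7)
The plan is to establish the chain of inequalities
\begin{equation*}
\opnorm{\nabla \xi}{}{} \;\leq\; \limsup_{X \to 0} \frac{\left\|\alpha^{\exp(X)}\xi - \xi\right\|_{\module{E}}}{\|X\|} \;\leq\; \sup_{0<\|X\|\leq\delta} \frac{\left\|\alpha^{\exp(X)}\xi - \xi\right\|_{\module{E}}}{\|X\|} \;\leq\; \sup_{X\neq 0} \frac{\left\|\alpha^{\exp(X)}\xi - \xi\right\|_{\module{E}}}{\|X\|} \;\leq\; \opnorm{\nabla\xi}{}{}\text{,}
\end{equation*}
so that all intermediate quantities coincide with $\opnorm{\nabla\xi}{}{}$. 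The middle two inequalities are immediate from the definition of $\limsup$ and from monotonicity in $\delta$, so the real content is the first and last inequalities.

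For the last (upper) inequality, I would exploit that, for fixed $X\in\mathfrak{h}$, the map $t\in\R\mapsto \alpha^{\exp(tX)}\xi$ is a $C^1$ curve in $\module{E}$ whose derivative is $\alpha^{\exp(tX)}\nabla_X\xi$; this is because $\exp((t+h)X) = \exp(tX)\exp(hX)$ reduces the derivative at any $t$ to the derivative at $0$, and the latter exists by the assumption $\xi\in\module{S}$. Strong continuity of $\alpha$ makes $t\mapsto \alpha^{\exp(tX)}\nabla_X\xi$ continuous, so the Bochner fundamental theorem of calculus gives
\begin{equation*}
\alpha^{\exp(X)}\xi - \xi = \int_0^1 \alpha^{\exp(tX)} \nabla_X\xi \,dt\text{.}
\end{equation*}
Since each $\alpha^{\exp(tX)}$ is an isometry, one obtains $\|\alpha^{\exp(X)}\xi-\xi\|_{\module{E}} \leq \|\nabla_X\xi\|_{\module{E}} \leq \opnorm{\nabla\xi}{}{}\|X\|$, which is exactly the top inequality.

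For the first (lower) inequality, I would take any nonzero $X\in\mathfrak{h}$ and observe that, by definition of $\nabla_X\xi$,
\begin{equation*}
\frac{\|\nabla_X\xi\|_{\module{E}}}{\|X\|} = \lim_{t\to 0^+} \frac{\|\alpha^{\exp(tX)}\xi - \xi\|_{\module{E}}}{\|tX\|}\text{.}
\end{equation*}
Since $tX \to 0$ in $\mathfrak{h}$ as $t\to 0^+$, the right-hand side is bounded above by $\limsup_{Y\to 0} \|\alpha^{\exp(Y)}\xi-\xi\|_{\module{E}}/\|Y\|$. Taking the supremum over $X\neq 0$ then yields $\opnorm{\nabla\xi}{}{} \leq \limsup_{Y\to 0} \|\alpha^{\exp(Y)}\xi-\xi\|_{\module{E}}/\|Y\|$, completing the chain.

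For the side remark that $\module{S}$ is dense in $\module{E}$, I would use the standard smoothing argument: pick a compactly supported smooth approximate identity $(f_n)_{n\in\N}$ on $G$, form the Bochner integrals $\alpha^{f_n}\xi = \int_G f_n(g)\alpha^g\xi\,dg$, verify by an easy change of variable that each $\alpha^{f_n}\xi$ is smooth along all of $\mathfrak{g}$ (in particular $\mathfrak{h}$-differentiable), and note that strong continuity of $\alpha$ forces $\alpha^{f_n}\xi \to \xi$. The main conceptual obstacle in the theorem is ensuring that the one-parameter-group identity $\exp((t+h)X)=\exp(tX)\exp(hX)$ really does transport the derivative from $t=0$ to all $t$ (so that the fundamental theorem of calculus applies), which is why the argument works along $\mathfrak{h}$-directions individually even though $\exp$ on the whole Lie algebra is not a group homomorphism.
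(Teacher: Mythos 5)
Your proposal is correct and follows essentially the same route as the paper: the upper bound comes from writing $\alpha^{\exp(X)}\xi - \xi = \int_0^1 \alpha^{\exp(tX)}\nabla_X\xi\,dt$ via the one-parameter subgroup identity and the isometry hypothesis, and the lower bound comes from the definitional limit $\nabla_X\xi = \lim_{t\to 0}\bigl(\alpha^{\exp(tX)}\xi-\xi\bigr)/t$ together with the smoothing argument for density. The only difference is cosmetic: you close a single cyclic chain of inequalities through the $\limsup$, whereas the paper proves the two bounds separately and then observes that the $\delta$-restricted supremum is constant in $\delta$.
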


\begin{proof}
A smoothing argument \cite{Bratteli79} proves that the set:
\begin{equation*}
\left\{ \xi \in \module{E} : t > 0 \mapsto \frac{\alpha^{\exp(tX)}\xi - \xi}{t} \text{ has a limit at $0$ for all $X\in \mathfrak{g}$} \right\}
\end{equation*}
is dense in $\module{E}$. Therefore, since $\module{S}$ contains this set, $\module{S}$ is dense in $\module{E}$ as well.

Fix $\xi \in \module{S}$. Let $X \in \mathfrak{h}$. We define:
\begin{equation*}
F : t \in \R \mapsto \alpha^{\exp(tX)}\xi \text{.}
\end{equation*}

The function $F$ is continuously differentiable, and in particular, $F(0) = \xi$ and $F(1) = \alpha^{\exp(X)}\xi$. 

Moreover, using the fact that $t\in\R\mapsto \exp(tX)$ is a continuous group homomorphism:
\begin{equation*}
F'(t) = \lim_{s\rightarrow 0} \frac{\alpha^{\exp((t + s)X)} \xi - \alpha^{\exp(tX)}\xi}{h} = \lim_{s\rightarrow 0} \frac{\alpha^{\exp(tX)}\left(\alpha^{\exp(h X)} \xi - \xi\right)}{h} = \alpha^{\exp(tX)}\nabla_X \xi \text{.}
\end{equation*}

Thus:
\begin{equation*}
\alpha^{\exp(X)}\xi - \xi = \int_0^1 F'(t) \, dt = \int_0^1 \alpha^{\exp(tX)}\left(\nabla_X \xi \right) \,dt
\end{equation*}
so that:
\begin{equation*}
\begin{split}
\frac{\left\|\alpha^{\exp(X)}\xi - \xi\right\|_{\module{E}}}{\|X\|} &= \frac{\left\|\int_0^1 F'(t) \, dt\right\|_{\module{E}}}{\|X\|} \\
&\leq \frac{1}{\|X\|}\int_0^1 \left\|\alpha^{\exp(X)}\left(\nabla_X\xi\right)\right\|_{\module{E}} \,dt\\
&= \frac{1}{\|X\|} \int_0^1 \left\|\nabla_X\xi\right\|_{\module{E}} \,dt\text{ since $\alpha^{\exp(tX)}$ is an isometry by hypothesis,}\\
&\leq \frac{1}{\|X\|} \int_0^1 \opnorm{\nabla \xi}{}{} \|X\| \, dt = \opnorm{\nabla\xi}{}{} \text{.}
\end{split}
\end{equation*}
This proves that:
\begin{equation*}
\sup\left\{\frac{\left\|\alpha^{\exp(X)}\xi - \xi\right\|_{\module{E}}}{\|X\|} : X \in \mathfrak{h}\setminus\{0\} \right\} \leq \opnorm{\nabla\xi}{}{} \text{.}
\end{equation*}

On the other hand, let us now fix some $\delta > 0$. let us now assume that $\|X\| = 1$. We first note that:
\begin{equation*}
\begin{split}
\nabla_X\xi &= F'(0)\\
&= \lim_{t\rightarrow 0} \frac{F(t) - F(0)}{t} \text{ where $\lim$ is used for the topology of $(\module{E},\|\cdot\|_{\module{E}})$,} \\
&= \lim_{t\rightarrow 0} \frac{\alpha^{\exp(tX)}\xi - \xi}{t\|X\|} = \lim_{t\rightarrow 0} \frac{\alpha^{\exp(tX)}\xi - \xi}{\|tX\|} \text{.}
\end{split}
\end{equation*}
Thus for all $X\in\mathfrak{h}$ with $\|X\|=1$, since $\|tX\| \leq \delta$ for all $t\in\R$ with $|t| < \delta$:
\begin{equation*}
\left\|\nabla_X\xi\right\| \leq \sup\left\{\frac{\left\|\alpha^{\exp(Y)}\xi - \xi\right\|_{\module{E}}}{\|Y\|} : Y \in \mathfrak{h}\setminus\{0\}, \|Y\|\leq \delta \right\}
\end{equation*}
and thus:
\begin{equation*}
\begin{split}
\opnorm{\nabla \xi}{}{} &\leq \sup\left\{\frac{\left\|\alpha^{\exp(X)}\xi - \xi\right\|_{\module{E}}}{\|X\|} : X \in \mathfrak{h}\setminus\{0\},\|X\|\leq\delta \right\} \\
&\leq \sup\left\{\frac{\left\|\alpha^{\exp(X)}\xi - \xi\right\|_{\module{E}}}{\|X\|} : X \in \mathfrak{h}\setminus\{0\} \right\}\text{.}
\end{split}
\end{equation*}
We have thus concluded our argument, as the function:
\begin{equation*}
\delta \in (0,\infty) \mapsto \sup\left\{\frac{\left\|\alpha^{\exp(X)}\xi - \xi\right\|_{\module{E}}}{\|X\|} : X \in \mathfrak{h}\setminus\{0\},\|X\|\leq\delta \right\}
\end{equation*}
has been shown to be constant.
\end{proof}

We note that the seminorms constructed in Theorem (\ref{seminorm-connection-thm}) include Rieffel's L-seminorms in \cite{Rieffel98a} from actions of compact Lie groups.

\begin{corollary}\label{compact-seminorm-connection-cor}
Let $\alpha$ be a strongly continuous action by linear isometries of a compact connected Lie group $G$ on a Banach space $\module{E}$. As a compact Lie group, $G$ admits an $\mathrm{Ad}$-invariant inner product $\inner{\cdot}{\cdot}{\mathfrak{g}}$ on $\mathfrak{g}$. Let $\|\cdot\|$ be the norm associated with $\inner{\cdot}{\cdot}{\mathfrak{g}}$. For any $g \in G$, since $G$ is connected and compact, we may define $\ell(g)$ as the distance from $1_G$ to $g$ for the Riemannian metric induced by $\inner{\cdot}{\cdot}{\mathfrak{g}}$. 

If $\xi \in \module{S}$ then:
\begin{equation*}
\sup\left\{ \frac{\left\|\alpha^g\xi - \xi\right\|_{\module{E}}}{\ell(g)} : g \in G\setminus\{1_G\} \right\} = \opnorm{\nabla \xi}{}{} \text{.}
\end{equation*}
\end{corollary}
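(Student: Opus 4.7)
The plan is to reduce the Riemannian-distance formulation to the one-parameter-subgroup formula already established in Theorem \ref{seminorm-connection-thm} (applied with $\mathfrak{h} = \mathfrak{g}$), by invoking three classical facts about a compact connected Lie group $G$ equipped with the bi-invariant Riemannian metric arising from $\inner{\cdot}{\cdot}{\mathfrak{g}}$: (i) each one-parameter subgroup $t \mapsto \exp(tX)$ is a geodesic through $1_G$ of constant speed $\|X\|$; (ii) by Hopf--Rinow, for every $g \in G$ there exists $X \in \mathfrak{g}$ with $\exp(X) = g$ and $\|X\| = \ell(g)$; (iii) the exponential map is a local Riemannian isometry at $0 \in \mathfrak{g}$, so $\ell(\exp(X)) = \|X\|$ for all sufficiently small $X$.

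For the inequality ``$\leq$'', given $g \in G \setminus \{1_G\}$, I would choose $X \in \mathfrak{g}$ as in fact (ii), so that $\exp(X) = g$ and $\|X\| = \ell(g)$. Then repeating verbatim the integration-along-the-curve argument from the proof of Theorem \ref{seminorm-connection-thm}, applied to the curve $F : t \mapsto \alpha^{\exp(tX)}\xi$ and using that each $\alpha^{\exp(tX)}$ is an isometry of $\module{E}$, yields
\[
\left\|\alpha^g \xi - \xi\right\|_{\module{E}} \leq \|X\| \cdot \opnorm{\nabla\xi}{}{} = \ell(g) \cdot \opnorm{\nabla\xi}{}{}.
\]
Dividing by $\ell(g) > 0$ and taking the supremum over $g \in G \setminus \{1_G\}$ gives the desired upper bound.

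For the inequality ``$\geq$'', I would pick $\delta > 0$ inside the injectivity radius of $\exp$ at $1_G$, so that $\ell(\exp(X)) = \|X\|$ whenever $0 < \|X\| \leq \delta$ (fact (iii)). For any such $X$, setting $g = \exp(X)$, we have
\[
\frac{\left\|\alpha^g \xi - \xi\right\|_{\module{E}}}{\ell(g)} = \frac{\left\|\alpha^{\exp(X)}\xi - \xi\right\|_{\module{E}}}{\|X\|},
\]
so the supremum in the corollary dominates
\[
\sup\left\{\frac{\left\|\alpha^{\exp(X)}\xi - \xi\right\|_{\module{E}}}{\|X\|} : X \in \mathfrak{g}\setminus\{0\},\ \|X\| \leq \delta \right\},
\]
and this last quantity equals $\opnorm{\nabla\xi}{}{}$ by the $\delta$-restricted equality in Theorem \ref{seminorm-connection-thm}.

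No real analytic difficulty should arise; the substance is packaged in the three Riemannian-geometric facts above, all of which are standard consequences of bi-invariance of the metric on a compact connected Lie group and can simply be cited. The only care needed is to match the speed of the geodesic $t \mapsto \exp(tX)$ with $\|X\|$ and to shrink $\delta$ until $\exp$ is a radial isometry on the closed ball of radius $\delta$ in $\mathfrak{g}$.
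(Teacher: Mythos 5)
Your proposal is correct and follows essentially the same route as the paper: both reduce the Riemannian-distance formulation to Theorem (\ref{seminorm-connection-thm}) via the identification of the Lie and Riemannian exponential maps for the bi-invariant metric, Hopf--Rinow surjectivity giving a minimizing $X$ with $\exp(X)=g$ and $\|X\|=\ell(g)$, and the local radial isometry of $\exp$ combined with the $\delta$-restricted equality for the reverse inequality. Your write-up is in fact somewhat more explicit about the two inequalities than the paper's, which compresses this into a citation of the theorem after recording the same geometric facts.
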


\begin{proof}
As $G$ is a compact group, it admits a right Haar probability measure $\mu$. Let $\inner{\cdot}{\cdot}{}$ be any inner product on $\mathfrak{g}$. If we set, for all $X,Y \in \mathfrak{g}$:
\begin{equation*}
\inner{X}{Y}{G} = \int_G \inner{\mathrm{Ad}_g X}{\mathrm{Ad}_g Y}{} \,d\mu(g)
\end{equation*}
then one easily verifies that $\inner{\cdot}{\cdot}{G}$ is an $\mathrm{Ad}$-invariant inner product on $\mathfrak{g}$.

Now, we endow $G$ with the Riemannian metric induced by left translation of the inner product $\inner{\cdot}{\cdot}{G}$. As this metric is induced by an $\mathrm{Ad}$-invariant inner product, it is in fact right invariant as well.

In particular, $G$, as a connected compact Riemannian manifold, is geodesically complete by Hopf-Rinow theorem. As a first application, we let $\ell(g)$ be the distance from $1_G$ to $g$ in $G$ for this Riemannian metric, for all $g\in G$. As a second application, we note that the Riemannian exponential map of $G$ for our metric is indeed surjective.

It is now possible to check that the exponential map for the Lie group $G$ and the exponential map for the Riemannian metric coincide. This is done by checking that the Riemannian exponential map defines a $1$-parameter subgroup of $G$.

With this in mind, we conclude that for all $X\in \mathfrak{g}$, we have:
\begin{equation*}
\ell(\exp(X)) = \inf\left\{ \|Y\| : \exp(X) = \exp(Y) \right\} \text{.}
\end{equation*}
We note that the Lie exponential map is certainly not injective, at least as long as $G$ is of dimension at least one, though this does not affect our conclusion.

Moreover, since $G$ is a compact connected Lie group, $\exp$ is surjective since the Riemannian exponential is surjective. Thus, our corollary is proven using Theorem (\ref{seminorm-connection-thm}). 
\end{proof}

Now, Rieffel proved in \cite{Rieffel98a} that the obvious necessary condition for a seminorm of the type given in Corollary (\ref{compact-seminorm-connection-cor}) to be a L-seminorm is, remarkably, sufficient as well. This fact is highly non-trivial as well, and we record it here as it will be the source of quantum metrics we put on quantum tori.

\begin{theorem}[{\cite[Theorem 1.9]{Rieffel98a}}]\label{Rieffel-L-seminorm-thm}
Let $\beta$ be a strongly continuous group action by *-automorphisms of a compact group $G$ on a unital C*-algebra $\A$. Let $\ell$ be a continuous length function on $G$. For all $a\in\A$, we define:
\begin{equation*}
\Lip(a) = \sup\left\{ \frac{\|\beta^g(a) - a\|_\A}{\ell(g)} : g\in G\setminus\{e\} \right\}\text{,}
\end{equation*}
allowing for this quantity to be infinite. Then the following are equivalent:
\begin{enumerate}
\item $(\A,\Lip)$ is a quantum compact metric space (which is necessarily Leibniz),
\item $\{a\in\A : \forall g \in G \quad \beta^g(a) = a\} = \C\unit_\A$.
\end{enumerate}
\end{theorem}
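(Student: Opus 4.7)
The plan is to prove the two implications separately, with (1) $\Rightarrow$ (2) following directly from the definition of a Lqcms and (2) $\Rightarrow$ (1) requiring verification of all four axioms, where the precompactness condition of Theorem (\ref{az-thm}) is the main obstacle. For (1) $\Rightarrow$ (2), since $\ell$ is a length function on $G$, we have $\ell(g) > 0$ for all $g \neq e$, so $\Lip(a) = 0$ if and only if $\beta^g(a) = a$ for every $g \in G$; that is, the kernel of $\Lip$ coincides with the fixed-point subalgebra $\A^G$. If $(\A, \Lip)$ is a Lqcms, this kernel on $\sa{\A}$ is $\R\unit_\A$, and decomposing any element of $\A^G$ into its self-adjoint and anti-self-adjoint parts (each still $\beta$-invariant) gives $\A^G = \C\unit_\A$.

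For (2) $\Rightarrow$ (1), the Leibniz inequalities follow from the identity
\[ \beta^g(ab) - ab = (\beta^g(a) - a)\beta^g(b) + a(\beta^g(b) - b), \]
together with the fact that each $\beta^g$ is an isometric $\ast$-automorphism; dividing by $\ell(g)$ and taking the supremum over $g \neq e$ yields the Leibniz bound for the ordinary product, and hence also for the Jordan and Lie products. Lower semi-continuity of $\Lip$ follows since it is a pointwise supremum of norm-continuous seminorms, and the kernel condition holds by (2) as above. To show that $\Kantorovich{\Lip}$ metrizes the weak-$\ast$ topology, I invoke Theorem (\ref{az-thm}): hypothesis (2) implies that the Haar-averaging operator $E(a) = \int_G \beta^g(a)\,d\mu(g)$ lands in $\A^G = \C\unit_\A$, defining the unique $G$-invariant state $\tau$ by $E(a) = \tau(a)\unit_\A$. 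Then for $a \in \sa{\A}$ with $\Lip(a) \leq 1$,
\[ \|a - \tau(a)\unit_\A\|_\A \leq \int_G \|\beta^g(a) - a\|_\A\,d\mu(g) \leq \int_G \ell(g)\,d\mu(g) =: D < \infty \text{,} \]
which uniformly bounds $|\varphi(a) - \psi(a)|$ by $2D$ for all states $\varphi,\psi$, giving finite $\Kantorovich{\Lip}$-diameter.

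The hard part is to show that the set $\{a \in \sa{\A} : \Lip(a) \leq 1, \|a\|_\A \leq 1\}$ is norm-precompact. The key structural observation is that the family of orbit maps $g \mapsto \beta^g(a)$, as $a$ ranges over this set, is equicontinuous from $G$ to $\A$, since $\|\beta^g(a) - \beta^h(a)\|_\A = \|\beta^{h^{-1}g}(a) - a\|_\A \leq \ell(h^{-1}g)\Lip(a) \leq \ell(h^{-1}g)$. The strategy to convert this equicontinuity into norm-precompactness of the set itself is to introduce, for each $\varepsilon > 0$, a continuous nonnegative mollifier $f_\varepsilon$ on $G$ with $\int f_\varepsilon\,d\mu = 1$ supported in a sufficiently small neighborhood of $e$ so that the convolution operator $T_{f_\varepsilon}(a) := \int_G f_\varepsilon(g)\beta^g(a)\,d\mu(g)$ approximates $a$ within $\varepsilon$ uniformly on the Lip-unit ball (with error at most $\Lip(a)\int_G f_\varepsilon(g)\ell(g)\,d\mu(g)$). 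Finite approximation of $T_{f_\varepsilon}$ on the norm-unit ball is then obtained via the Peter–Weyl decomposition of $\beta$ into $G$-isotypical components, truncated at a finite level whose remainder is controlled by $f_\varepsilon$. Combining the two approximations yields finite $\varepsilon$-nets for the Lip-unit ball, which is Rieffel's original argument; this spectral step is the principal technical difficulty.
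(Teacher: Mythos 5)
First, a point of context: the paper does not prove this statement at all --- it is quoted verbatim from \cite[Theorem 1.9]{Rieffel98a}, and the only commentary the paper offers is the remark immediately following it, which singles out the one ingredient your sketch leaves as a black box. So the comparison here is really between your proposal and Rieffel's original argument, which your outline does follow faithfully: the implication (1) $\Rightarrow$ (2) via the kernel of $\Lip$, the Leibniz inequality from the cocycle-type identity for $\beta^g(ab)-ab$, the finite diameter via averaging against Haar measure and the unique invariant state, and precompactness via mollification plus a spectral truncation are all the right steps, correctly executed where you execute them.

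The genuine gap is the step you defer to at the very end. To conclude that the truncation of $T_{f_\varepsilon}$ at a finite set of irreducible representations of $G$ is a \emph{finite-rank} operator, you need to know that each $G$-isotypical component (spectral subspace) of $\A$ under an ergodic action is finite dimensional. This is not a consequence of Peter--Weyl; Peter--Weyl only decomposes $\A$ into isotypical pieces and says nothing about their size. The finite dimensionality is precisely the theorem of H{\o}egh-Krohn, Landstad and St{\o}rmer (\cite{Hoegh-Krohn81}, cited in the paper right after the statement), it is where hypothesis (2) enters the precompactness argument in an essential way, and it fails for non-compact groups --- which is exactly why the present paper cannot reuse this argument for the Heisenberg group action and must instead prove compactness of the D-norm unit ball by entirely different means (Laguerre/C{\'e}saro approximation by finite-rank operators). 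As written, your proof of the precompactness direction therefore rests on an unproved and non-trivial assertion. Two smaller omissions: you never verify that $\dom{\Lip}$ is dense in $\sa{\A}$ (needed both for the definition of a {\Lqcms} and to invoke Theorem (\ref{az-thm})); the standard fix is to observe that $T_{f}(a)$ has finite $\Lip$ whenever $f$ is $\ell$-Lipschitz, so your own mollifiers supply the dense domain. And you should note explicitly that the bound $\int_G \ell\,d\mu < \infty$ uses continuity of $\ell$ on the \emph{compact} group $G$.
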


We note that the proof of Theorem (\ref{Rieffel-L-seminorm-thm}) involves explicitly the fact that the spectral subspaces of the action $\beta$ are finite dimensional under the condition of ergodicity \cite{Hoegh-Krohn81}. This result is not trivial, and worse yet for our purpose, does not carry to locally compact group. In fact, besides the trivial representation, no irreducible representation of the Heisenberg group is finite dimensional --- so we are as far as we can to apply the idea in \cite{Rieffel98a}. In this paper, we shall focus on the Heisenberg modules, and we will prove in this case that the seminorms constructed in Theorem (\ref{seminorm-connection-thm}) have compact unit balls using quite different techniques from Rieffel.

The rest of this section introduces the general scheme to construct D-norms from Lie group actions which we will employ in this paper, and prove that this construction meets all our requirements except, maybe, for the compactness of the unit ball which, in the case of Heisenberg modules, will be the subject  of our next section.

\begin{proposition}\label{Leibniz-prop}
Let $\beta$ be the action of a compact connected Lie group $G$ on a unital C*-algebra $\A$ via *-automorphisms. Let $\alpha$ be the action by isometric $\C$-linear isomorphisms of a Lie group $H$ on a Hilbert module $(\module{M},\inner{\cdot}{\cdot}{\module{M}})$ over $\A$. We write $\mathfrak{g}$ and $\mathfrak{h}$ the respective Lie algebras of $G$ and $H$, and $\exp_G : \mathfrak{g}\rightarrow G$ and $\exp_H : \mathfrak{h}\rightarrow H$ be the respective Lie exponential maps of $G$ and $H$.

Let $\mathfrak{w}$ be a nonzero subspace of $\mathfrak{h}$. Let $\|\cdot\|_\flat$ be a norm on $\mathfrak{g}$ and $\|\cdot\|^\sharp$ be a norm on $\mathfrak{w} \subseteq \mathfrak{h}$.

We set for all $a\in \A$:
\begin{equation*}
\Lip(a) = \sup\left\{\frac{\left\|\beta^{\exp(X)}a - a\right\|_\A}{\|X\|_\flat} : X \in \mathfrak{g} \setminus \{0\} \right\} \text{,}
\end{equation*}
and for all $\xi \in \module{E}$:
\begin{equation*}
\CDN(\xi) = \sup\left\{\|\xi\|_{\module{M}}, \frac{\left\|\alpha^{\exp(X)}\xi - \xi\right\|_{\module{M}}}{\|X\|^\sharp} : X \in \mathfrak{w} \setminus \{0\} \right\} \text{.}
\end{equation*}

If there exist two linear maps $j : \mathfrak{w} \rightarrow \mathfrak{g}$ and $q : \mathfrak{g} \rightarrow \mathfrak{w}$ such that:
\begin{enumerate}
\item for all $\xi,\omega\in\module{M}$ and $X\in\mathfrak{w}$:
\begin{equation}\label{compatible-actions-cond1}
\beta^{\exp_G(X)}\inner{\xi}{\omega}{\module{M}} = \inner{\alpha^{\exp_H(j(X))} \xi}{\alpha^{\exp_H(j(X))}\omega}{\module{E}}
\end{equation}
and:
\begin{equation}\label{compatible-actions-cond2}
\alpha^{\exp_H(X)}(a\xi) = \beta^{\exp_G(q(X))}(a)\alpha^{\exp_H(X)}\xi \text{,}
\end{equation}
\item $j$ is an isometry from $(\mathfrak{g},\|\cdot\|_\flat)$ to $(\mathfrak{w},\|\cdot\|^\sharp)$,
\item $q$ is a surjection of norm at most $1$, i.e. $\|q(X)\|_\flat \leq \|X\|^\sharp$ for all $X\in\mathfrak{w}$,
\end{enumerate}
then:
\begin{enumerate}
\item $\Lip$ is a seminorm on a dense subspace of $(\A,\|\cdot\|_\A)$, and moreover:
\begin{equation*}
\Lip(a) = 0 \iff \forall g\in G\quad \beta^g(a) = a \text{,}
\end{equation*}
\item $\CDN$ is a norm on a dense subspace of $(\module{M},\inner{\cdot}{\cdot}{\module{M}})$ and $\CDN(\cdot)\geq\|\cdot\|_{\module{M}}$,
\item $\Lip$ and $\CDN$ are lower semicontinuous,
\item for all $a\in\A$ and $\xi \in \module{M}$:
\begin{equation*}
\CDN(a\xi) \leq \|a\|_\A \CDN(\xi) + \Lip(a) \|\xi\|_{\module{M}}\text{,}
\end{equation*}
\item for all $\xi,\omega\in\module{M}$:
\begin{equation*}
\Lip\left(\inner{\xi}{\omega}{\module{M}}\right) \leq \|\xi\|_{\module{M}}\CDN(\omega) + \CDN(\xi)\|\omega\|_{\module{M}} \text{.}
\end{equation*}
\end{enumerate}
\end{proposition}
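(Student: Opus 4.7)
The plan is to verify the five conclusions one at a time, with the main work being essentially algebraic manipulations of the compatibility conditions (\ref{compatible-actions-cond1}) and (\ref{compatible-actions-cond2}) combined with the norm estimates coming from $j$ being an isometry and $q$ a contraction. Theorem (\ref{seminorm-connection-thm}) gives both density of the relevant domains and the fact that $\Lip$ and $\CDN$ can be rewritten as operator norms of differentials along the respective Lie algebras; the subadditivity and absolute homogeneity of both seminorms then follow formally because $\alpha^{\exp_H(X)}$ and $\beta^{\exp_G(X)}$ are linear isometries. Lower semicontinuity is immediate: each seminorm is the supremum of a family of continuous functions of $a$ or $\xi$ (for fixed $X$), so both are sup of continuous seminorms, hence lower semicontinuous in their respective ambient norm topologies. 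The bound $\CDN(\xi) \geq \|\xi\|_{\module{M}}$ is built into the definition of $\CDN$, and the characterization $\Lip(a)=0\iff\beta^g(a)=a$ for all $g\in G$ uses that $\exp_G$ is surjective onto the connected compact Lie group $G$.

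For the inner Leibniz inequality, I would fix $a\in\A$, $\xi\in\module{M}$, and $X\in\mathfrak{w}\setminus\{0\}$, then use condition (\ref{compatible-actions-cond2}) to write
\begin{align*}
\alpha^{\exp_H(X)}(a\xi) - a\xi
&= \beta^{\exp_G(q(X))}(a)\,\alpha^{\exp_H(X)}\xi - a\xi \\
&= \beta^{\exp_G(q(X))}(a)\bigl(\alpha^{\exp_H(X)}\xi - \xi\bigr)
+ \bigl(\beta^{\exp_G(q(X))}(a) - a\bigr)\xi.
\end{align*}
Taking $\|\cdot\|_{\module{M}}$, using that $\beta^{\exp_G(q(X))}$ is a $\ast$-automorphism (so isometric on $\A$), dividing by $\|X\|^\sharp$, and invoking $\|q(X)\|_\flat \leq \|X\|^\sharp$, the first term is bounded by $\|a\|_\A \frac{\|\alpha^{\exp_H(X)}\xi - \xi\|_{\module{M}}}{\|X\|^\sharp} \leq \|a\|_\A \CDN(\xi)$ and the second by $\Lip(a)\|\xi\|_{\module{M}}$. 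Combined with the trivial estimate $\|a\xi\|_{\module{M}} \leq \|a\|_\A \CDN(\xi)$, taking the sup defining $\CDN(a\xi)$ yields the desired inner Leibniz inequality.

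For the modular Leibniz inequality, I would fix $\xi,\omega\in\module{M}$ and $X\in\mathfrak{g}\setminus\{0\}$, then apply the \emph{other} compatibility condition (\ref{compatible-actions-cond1}) to compute
\begin{align*}
\beta^{\exp_G(X)}\inner{\xi}{\omega}{\module{M}} - \inner{\xi}{\omega}{\module{M}}
&= \inner{\alpha^{\exp_H(j(X))}\xi}{\alpha^{\exp_H(j(X))}\omega}{\module{M}} - \inner{\xi}{\omega}{\module{M}} \\
&= \inner{\alpha^{\exp_H(j(X))}\xi}{\alpha^{\exp_H(j(X))}\omega - \omega}{\module{M}} \\
&\quad + \inner{\alpha^{\exp_H(j(X))}\xi - \xi}{\omega}{\module{M}}.
\end{align*}
Using Cauchy--Schwarz in the Hilbert module $(\module{M},\inner{\cdot}{\cdot}{\module{M}})$ together with the fact that $\alpha^{\exp_H(j(X))}$ is an isometry of $\module{M}$, the $\A$-norm of this difference is at most $\|\xi\|_{\module{M}}\|\alpha^{\exp_H(j(X))}\omega - \omega\|_{\module{M}} + \|\alpha^{\exp_H(j(X))}\xi - \xi\|_{\module{M}}\|\omega\|_{\module{M}}$. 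Dividing by $\|X\|_\flat = \|j(X)\|^\sharp$ (here we use that $j$ is an isometry) and taking the supremum over $X$ produces exactly $\|\xi\|_{\module{M}}\CDN(\omega) + \CDN(\xi)\|\omega\|_{\module{M}}$.

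The only mildly delicate point is density of the domain of $\CDN$ inside $\module{M}$: one must note that the set of $\alpha$-differentiable vectors with respect to $\mathfrak{w}\subseteq\mathfrak{h}$ is dense, which follows from Theorem (\ref{seminorm-connection-thm}) (or directly from a smoothing argument against a compactly supported smooth measure on $H$), and similarly for $\Lip$. Nothing in the argument is specifically hard: the crux is simply choosing the right ``add and subtract'' splitting and applying the two hypotheses (\ref{compatible-actions-cond1}) and (\ref{compatible-actions-cond2}) at the right moments, together with the norm conditions on $j$ and $q$ to translate between the scales of $\|\cdot\|_\flat$ and $\|\cdot\|^\sharp$.
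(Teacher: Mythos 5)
Your proposal is correct and follows essentially the same route as the paper's proof: density and the operator-norm characterization via Theorem (\ref{seminorm-connection-thm}), lower semicontinuity as a supremum of continuous functions, surjectivity of $\exp_G$ for the kernel characterization, and the same add-and-subtract decompositions for the two Leibniz inequalities (your splitting $\beta^{\exp_G(q(X))}(a)(\alpha^{\exp_H(X)}\xi-\xi)+(\beta^{\exp_G(q(X))}(a)-a)\xi$ differs only cosmetically from the paper's $(\beta^{\exp_G(q(X))}(a)-a)\alpha^{\exp_H(X)}\xi + a(\alpha^{\exp_H(X)}\xi-\xi)$, and both exploit $\|q(X)\|_\flat\leq\|X\|^\sharp$ and the isometry of the actions in the same way).
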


\begin{proof}
Let $\module{S}_{\mathfrak{g}}(\A)$ be the subspace of $\A$ consisting of all the $\beta$-differentiable elements with respect to $\mathfrak{g}$, and $\module{S}_{\mathfrak{h}}(\module{M})$ be the subspace of $\module{M}$ consisting of all the $\alpha$-differentiable elements of $\module{M}$ with respect to $\mathfrak{w}$.

For any $a\in \module{S}_{\mathfrak{g}}(\A)$, we define the linear map $\partial a : X \in \mathfrak{g} \mapsto X(a)$ whose norm is denoted by $\opnorm{\partial a}{\mathfrak{g}}{\A}$, where $\mathfrak{g}$ is endowed with $\|\cdot\|_\flat$. Since $\mathfrak{g}$ is finite dimensional, $\partial a$ is continuous and thus has finite norm for all $a\in\module{S}_{\mathfrak{g}}(\A)$.

For any $\xi \in \module{S}_{\mathfrak{w}}(\module{M})$, we also define $\nabla \xi : X \in \mathfrak{w} \mapsto X(\xi)$ whose norm is $\opnorm{\nabla \xi}{\mathfrak{w}}{\module{M}}$ where $\mathfrak{w}$ is endowed by $\|\cdot\|^\sharp$ --- since $\mathfrak{w}$ is finite dimensional, the norm of $\nabla \xi$ is finite as well.

By Theorem (\ref{seminorm-connection-thm}), for all $a\in \module{S}_{\mathfrak{g}}(\A)$ and for all $\xi \in \module{S}_{\mathfrak{w}}(\module{M})$, then:
\begin{equation*}
\Lip(a) = \opnorm{\partial a}{\mathfrak{g}}{\A} < \infty \text{ and }\CDN(\xi) = \opnorm{\nabla \xi}{\mathfrak{w}}{\module{M}} < \infty\text{.}
\end{equation*}
Since $\module{S}_{\mathfrak{g}}(\A)$ and $\module{S}_{\mathfrak{w}}(\module{E})$ are dense, we conclude that the domains of $\Lip$ and $\CDN$ are indeed dense.

\bigskip

Since $\CDN(\cdot) \geq \|\cdot\|_{\module{M}}$ by construction, $\CDN$ is in particular a norm on its domain.

\bigskip

Moreover if $\Lip(a) = 0$ for some $a\in\A$, we immediately conclude that $\beta^{g}a = a$ for all $g \in G$ since the exponential map of $G$ is surjective.

\bigskip

The function $\xi \in \module{M} \mapsto \frac{\alpha^{\exp(X)}\xi - \xi}{\|X\|^\sharp}$ is continuous for all $X \in \mathfrak{w} \setminus\{0\}$ and thus $\CDN$ is lower semi-continuous as the pointwise supremum of continuous functions. The same reasoning and conclusion applies to $\Lip$.

\bigskip

We are left to prove the two forms of the Leibniz inequalities, which can be easily checked by direct computation. Let $\xi,\omega \in \module{M}$. We compute:
\begin{align*}
\Lip\left(\inner{\xi}{\omega}{\module{E}}\right) &= \sup\left\{ \frac{\left\| \beta^{\exp(X)}\inner{\xi}{\omega}{\module{E}} - \inner{\xi}{\omega}{\module{E}}\right\|_\A}{\|X\|_\flat} : X \in \mathfrak{g}\setminus\{0\} \right\} \\
&=\sup\left\{ \frac{\left\| \inner{\alpha^{\exp(j(X))}\xi}{\alpha^{\exp(j(X))}\omega}{\module{E}} - \inner{\xi}{\omega}{\module{E}}\right\|_\A}{\|j(X)\|^\sharp} : X \in \mathfrak{g}\setminus\{0\} \right\} \\
&\leq \sup\left\{ \frac{\left\| \inner{\alpha^{\exp(X)}\xi}{\alpha^{\exp(X)}\omega}{\module{E}} - \inner{\xi}{\omega}{\module{E}}\right\|_\A}{\|X\|^\sharp} : X \in \mathfrak{w}\setminus\{0\} \right\} \\
&\leq \sup\left\{ \frac{\left\| \inner{\alpha^{\exp(X)}\xi}{\alpha^{\exp(X)}\omega}{\module{E}} - \inner{\alpha^{\exp(X)}\xi}{\omega}{\module{E}}\right\|_\A}{\|X\|^\sharp} : X \in \mathfrak{w}\setminus\{0\} \right\} \\
&\quad + \sup\left\{ \frac{\left\| \inner{\alpha^{\exp(X)}\xi}{\omega}{\module{E}} - \inner{\xi}{\omega}{\module{E}}\right\|_\A}{\|X\|^\sharp} : X \in \mathfrak{w}\setminus\{0\} \right\} \\
&\leq \sup\left\{ \frac{\left\|\alpha^{\exp(X)}\xi\right\|_{\module{M}}\left\|\alpha^{\exp(X)}\omega-\omega\right\|_{\module{E}}}{\|X\|^\sharp} : X \in \mathfrak{w}\setminus\{0\} \right\} \\
&\quad + \sup\left\{ \frac{\left\|\alpha^{\exp(X)}\xi - \xi\right\|_{\module{E}}}{\|X\|^\sharp} : X \in \mathfrak{w}\setminus\{0\} \right\}\left\|\omega\right\|_{\module{M}}\\
&\leq \left\|\xi\right\|_{\module{M}} \sup\left\{ \frac{\left\|\alpha^{\exp(X)}\omega-\omega\right\|_{\module{E}}}{\|X\|^\sharp} : X \in \mathfrak{w}\setminus\{0\} \right\} \\
&\quad + \sup\left\{ \frac{\left\|\alpha^{\exp(X)}\xi - \xi\right\|_{\module{E}}}{\|X\|^\sharp} : X \in \mathfrak{w}\setminus\{0\} \right\}\left\|\omega\right\|_{\module{M}}\\
&= \left\|\xi\right\|_{\module{M}}\CDN(\omega) + \CDN(\xi)\left\|\omega\right\|_{\module{M}} \text{.} 
\end{align*}

Now, let $a\in\A$ and $\xi \in \module{M}$. We compute:
\begin{align*}
\sup&\,\left\{\frac{\left\|\alpha^{\exp(X)}\left(a\xi\right) - a\xi \right\|_{\module{M}}}{\|X\|^\sharp} : X \in \mathfrak{w}\setminus\{0\} \right\} \\
&= \sup\left\{\frac{\left\|\beta^{\exp(q(X))}(a) \alpha^{\exp(X)}\left(\xi\right) - a\xi \right\|_{\module{M}}}{\|X\|^\sharp} : X \in \mathfrak{w}\setminus\{0\} \right\} \\
&\leq \sup\left\{\frac{\left\|\beta^{\exp(q(X))}(a) \alpha^{\exp(X)}\left(\xi\right) - a\alpha^{\exp(X)}\xi \right\|_{\module{M}}}{\|q(X)\|_\flat} : X \in \mathfrak{w}\setminus\ker q \right\} \\
& \quad + \sup\left\{\frac{\left\|a \alpha^{\exp(X)}\left(\xi\right) - a\xi \right\|_{\module{M}}}{\|X\|^\sharp} : X \in \mathfrak{w}\setminus\{0\} \right\} \\
&\leq \sup\left\{\frac{\left\|\beta^{\exp(q(X))}(a) - a \right\|_{\module{M}}}{\|X\|_\flat} : X \in \mathfrak{g}\setminus\{0\} \right\} \|\xi\|_{\module{M}} + \|a\|_\A \CDN(\xi) \\
&= \Lip(a)\|\xi\|_{\module{M}} + \|a\|_\A\CDN(\xi)\text{,}
\end{align*}
as desired.
\end{proof}

Thus, Proposition (\ref{Leibniz-prop}) shows that if we follow the scheme suggested by Theorem (\ref{seminorm-connection-thm}), then we obtain potential D-norms on modules. The missing property is the compactness of the closed unit ball for the D-norm candidate.

We conclude our section by connecting our metric framework with the noncommutative differential framework of connections on modules. Let us use the notations of Proposition (\ref{Leibniz-prop}). A direct computation shows that for all $X \in \mathfrak{w}$, the following holds:
\begin{equation}\label{connection-modular-Leibniz-eq}
\nabla_X (a\xi) = q(X)a \cdot \xi + a \nabla_X \xi
\end{equation}
while for all $X \in \mathfrak{g}$, we also have:
\begin{equation}\label{connection-inner-Leibniz-eq}
X(\inner{\xi}{\omega}{\module{M}}) = \inner{j(X)\xi}{\omega}{\module{M}} + \inner{\xi}{j(X)\omega}{\module{M}}\text{.}
\end{equation}

We also denote $\A\otimes\mathfrak{g}^\ast$ by $\Omega_1$ and the space of $\beta$-differentiable elements of $\A$ by $\A_1$. We define $\partial : \A_1 \rightarrow \Omega_1$ by setting, for all $a\in\A_1$:
\begin{equation*}
\partial a : X \in \mathfrak{g} \mapsto X(a) \text{.}
\end{equation*}
We observe trivially that $\Omega_1$ is an $\A$-$\A$-bimodule and that $\partial$ is a derivation, i.e. $\partial(ab) = a\partial(b) + \partial(a)b$ for all $a,b \in \A_1$.

We first note that to get an interesting connection, we want $q$ to be injective, i.e. $\mathfrak{g}$ and $\mathfrak{w}$ to be isomorphic. It is always possible to increase the dimension of $\mathfrak{g}$ (the Lie algebra structure is actually not involved in the computations to follow, so this is always possible), but this would amount to define $\partial_X = 0$ for all vector $X$ not in $\mathfrak{g}$, and this is rather awkward and artificial.

Since, for the differential picture, the norms $\|\cdot\|_\flat$ and $\|\cdot\|^\sharp$ do not play a role in the construction of the connection, we will for now identify $\mathfrak{g}$ and $\mathfrak{w}$ and $j$ and $q$ with the identity map.

With this assumption, Expressions (\ref{connection-modular-Leibniz-eq}) translates to the operator $\nabla : \module{M} \rightarrow \module{M}\otimes\mathfrak{g}^\ast$, defined by:
\begin{equation*}
\nabla (\xi) : X \in \mathfrak{g} \mapsto \nabla_X \xi
\end{equation*}
for all $\alpha$-differentiable $\xi \in \module{M}$ with respect to $\mathfrak{g}$, to be a noncommutative connection. We indeed easily check that for all $a\in\A$ and $\xi\in\module{M}$:
\begin{equation*}
\nabla(a\xi) = a\nabla(\xi) + \partial(a)\xi\text{.}
\end{equation*}

Expression (\ref{connection-inner-Leibniz-eq}) means that the connection $\nabla$ is hermitian, i.e. it is compatible with the noncommutative equivalent of a metric on the quantum vector bundle $\module{M}$. It is tempting to call $\nabla$ a Levi-Civita connection, although we do not address here the computation of the torsion of $\nabla$. Nonetheless, we see that our structure provides a noncommutative Riemannian geometry. This is the structure which inspired our definition of {\gQVB}, and we now can see how it is implemented through our main example.

In summary, we have constructed a natural D-norm candidate on modules carrying certain Lie group actions. The key difficulty, of course, regards the compactness of the unit ball of such a D-norm.

\section{A D-norm from a connection on Heisenberg modules}

We now define our D-norms on Heisenberg modules. Our method employs the idea of Theorem (\ref{seminorm-connection-thm}) and Proposition (\ref{Leibniz-prop}), where the actions of the Heisenberg group on Heisenberg modules defines a norm which restricts to the operator norm of a connection constructed via the associated action of the Heisenberg Lie algebra.

As noted at the end of the previous section, we want to only work with a subspace of the Heisenberg Lie algebra to build our D-norm and its associated connection, since the central element of the Heisenberg Lie algebra does not act, so to speak, as a derivation --- it simply acts by multiplication by a scalar. We follow a pattern which is common in the literature on the Heisenberg group: we only consider the action of the subspace $\mathrm{span}\{P,Q\}$ in the Lie algebra $\mathfrak{H}$. 

We thus endow $\mathrm{span}\{P,Q\}$ with a norm. If we were to construct a metric on the Heisenberg group using this data --- by defining the length of a curve whose tangent vector at (almost) every point lies in $\mathrm{span}\{P,Q\}$ in the usual manner by integrating the norm of the tangent vector along the curve, and then defining the distance between two points as the infimum of the length of all so-called horizontal curves --- we would actually obtain a sub-Finslerian metric (if our choice of norm comes from a Hilbert space structure, we would have a sub-Riemannian structure and our construction would give rise to a Carnot-Carath{\'e}dory distance on the Heisenberg group). 

However, as discussed, we do not transport the Carnot-Carath{\'e}dory metric from the Heisenberg group via its action in this paper. We prefer to carry the norm of the subspace $\mathrm{span}\{P,Q\}$ of the Heisenberg Lie algebra to our modules. This approach means that we work with a connection, and seems more natural. In essence, the Carnot-Caratheodory is the metric obtained on the group while our D-norms are the quantum metrics obtained on our modules; as the acting group is not compact, we have no reason to expect them to agree.

With this in mind, we now introduce:

\begin{definition}\label{Heisenberg-dnorm-def}
Let $p\in\Z$, $q \in \N\setminus\{0\}$ and $d\in q\N$ with $d > 0$. Let $\theta\in\R\setminus\left\{\frac{p}{q}\right\}$. Let $\|\cdot\|$ be a norm on $\R^2$. We endow the Heisenberg module $\HeisenbergMod{\theta}{p,q,d}$ with the norm:
\begin{equation*}
\CDN_{\theta}^{p,q,d}(\xi) = \sup\left\{ \|\xi\|_{\HeisenbergMod{\theta}{p,q,d}}, \frac{\left\|\alpha_{\eth,d}^{\exp_{\HeisenbergGroup}(x P  + y Q)}\xi - \xi\right\|_{\HeisenbergMod{\theta}{p,q,d}}}{2\pi|\eth| \|(x,y)\|} : (x,y) \in \R^2\setminus\{0\} \right\}
\end{equation*}
where $\eth = \theta - \frac{p}{q}$.
\end{definition}

We now lighten our notation for the rest of our paper.

\begin{convention}
We endow $\R^2$ with a fixed norm $\|\cdot\|$ for the rest of this paper. We shall denote $\CDN_\theta^{\|\cdot\|,p,q,d}$ simply by $\CDN_\theta^{p,q,d}$, as the norm on $\R^2$ will not be understood. We emphasize that $\|\cdot\|$ is \emph{independent} of any of the parameters $p,q,d$ and $\theta$.

The norm $\|\cdot\|$ on $\R^2$ provides us with a continuous length function on $\qt{\theta}$ for all $\theta\in\R$. This length function arises from the invariant Finslerian metric induced by $\|\cdot\|$. A direct computation simply shows that:
\begin{equation*}
\ell(\exp(ix),\exp(iy)) = \inf\{ \|(x + 2 n \pi, y + 2 m \pi)\| : n,m \in \Z^2 \} \text{.}
\end{equation*}

For all $\theta\in\R$, we denote by $\Lip_\theta$ the L-seminorm on $\qt{\theta}$ associated with the action $\beta_\theta$ on $\qt{\theta}$ and the length function $\ell$ via \cite[Theorem 1.9]{Rieffel98a}. We note that since $\T^2$ is compact and Abelian, Corollary (\ref{compact-seminorm-connection-cor}) implies that for all $a\in\qt{\theta}$:
\begin{equation*}
\Lip_\theta(a) = \sup\left\{\frac{\|\beta_\theta^{\exp_{\T^2}(x,y)}\xi - \xi\|_{\qt{\theta}}}{\|(x,y)\|} : (x,y) \in \R^2\setminus\{0\} \right\}
\end{equation*}
and $\Lip_\theta$ agrees with the operator norm of derivative for the natural differential calculus defined by $\beta_\theta$ on $\beta_\theta$-differentiable elements. We refer to the previous section for a discussion of these matters.
\end{convention}

We begin by listing various equivalent expressions for our D-norm candidates, as we shall use whichever may prove useful in this paper.

\begin{remark}
We recall from Notation (\ref{Schrodinger-notation}) that:
\begin{equation*}
\exp_{\HeisenbergGroup}\left(x P + y Q\right) = \left(x, y, \frac{1}{2}xy \right)
\end{equation*}
for all $x,y \in \R$.

For all $p,q\in\N$, $d\in q\N$ with $d > 0$, $\theta\in\R\setminus\{pq^{-1}\}$ and $\xi \in \HeisenbergMod{\theta}{p,q,d}$, the following identities hold:
\begin{equation*}
\begin{split}
\CDN_\theta^{p,q,d}(\xi) &= \sup\left\{ \|\xi\|_{\HeisenbergMod{\theta}{p,q,d}}, \frac{\left\|\alpha_{\eth,d}^{x,y,\frac{1}{2}xy}\xi - \xi\right\|_{\HeisenbergMod{\theta}{p,q,d}}}{2\pi|\eth|\|(x,y)\|} : (x,y) \in \R^2\setminus\{0\} \right\} \\
&= \sup\left\{ \|\xi\|_{\HeisenbergMod{\theta}{p,q,d}}, \frac{\left\| \sigma_{\eth,d}^{x,y}\xi - \xi\right\|_{\HeisenbergMod{\theta}{p,q,d}}}{2\pi|\eth| \|(x,y)\|} : (x,y) \in \R^2\setminus\{0\} \right\} \text{.}
\end{split}
\end{equation*}
\end{remark}

\begin{proposition}\label{DLeibniz-prop}
Let $p,q \in \N$ and $d\in q\N$ with $d > 0$. Let $\theta\in\R\setminus\left\{\frac{p}{q}\right\}$. 

We endow $\mathrm{span}\{P,Q\}$ with the norm $2\pi|\eth|\|\cdot\|$. We also define, for all $(x,y) \in \R^2$ and $\xi\in\module{S}_\theta^{p,q,d}$:
\begin{equation*}
\begin{split}
\nabla^\eth_{x,y} \xi &= \lim_{t\rightarrow 0} \frac{\alpha_{\eth,d}^{\exp_{\HeisenbergGroup}\left(t\left(x P + y Q\right)\right)}\xi - \xi}{t}  \\
&= \lim_{t\rightarrow 0}\frac{\alpha_{\eth,d}^{tx,ty,\frac{1}{2} t^2 xy}\xi - \xi}{t}\text{.}
\end{split}
\end{equation*}

To ease notation, let $\opnorm{\cdot}{}{2\pi|\eth|}$ denote the operator norm for linear maps from $(\R^2,2\pi|\eth|\|\cdot\|)$ to $(\HeisenbergMod{\theta}{p,q,d},\|\cdot\|_{\HeisenbergMod{\theta}{p,q,d}})$.

We record:
\begin{enumerate}
\item $\CDN_\theta^{p,q,d}$ is a norm on a dense subspace of $\HeisenbergMod{\theta}{p,q,d}$,
\item For all $\xi \in \module{S}_\theta^{p,q,d}$ and for all $\delta > 0$, the following expressions hold:
\begin{equation*}
\begin{split}
\CDN_\theta^{p,q,d}(\xi) &= \max\left\{\|\xi\|_{\HeisenbergMod{\theta}{p,q,d}}, \opnorm{\nabla^\eth \xi}{}{2\pi|\eth|} \right\} \\
&= \sup\left\{ \frac{\|\sigma_{\eth,d}^{x,y}\xi - \xi\|_{\HeisenbergMod{\theta}{p,q,d}}}{2\pi|\eth|\|(x,y)\|} : (x,y) \in \R^2, 0 < \|(x,y)\| < \delta \right\} \\
&= \limsup_{(x,y)\rightarrow 0} \frac{\left\|\sigma_{\eth,d}^{x,y}\xi - \xi\right\|_{\HeisenbergMod{\theta}{p,q,d}}}{2\pi|\eth|\|(x,y)\|}\text{.}
\end{split}
\end{equation*}
\item If $a\in\qt{\theta}$ and $\xi \in \HeisenbergMod{\theta}{p,q,d}$ then:
\begin{equation*}
\CDN_\theta^{p,q,d}(a\xi) \leq \|a\|_{\qt{\theta}} \CDN_\theta^{p,q,d}(\xi) + \Lip_\theta(a) \|\xi\|_{\HeisenbergMod{\theta}{p,q,d}} \text{.}
\end{equation*}
\item  If $\xi,\omega \in \HeisenbergMod{\theta}{p,q,d}$ then:
\begin{equation*}
\Lip_\theta\left(\inner{\xi}{\omega}{\HeisenbergMod{\theta}{p,q,d}}\right) \leq \|\xi\|_{\HeisenbergMod{\theta}{p,q,d}} \CDN_\theta^{p,q,d}(\omega) + \CDN_\theta^{p,q,d}(\xi) \|\omega\|_{\HeisenbergMod{\theta}{p,q,d}} \text{.}
\end{equation*}
\end{enumerate}
\end{proposition}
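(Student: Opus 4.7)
The overall strategy is to apply Theorem \ref{seminorm-connection-thm} to recover the three equivalent formulations in claim (2), and then to specialize Proposition \ref{Leibniz-prop} to obtain claims (1), (3), and (4). The pivotal observation is that, by the Heisenberg exponential formula (\ref{Heisenberg-Exp-eq}), $\exp_{\HeisenbergGroup}(xP + yQ) = (x, y, \tfrac{xy}{2})$, so $\alpha_{\eth,d}^{\exp_{\HeisenbergGroup}(xP+yQ)} = \sigma_{\eth,d}^{x,y}$; this reconciles the $\alpha$-based and $\sigma$-based expressions appearing in the statement.

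For claim (2), I apply Theorem \ref{seminorm-connection-thm} to the Banach space $\HeisenbergMod{\theta}{p,q,d}$, the Lie group $\HeisenbergGroup$, the two-dimensional subspace $\mathfrak{w} = \mathrm{span}\{P,Q\}$ equipped with the norm $\|xP+yQ\|^\sharp = 2\pi|\eth|\|(x,y)\|$, and the action $\alpha_{\eth,d}$. The strong continuity hypothesis is supplied by Proposition \ref{strongly-continuous-prop}, and the isometric character by Lemma \ref{alpha-isometry-lemma}; pointwise differentiation combined with a dominated convergence argument in the Hilbert module norm (relying on the estimates of Lemma \ref{l1-cont-lemma}) shows that every Schwarz function in $\mathcal{S}(\C^d)$ lies in the space of $\alpha_{\eth,d}$-differentiable vectors with respect to $\mathfrak{w}$. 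The three claimed equalities then follow verbatim from Theorem \ref{seminorm-connection-thm}. Claim (1) is then a byproduct: the domain of $\CDN_\theta^{p,q,d}$ contains the dense subspace $\module{S}_\theta^{p,q,d}$, and since $\CDN_\theta^{p,q,d}(\cdot) \geq \|\cdot\|_{\HeisenbergMod{\theta}{p,q,d}}$ by definition, it is a genuine norm.

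For claims (3) and (4), I apply Proposition \ref{Leibniz-prop} with $G = \T^2$, $\beta = \beta_\theta$, $\mathfrak{g} = \R^2$ endowed with $\|\cdot\|_\flat = \|\cdot\|$, and with $H$, $\alpha$, $\mathfrak{w}$, $\|\cdot\|^\sharp$ as above. The linking maps are defined by $q : xP + yQ \in \mathfrak{w} \mapsto (2\pi\eth y, -2\pi\eth x) \in \mathfrak{g}$ and $j = q^{-1} : (s,t) \mapsto -\frac{t}{2\pi\eth}P + \frac{s}{2\pi\eth}Q$. Condition (\ref{compatible-actions-cond1}) is exactly the content of Lemma \ref{alpha-beta-lemma}, and condition (\ref{compatible-actions-cond2}) is Lemma \ref{alpha-morphism-lemma}, each read through the change of parameters encoded by $\upsilon_\eth$. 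The isometry hypothesis on $j$ reduces to requiring $\|\cdot\|$ on $\R^2$ to be invariant under the $90$-degree rotation $(x,y) \mapsto (-y,x)$, which holds routinely for the natural choices of norm (in particular for every $\ell^p$ norm). Conclusions (4) and (5) of Proposition \ref{Leibniz-prop} then deliver claims (3) and (4) of Proposition \ref{DLeibniz-prop}.

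The main technical obstacle is simply careful bookkeeping around this rotation twist: the dual-action parameters $(s,t) = (2\pi\eth y, -2\pi\eth x)$ that appear in $\upsilon_\eth(x,y) = \exp_{\T^2}(2\pi\eth y, -2\pi\eth x)$ are the rotated-and-rescaled versions of the $\alpha$-parameters $(x,y)$, and matching the two norms through this rotation with the correct $2\pi|\eth|$ factor is precisely what makes the isometry hypotheses of Proposition \ref{Leibniz-prop} hold. Once this matching is in place, the conclusion is essentially a direct substitution.
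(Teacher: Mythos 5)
Your proposal follows the paper's own proof essentially verbatim: the paper likewise derives assertion (2) from Theorem (\ref{seminorm-connection-thm}) and assertions (1), (3), (4) from Proposition (\ref{Leibniz-prop}), invoking Lemmas (\ref{alpha-beta-lemma}) and (\ref{alpha-morphism-lemma}) for the compatibility conditions (\ref{compatible-actions-cond1}) and (\ref{compatible-actions-cond2}) and endowing $\mathrm{span}\{P,Q\}$ with the norm $2\pi|\eth|\|\cdot\|$. One remark: your observation that the isometry hypothesis on $j$ and the norm-one hypothesis on $q$ together force $\|\cdot\|$ to be invariant under the rotation $(x,y)\mapsto(-y,x)$ is a genuine subtlety that the printed proof passes over --- it merely asserts, after rescaling by $2\pi|\eth|$, that one is ``in the setting of Proposition (\ref{Leibniz-prop})'', while the standing convention nominally permits an arbitrary norm on $\R^2$; your explicit flagging of this condition (satisfied by every $\ell^p$ norm, but not by a general norm) is, if anything, more careful than the paper's own argument.
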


\begin{proof}
The Lie algebra of $\T^2$ is $\R^2$ with the exponential map given as:
\begin{equation*}
\exp_{\T^2} : (x,y) \in \R^2 \mapsto (\exp(ix),\exp(iy))\text{.}
\end{equation*}

Now, the map $\upsilon_\eth : (x,y) \in \R^2 \mapsto (2i\pi \eth y, -2i\pi \eth x)$ satisfies, according to Lemma (\ref{alpha-beta-lemma}), the relation:
\begin{equation*}
\beta_\theta^{\exp_{\T^2}(\upsilon_\eth(x,y))}\inner{\xi}{\omega}{\HeisenbergMod{\theta}{p,q,d}} = \inner{\sigma_{\eth,d}^{\exp_\HeisenbergGroup(x,y)}\xi}{\sigma_{\eth,d}^{\exp_\HeisenbergGroup(x,y,0)}\omega}{\HeisenbergMod{\theta}{p,q,d}}\text{.}
\end{equation*}
and, according to Lemma (\ref{alpha-morphism-lemma}), the relation:
\begin{equation*}
\sigma_{\eth,d}^{\exp_{\HeisenbergGroup}(x,y)}(a\xi) = \beta_\theta^{\exp_{\T^2}(\upsilon_\eth(x,y))}(a) \sigma_{\eth,d}^{\exp_{\HeisenbergGroup}(x,y)}(\xi)\text{.}
\end{equation*}

In order to apply Proposition (\ref{Leibniz-prop}), since $\upsilon_\eth$ is indeed a linear isomorphism, we endow $\mathrm{span}\{P,Q\}$ with the norm:
\begin{equation*}
\|xP + yQ\|_\ast = 2\pi|\eth|\|(x,y)\| \text{.}
\end{equation*}
We now are in the setting of Proposition (\ref{Leibniz-prop}), which allows us to conclude all but Assertion (2) in our proposition. Assertion (2), in turn, follows from Theorem (\ref{seminorm-connection-thm}), with our choice of norm.
\end{proof}

We now turn to the remaining, main issue of the compactness of the closed unit balls for our D-norm candidates. The strategy we employ relies on a particular source of finite rank operators naturally associated with the Sch{\"o}dinger representations of $\R^2$ via the Weyl calculus. 

Our first step is to introduce the convolution-like operators at the core of our analysis.

\begin{lemma}\label{operator-lemma}
Assume Hypothesis (\ref{hyp-1}). If $f \in L^1(\R^2)$ and:
\begin{equation*}
\sigma_{\eth,d}^f = \iint_{\R^2} f(x,y)\alpha_{\eth,d}^{x,y,\frac{x y}{2}} \, dx dy
\end{equation*}
then $\sigma_{\eth,d}^f$ is a well-defined operator on $\HeisenbergMod{\theta}{p,q,d}$ and $\opnorm{\sigma_{\eth,d}^f}{}{\HeisenbergMod{\theta}{p,q,d}} \leq \|f\|_{L^1(\R^2)}$.
\end{lemma}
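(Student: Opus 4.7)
The plan is to realize $\sigma_{\eth,d}^f$ as a Bochner integral in the Banach space $\HeisenbergMod{\theta}{p,q,d}$ and deduce the norm bound from the triangle inequality for vector-valued integrals. Fix $\xi \in \HeisenbergMod{\theta}{p,q,d}$. First I would verify that the integrand
\begin{equation*}
F_\xi : (x,y) \in \R^2 \longmapsto f(x,y)\,\alpha_{\eth,d}^{x,y,\frac{xy}{2}}\xi \in \HeisenbergMod{\theta}{p,q,d}
\end{equation*}
is strongly measurable. For this, the key ingredient is the joint continuity of $(x,y,u)\mapsto \alpha_{\eth,d}^{x,y,u}\xi$ for the Heisenberg module norm. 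On Schwartz vectors this is precisely Proposition \ref{strongly-continuous-prop}; since $\module{S}_\theta^{p,q,d}$ is dense in $\HeisenbergMod{\theta}{p,q,d}$ and each $\alpha_{\eth,d}^{x,y,u}$ is an isometry by Lemma \ref{alpha-isometry-lemma}, a standard $\frac{\varepsilon}{3}$ approximation argument transfers strong continuity to all of $\HeisenbergMod{\theta}{p,q,d}$. Composing with the continuous group homomorphism $(x,y)\mapsto(x,y,\tfrac{1}{2}xy)$, the map $(x,y)\mapsto \alpha_{\eth,d}^{x,y,xy/2}\xi$ is continuous hence Borel measurable into $\HeisenbergMod{\theta}{p,q,d}$, and multiplying by the measurable scalar function $f$ preserves measurability.

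Next I would check Bochner integrability. Since $\alpha_{\eth,d}^{x,y,xy/2}$ is an isometry, one has
\begin{equation*}
\left\|F_\xi(x,y)\right\|_{\HeisenbergMod{\theta}{p,q,d}} = |f(x,y)|\,\|\xi\|_{\HeisenbergMod{\theta}{p,q,d}}\text{,}
\end{equation*}
so $\|F_\xi(\cdot)\|_{\HeisenbergMod{\theta}{p,q,d}} \in L^1(\R^2)$ with integral equal to $\|f\|_{L^1(\R^2)}\|\xi\|_{\HeisenbergMod{\theta}{p,q,d}}$. Consequently $F_\xi$ is Bochner integrable, and defining
\begin{equation*}
\sigma_{\eth,d}^f \xi = \iint_{\R^2} F_\xi(x,y)\,dx\,dy
\end{equation*}
gives a well-defined element of $\HeisenbergMod{\theta}{p,q,d}$. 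The Bochner integral inequality then yields
\begin{equation*}
\left\|\sigma_{\eth,d}^f \xi\right\|_{\HeisenbergMod{\theta}{p,q,d}} \leq \iint_{\R^2} |f(x,y)|\,\|\xi\|_{\HeisenbergMod{\theta}{p,q,d}}\,dx\,dy = \|f\|_{L^1(\R^2)}\|\xi\|_{\HeisenbergMod{\theta}{p,q,d}}\text{.}
\end{equation*}

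Linearity of $\xi\mapsto \sigma_{\eth,d}^f \xi$ follows from the linearity of the Bochner integral combined with the linearity of each $\alpha_{\eth,d}^{x,y,xy/2}$, so $\sigma_{\eth,d}^f$ is a bounded linear operator on $\HeisenbergMod{\theta}{p,q,d}$ with $\opnorm{\sigma_{\eth,d}^f}{}{\HeisenbergMod{\theta}{p,q,d}} \leq \|f\|_{L^1(\R^2)}$, as required. The only genuine subtlety is the strong continuity extension from Schwartz vectors to the full module; everything else is formal from the isometry property and standard Bochner integration theory.
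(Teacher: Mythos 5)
Your proposal is correct and follows essentially the same route as the paper: the paper's proof likewise computes $\iint |f(x,y)|\,\|\alpha_{\eth,d}^{x,y,xy/2}\xi\|_{\HeisenbergMod{\theta}{p,q,d}}\,dx\,dy = \|f\|_{L^1(\R^2)}\|\xi\|_{\HeisenbergMod{\theta}{p,q,d}}$ using the isometry property from Lemma (\ref{alpha-isometry-lemma}) and concludes well-definedness and the norm bound from the vector-valued integral. You simply make explicit the strong measurability step (via strong continuity extended from Schwartz vectors by density), which the paper leaves implicit.
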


\begin{proof}
Let $\xi \in \HeisenbergMod{\theta}{p,q,d}$. Using Lemma (\ref{alpha-isometry-lemma}), i.e. the fact that $\alpha_{\eth,d}^{x,y,u}$ is an isometry of $\HeisenbergMod{\theta}{p,q,d}$ for all $(x,y,u)\in\HeisenbergGroup$, we simply compute:
\begin{equation*}
\begin{split}
\iint_{\R^2} \left\| f(x,y) \alpha_{\eth,d}^{x,y,\frac{x y}{2}}(\xi)\right\|_{\HeisenbergMod{\theta}{p,q,d}} \,dx dy
&= \iint_{\R^2} |f(x,y)| \left\|\alpha_{\eth,d}^{x,y,\frac{x y}{2}}(\xi)\right\|_{\HeisenbergMod{\theta}{p,q,d}} \, dx dy\\
&= \iint_{\R^2} |f(x,y)| \left\|\xi\right\|_{\HeisenbergMod{\theta}{p,q,d}} \,dx dy\\
&= \|f\|_{L^1(\R^2)} \|\xi\|_{\HeisenbergMod{\theta}{p,q,d}}\text{.} 
\end{split}
\end{equation*}
Thus $\sigma_{\eth,d}^f$ is well-defined, and moreover:
\begin{equation*}
\left\|\sigma_{\eth,d}^f(\xi)\right\|_{\HeisenbergMod{\theta}{p,q,d}} = \left\| \iint_{\R^2} f(x,y) \sigma_{\eth,d}^{x,y}(\xi) \, dx dy\right\|_{\HeisenbergMod{\theta}{p,q,d}} \leq \|f\|_{L^1(\R^2)} \|\xi\|_{\HeisenbergMod{\theta}{p,q,d}} \text{.}
\end{equation*}
This completes our proof.
\end{proof}

We now prove the first of two core lemmas of this section, which provides us with a mean to approximate elements in Heisenberg modules using our convolution-type operators, in a manner which is uniform in our prospective D-norms. This lemma is an adjustment of \cite{Rieffel00} to our context.

\begin{lemma}\label{dnorm-approx-lemma}
Assume Hypothesis (\ref{hyp-1}). Let $\varepsilon > 0$. If $f : \R^2 \rightarrow [0,\infty)$ is measurable and satisfies:
\begin{enumerate}
\item $\int_{\R^2} f = 1$,
\item $\iint_{\R^2} f(x,y)\|(x,y)\| \, dx dy \leq \frac{\varepsilon}{2\pi|\eth|}$,
\end{enumerate}
then for all $\xi \in \HeisenbergMod{\theta}{p,q,d}$:
\begin{equation*}
\left\|\xi - \sigma_{\eth,d}^f \xi\right\|_{\HeisenbergMod{\theta}{p,q,d}} \leq \varepsilon \CDN_\theta^{p,q,d}(\xi) \text{.}
\end{equation*}
\end{lemma}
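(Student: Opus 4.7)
The plan is to exploit directly the definition of $\CDN_\theta^{p,q,d}$ by rewriting $\xi - \sigma_{\eth,d}^f \xi$ as an integral and estimating pointwise. The main point is that if $\CDN_\theta^{p,q,d}(\xi) < \infty$, then unpacking the supremum in Definition (\ref{Heisenberg-dnorm-def}) gives, for every $(x,y)\in\R^2\setminus\{0\}$:
\begin{equation*}
\left\| \sigma_{\eth,d}^{x,y}\xi - \xi \right\|_{\HeisenbergMod{\theta}{p,q,d}} \leq 2\pi|\eth|\, \|(x,y)\| \,\CDN_\theta^{p,q,d}(\xi)\text{,}
\end{equation*}
recalling that $\alpha_{\eth,d}^{x,y,\frac{xy}{2}} = \sigma_{\eth,d}^{x,y}$ from Notation (\ref{Schrodinger-notation}).

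First, I would dispose of the trivial case: if $\CDN_\theta^{p,q,d}(\xi) = \infty$, then by our convention on infinite seminorms, the right-hand side is $\infty$ and the inequality holds automatically. Assume henceforth $\CDN_\theta^{p,q,d}(\xi) < \infty$. Using the hypothesis $\int_{\R^2} f = 1$, write
\begin{equation*}
\sigma_{\eth,d}^f \xi - \xi = \iint_{\R^2} f(x,y)\left(\sigma_{\eth,d}^{x,y}\xi - \xi\right) \,dx\,dy\text{,}
\end{equation*}
where the integral converges in Bochner sense in $\HeisenbergMod{\theta}{p,q,d}$, by the same argument as in Lemma (\ref{operator-lemma}) using that $\sigma_{\eth,d}^{x,y}$ is an isometry (Lemma (\ref{alpha-isometry-lemma})).

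Next, I apply the triangle inequality for Bochner integrals and the pointwise bound above:
\begin{equation*}
\begin{split}
\left\| \sigma_{\eth,d}^f\xi - \xi \right\|_{\HeisenbergMod{\theta}{p,q,d}} &\leq \iint_{\R^2} f(x,y) \left\| \sigma_{\eth,d}^{x,y}\xi - \xi \right\|_{\HeisenbergMod{\theta}{p,q,d}} \,dx\,dy \\
&\leq 2\pi |\eth|\, \CDN_\theta^{p,q,d}(\xi) \iint_{\R^2} f(x,y)\,\|(x,y)\| \,dx\,dy \text{.}
\end{split}
\end{equation*}
The second hypothesis on $f$ then yields the desired bound $\varepsilon\, \CDN_\theta^{p,q,d}(\xi)$.

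There is really no serious obstacle here: the argument is just a quantitative first-order Taylor estimate integrated against $f$. The only mild care needed is to verify that the pointwise bound on $\|\sigma_{\eth,d}^{x,y}\xi - \xi\|$ indeed extends from the differentiable subspace $\module{S}_\theta^{p,q,d}$ to all $\xi$ in the domain of $\CDN_\theta^{p,q,d}$, but this follows immediately since the D-norm is \emph{defined} as the supremum of those ratios (Definition (\ref{Heisenberg-dnorm-def})), so the inequality is tautological for any $\xi$ at which $\CDN_\theta^{p,q,d}$ is finite.
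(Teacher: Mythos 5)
Your proof is correct and follows essentially the same route as the paper: write $\xi - \sigma_{\eth,d}^f\xi$ as an integral of $f(x,y)(\xi - \sigma_{\eth,d}^{x,y}\xi)$ using $\int f = 1$, apply the triangle inequality, and bound each difference quotient by $\CDN_\theta^{p,q,d}(\xi)$ straight from the definition of the D-norm. The extra care you take (the $\CDN_\theta^{p,q,d}(\xi)=\infty$ case and the Bochner convergence via Lemma (\ref{operator-lemma})) is sound and consistent with the paper's conventions.
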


\begin{proof}
If $\xi \in \HeisenbergMod{\theta}{p,q,d}$, then:
\begin{equation*}
\begin{split}
\left\|\xi - \sigma_{\eth,d}^f \xi\right\|_{\HeisenbergMod{\theta}{p,q,d}} &= \left\|\iint_{\R^2} f(x,y) \xi\,dx dy  - \iint_{\R^2} f(x,y) \alpha_{\eth,d}^{x,y,\frac{x y}{2}} \xi\, dx dy \right\|_{\HeisenbergMod{\theta}{p,q,d}} \\
&\leq \iint_{\R^2} f(x,y) \|\xi - \alpha_{\eth,d}^{x,y,\frac{x y}{2}} \xi\|_{\HeisenbergMod{\theta}{p,q,d}} \, dx dy \\
&\leq \iint_{\R^2} f(x,y) 2\pi|\eth| \|(x,y)\| \frac{\|\xi - \alpha_{\eth,d}^{x,y,\frac{x y}{2}} \xi\|_{\HeisenbergMod{\theta}{p,q,d}}}{2\pi|\eth|\|(x,y)\|} \, dx dy \\
&\leq \iint_{\R^2}  f(x,y) 2\pi|\eth|\|(x,y)\| \CDN_\theta^{\rho} (\xi) \, dx dy  \\
&= \CDN_\theta^{p,q,d}(\xi) \left( 2\pi|\eth| \frac{\varepsilon}{2\pi|\eth|} \right) \\ 
&= \varepsilon \CDN_{\theta}^\rho(\xi) \text{,}
\end{split}
\end{equation*}
as desired.
\end{proof}

We now ensure that we indeed have an ample source of functions which meet the hypothesis of Lemma (\ref{dnorm-approx-lemma}).

\begin{notation}
If $(E,d)$ is a metric space then the closed ball $\{x\in E : d(x_0,x) \leq r\}$ of center $x_0 \in E$ and radius $r \geq 0$ is denoted by $E[x_0,r]$.
\end{notation}

The following lemma is valid for any norm on $\R^2$; we shall work within our context with the fixed norm $\|\cdot\|$.

\begin{lemma}\label{approx-unit-lemma}
For all $n\in\N$, let $\psi_n : \R^2 \rightarrow [0,\infty)$ be an integrable function supported on $\R^2\left[ 0, \frac{1}{n+1}\right]$ and with $\int_{\R^2} \psi_n = 1$.

If $f : \R^2 \rightarrow [0,\infty)$ is integrable on some ball centered at $0$ in $(\R^2,\|\cdot\|)$, and $f$ continuous at $0$, then:
\begin{equation*}
\lim_{n\rightarrow\infty} \iint_{\R^2} \psi_n(x,y)  f(x,y) \, dx dy = f(0) \text{.}
\end{equation*}
\end{lemma}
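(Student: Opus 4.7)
The plan is to carry out a standard approximate-identity argument. The key observation is that, since $f$ is continuous at $0$, for any $\varepsilon > 0$ there is some $\delta > 0$ such that $|f(x,y) - f(0)| < \varepsilon$ whenever $\|(x,y)\| \leq \delta$. Once $n$ is large enough that $\frac{1}{n+1} \leq \delta$, the support of $\psi_n$ lies inside $\R^2[0,\delta]$, which is where $f$ is both integrable (by hypothesis of integrability on some ball around $0$) and close to $f(0)$.

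First I would check that the integral $\iint \psi_n f$ makes sense for all sufficiently large $n$. Since $f$ is continuous at $0$, it is bounded on $\R^2[0,\delta]$ for $\delta$ small enough; combined with $\psi_n \in L^1(\R^2)$ and the support condition, this gives $\psi_n f \in L^1(\R^2)$. Then I would use the normalization $\int \psi_n = 1$ to rewrite
\begin{equation*}
\iint_{\R^2} \psi_n(x,y) f(x,y)\, dx\, dy - f(0) = \iint_{\R^2} \psi_n(x,y)\bigl(f(x,y) - f(0)\bigr) dx\, dy.
\end{equation*}

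Taking the absolute value and using $\psi_n \geq 0$ together with the support condition, for $n$ large enough with $\frac{1}{n+1} \leq \delta$:
\begin{equation*}
\left|\iint_{\R^2} \psi_n f - f(0)\right| \leq \iint_{\R^2[0,\frac{1}{n+1}]} \psi_n(x,y)\, |f(x,y) - f(0)|\, dx\, dy \leq \varepsilon \iint_{\R^2} \psi_n = \varepsilon.
\end{equation*}
Since $\varepsilon > 0$ was arbitrary, the desired limit follows.

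There is no real obstacle here; the only point requiring a hint of care is verifying integrability of $\psi_n f$, which is handled by continuity of $f$ at $0$ (hence local boundedness) together with the shrinking-support hypothesis on $\psi_n$. The argument does not use any specific structure of the norm $\|\cdot\|$ on $\R^2$, consistent with the remark preceding the lemma.
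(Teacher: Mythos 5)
Your proof is correct and follows essentially the same route as the paper's: fix $\varepsilon$, use continuity at $0$ to control $|f-f(0)|$ on a small ball, wait until the support of $\psi_n$ falls inside that ball, and use the normalization $\int\psi_n=1$ to bound the difference by $\varepsilon$. The integrability of $\psi_n f$ is handled the same way (shrinking support plus local control of $f$ near $0$), so there is nothing to add.
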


\begin{proof}
Let $\delta > 0$ such that $f$ is integrable on $\R^2[0,\delta]$.

Let $\varepsilon > 0$. Since $f$ is continuous at $0$, there exists $\delta_c > 0$ such that $|f(x) - f(0)| \leq \varepsilon$ for all $x \in \R^2[0,\delta_c]$.

Let $N \in \N$ be chosen so that $\frac{1}{N+1} \leq \min \{\delta,\delta_c\}$. For all $n > N$, we first note that since $\psi_n$ is supported on a subset of $\R^2[0,\delta]$, the function $\psi_n f$ is integrable on $\R^2$. Moreover for all $n\geq N$:

\begin{equation*}
\begin{split}
\left |\iint_{\R^2} \psi_n(x,y) f(x,y)\, dx dy - f(0) \right| &\leq \int_{\R^2} |\psi_n(x,y) (f(x,y) - f(0))| \,dx dy \\
&= \iint_{\R^2[0,n^{-1}]} |\psi_n(x,y)| |f(x,y) - f(0)| \, dx dy \\
&\leq \iint_{\R^2[0,n^{-1}]} \psi_n(x,y) \varepsilon \, dx dy \leq \varepsilon \text{.} 
\end{split}
\end{equation*}

Thus we have shown that $\lim_{n\rightarrow\infty} \int_{\R^2} \psi_n(x,y) f(x,y) \, dx dy = f(0)$. 
\end{proof}

We are now ready to prove the second core lemma of this section. We begin with an explanation of the ideas and reasons behind this lemma. 

By a compact operator on a Banach space $(E,\|\cdot\|_{\C^d})$, we mean as usual an operator which maps bounded subsets of $E$ to totally bounded subsets of $E$. 

The map $f \in L^1(\R^2) \mapsto \sigma_{\eth,d}^f$ is a *-representation of the twisted convolution algebra $L^1(\R^2)$ for the convolution product defined for all $f , g \in L^1(\R^2)$ and $x\in \R^2$ by:
\begin{equation*}
f\conv{\eth} g (x) = \int_{\R^2} f(y) g(x-y) \cocycle{\eth}(y,x-y) \,dy
\end{equation*}
and the involution:
\begin{equation*}
f\in L^1(\R^2) \mapsto f^\ast = x \in \R^2 \mapsto \overline{f(-x)}\text{,}
\end{equation*}
as can be directly checked, or is established in \cite{Folland89}. It is an important, well-known fact \cite[Theorem 1.30]{Folland89} that this representation is valued in the algebra of compact operators on $L^2(\R)\otimes\C^d$, and is faithful; the completion of $(L^1(\R^2),\conv{\eth},\ast)$ for the norm $f\in\ell^1(\Z^2) \mapsto \|f\|_{C^\ast(\R^2,\mathrm{e}_\eth)} = \opnorm{\sigma^f_{\eth,1}}{}{L^2(\R)}$ is the entire algebra of compact operators.

The fact that $\sigma^f_{\eth,d}$ is compact as an operator of $L^2(\R)\otimes\C^d$ does not immediately imply that it is compact for the Banach space $\left(\HeisenbergMod{\theta}{p,q,d},\|\cdot\|_{\HeisenbergMod{\theta}{p,q,d}}\right)$ since in general, we only know that $\|\cdot\|_{L^2(R)} \leq\|\cdot\|_{\HeisenbergMod{\theta}{p,q,d}}$. We thus must prove compactness of these operators for our $C^\ast$-Hilbert norm. However, we can extract the essential tools for our work from the expansive work on Laguerre expansion of functions and the study of the Moyal plane. We will prove that, at least when $f$ is a radial function, then we can approximate $\sigma_{\eth,d}^f$ by finite rank operators, in norm. To this end, we need a supply of finite rank operators, which provide a mean to approximate any $\sigma_{\eth,d}^f$ for $f$ radial. The theory of the quantum harmonic oscillator provides us with a well-suited family of finite rank projections, obtained as $\sigma_{\eth,d}^{\psi}$ for $\psi$ a properly scaled Laguerre function \cite[Ch. 1, sec. 9]{Folland89}. 

To obtain the desired approximation result, however, we need to approximate our radial functions in the norm of $L^1(\R^2)$ using functions obtained from Laguerre functions. As Laguerre functions form an orthonormal basis for some $L^2$ space, we certainly do have a Laguerre expansion which converges in some $L^2$ norm, but convergence in $L^1(\R^2)$ is highly not trivial. 

The work of Sundaram Thangaveru in \cite{Thangavelu93} comes to our rescue, however, by proving that we may obtain the desired convergence if we replace the Laguerre expansion series by the sequence of its C{\'e}saro averages. We now formalize our discussion in the next key lemma.

\begin{lemma}\label{compact-operator-lemma}
If $f : \R_+ \rightarrow \R$ is a function such that $r\in\R\mapsto r f(r)$ is Lebesgue integrable, and if we set:
\begin{equation*}
f^\circ : (x,y) \in \R^2 \mapsto f\left(\sqrt{x^2+y^2}\right)\text{,}
\end{equation*}
then the operator $\sigma_{\eth,d}^{f^\circ}$ is a compact operator for the Banach space $\left(\HeisenbergMod{\theta}{p,q,d},\|\cdot\|_{\HeisenbergMod{\theta}{p,q,d}}\right)$.
\end{lemma}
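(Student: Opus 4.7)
The plan is to exhibit $\sigma_{\eth,d}^{f^\circ}$ as a norm limit of finite rank operators on $\HeisenbergMod{\theta}{p,q,d}$, following exactly the roadmap sketched in the paragraphs just before the lemma. First I would observe that the hypothesis on $f$ is equivalent to $f^\circ \in L^1(\R^2)$, since by polar coordinates $\|f^\circ\|_{L^1(\R^2)} = 2\pi \int_0^\infty |f(r)| r \, dr < \infty$. Thus Lemma \ref{operator-lemma} gives that $\sigma_{\eth,d}^{f^\circ}$ is a bounded operator on the Heisenberg module, and more importantly, the map $g \in L^1(\R^2) \mapsto \sigma_{\eth,d}^g$ is continuous from $(L^1(\R^2),\|\cdot\|_{L^1(\R^2)})$ to the operator norm on $\HeisenbergMod{\theta}{p,q,d}$. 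So it will suffice to approximate $f^\circ$ in $L^1(\R^2)$ by radial functions whose associated operators are finite rank on the Heisenberg module.

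Next, I would appeal to the theory of the quantum harmonic oscillator. For an appropriate $\eth$-dependent scaling of the Laguerre functions $\{L_n\}_{n\in\N}$, lifted to $\R^2$ as radial functions $L_n^\circ(x,y) = L_n(\sqrt{x^2+y^2})$, each operator $\sigma_{\eth,1}^{L_n^\circ}$ is a rank-one orthogonal projection on $L^2(\R)$ onto the $n$-th Hermite function (see \cite[Ch.~1, \S 9]{Folland89}). Consequently, $\sigma_{\eth,d}^{L_n^\circ} = \sigma_{\eth,1}^{L_n^\circ}\otimes \mathrm{id}_{\C^d}$ has range contained in a finite dimensional subspace of $\mathcal{S}(\R)\otimes \C^d = \module{S}_\theta^{p,q,d}\subseteq \HeisenbergMod{\theta}{p,q,d}$. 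Hence every finite linear combination of the $\sigma_{\eth,d}^{L_n^\circ}$ has finite rank \emph{as an operator on the Heisenberg module}, not merely on $L^2(\R)\otimes \C^d$.

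The analytic heart of the proof is then the following approximation result of Thangavelu \cite{Thangavelu93}: for every radial $g \in L^1(\R^2)$, the Cesàro means (of sufficiently large order) of the Laguerre expansion of $g$ converge to $g$ in $L^1(\R^2)$. Applying this to the radial function $f^\circ$, I obtain a sequence $(g_N)_{N\in\N}$ of finite linear combinations of the $L_n^\circ$ with $\|g_N - f^\circ\|_{L^1(\R^2)} \to 0$. Combining this with Lemma \ref{operator-lemma}, we get
\begin{equation*}
\opnorm{\sigma_{\eth,d}^{f^\circ} - \sigma_{\eth,d}^{g_N}}{}{\HeisenbergMod{\theta}{p,q,d}} \leq \|f^\circ - g_N\|_{L^1(\R^2)} \xrightarrow[N\to\infty]{} 0 \text{.}
\end{equation*}
Since each $\sigma_{\eth,d}^{g_N}$ is a finite rank operator on $\HeisenbergMod{\theta}{p,q,d}$, the operator $\sigma_{\eth,d}^{f^\circ}$ is a norm limit of finite rank operators and therefore compact on the Banach space $(\HeisenbergMod{\theta}{p,q,d},\|\cdot\|_{\HeisenbergMod{\theta}{p,q,d}})$.

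The principal obstacle in executing this plan is not the structure of the argument, which is transparent, but the bookkeeping around the Laguerre functions. One has to verify carefully the $\eth$-dependent scaling that makes $\sigma_{\eth,1}^{L_n^\circ}$ equal to the rank-one projection onto the $n$-th oscillator eigenstate (the standard computation in \cite{Folland89} is for a particular choice of Planck constant and needs rescaling), and one must cite Thangavelu's Cesàro summability theorem for radial $L^1$ Laguerre expansions in a form that applies to our scaled basis. Once these two ingredients are lined up, the compactness statement falls out immediately from Lemma \ref{operator-lemma}.
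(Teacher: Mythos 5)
Your proposal is correct and follows essentially the same route as the paper's proof: approximate the radial function $f^\circ$ in $L^1(\R^2)$ by the C{\'e}saro means of its Laguerre expansion (Thangavelu), observe that the suitably rescaled Laguerre functions yield rank-one Hermite projections and hence finite rank operators on the module, and transfer $L^1$-convergence to operator-norm convergence via Lemma (\ref{operator-lemma}). The only detail the paper adds that your ``appropriate scaling'' remark glosses over is the case $\eth<0$, which it reduces to $\eth>0$ through the identity $\alpha_{\eth,d}^{x,y,u}=\alpha_{-\eth,d}^{x,-y,-u}$, since the scaled Laguerre functions $\psi_\eth^n$ are only defined for $\eth>0$.
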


\begin{proof}
Our goal is to write $\sigma_{\eth,d}^{f^\circ}$ as a limit, in the operator norm, of finite rank operators. To this end, let us first assume that $\eth > 0$ and for all $n\in\N$, we let $\psi_\eth^n$ be the $n^{\mathrm{th}}$ Laguerre function defined for all $r \in [0,\infty)$ by:
\begin{equation*}
\psi_\eth^n (r) = \eth \exp\left(- \frac{\pi \eth r^2}{2}\right) \operatorname{L}_n\left( \pi \eth r^2 \right) \text{,}
\end{equation*} 
where $\operatorname{L}_n$ is the $n^{\mathrm{th}}$ Laguerre polynomials, given for all $x\in \R$ by:
\begin{equation*}
\operatorname{L}_n (x) = \sum_{j=1}^n \frac{(-1)^j}{j!} {n \choose n - j}x^j \text{.}
\end{equation*}
Note that these functions are given in \cite[(6.1.17)]{Thangavelu93} for $\eth = \frac{1}{\pi}$. An observation which will be important for us in later proofs is that $\psi_\eth^n = \eth \psi_1^n(\sqrt{\eth}\cdot)$, i.e. we can obtain all the Laguerre functions we are considering via a simple rescaling.

By slight abuse of notation, we denote by $L^p(\R_+,r dr)$ the $p$-Lebesgue space for the measure defined, for all measurable $f : [0,\infty]\rightarrow [0,\infty)$, by $\int_0^\infty f(r) \, r dr$. In particular, note that the inner product of $L^2(\R_+, r dr)$ is given for any two $f, g \in L^2(\R,r dr)$, by:
\begin{equation*}
\inner{f}{g}{L^2(\R_+,r dr)} = \int_0^\infty f(r) \overline{g}(r) \, rdr \text{.}
\end{equation*}

With all these notations set, we define, for each $n\in\N\setminus\{0\}$, the $n^{\mathrm{th}}$ C{\'e}saro sum of the series given by the Laguerre expansion of $f$:
\begin{equation*}
\mathrm{C}_\eth^n(f) = \sum_{j=0}^n \frac{n + 1 -j}{n + 1} \inner{f \, \psi^j_\eth}{\psi^j_\eth}{L^2(\R_+,r dr)}\psi_\eth^j \text{.}
\end{equation*}

Then by the work of S. Thangavelu in \cite[Theorem 6.2.1]{Thangavelu93} --- where our $\psi_\eth^j$ is a rescaled version of the function denoted by $\psi_j^0$ in \cite[Chapter 6]{Thangavelu93} and we use the C{\'e}saro sums for ``$\delta = 1$'' in his notations --- we conclude:

\begin{equation*}
\lim_{n\rightarrow \infty} \left\|\mathrm{C}_\eth^n f  - f \right\|_{L^1(\R_+, r dr)} = 0\text{.}
\end{equation*}

Now, a quick computation shows that for all $n\in\N\setminus\{0\}$:
\begin{equation*}
\left\|(\mathrm{C}_\eth^j(f))^\circ - f^\circ\right\|_{L^1(\R^2)} = \left\|\mathrm{C}_\eth^j(f)  - f \right\|_{L^1(\R_+, r dr)}\text{,}
\end{equation*}
and therefore:
\begin{equation*}
\lim_{n\rightarrow\infty} \left\|(\mathrm{C}_\eth^n(f))^\circ - f^\circ\right\|_{L^1(\R^2)} = 0
\end{equation*}
where of course, $L^1(\R^2)$ stands for the $1$-Lebesgue space with respect to the usual Lebesgue measure on $\R^2$.

By Lemma (\ref{operator-lemma}), writing $\kappa_n = (\mathrm{C}_\eth^n(f))^\circ$ for all $n\in\N$, we then conclude:
\begin{equation*}
\lim_{n\rightarrow\infty} \opnorm{\sigma_{\eth,d}^{\kappa_n} - \sigma_{\eth,d}^{f^\circ}}{}{\HeisenbergMod{\theta}{p,q,d}} = 0\text{.}
\end{equation*}

By construction, $\sigma_{\eth,d}^{\kappa_n}$ is finite rank. Indeed, the operator $\sigma_{\eth,d}^{\kappa_n}$ is a linear combination of the operators $\sigma_{\eth,d}^{(\psi_\eth^j)^\circ}$ with $j\in\{0,\ldots,n\}$. The operators $\sigma_{\eth,d}^{(\psi_\eth^j)^\circ}$ are, in turn, projections on $\C\mathcal{H}_\eth^j \otimes\C^d \subseteq L^2(\R)\otimes\C^d$, where $\mathcal{H}_\eth^n$ is the Hermite function:
\begin{equation*}
\mathcal{H}_\eth^j : t \in \R \mapsto \frac{\left(2\eth\right)^{\frac{1}{4}}}{\sqrt{j! 2^j}} \exp\left(-\frac{ t^2 \sqrt{2 \pi \eth} }{2}\right)\mathrm{H}_j\left(t \sqrt{2\pi\eth}\right)
\end{equation*}
where $\operatorname{H}_j$ is the $j^{\mathrm{th}}$ Hermite polynomial, given for instance by:
\begin{equation*}
\operatorname{H}_j : t \in \R \mapsto (-1)^j \exp(t^2) \frac{\mathrm{d}^j}{\mathrm{d} t^j} \exp(-t^2) \text{.}
\end{equation*}
Indeed, by \cite[p. 65]{Folland89}, the operators $\sigma_{\eth,1}^{(\psi_\eth^j)^\circ}$ are projections on $\C\mathcal{H}_j \subseteq L^2(\R)$ for all $j\in\N$. We note that reassuringly, we will not need the explicit form of the Hermite polynomials or the Laguerre polynomials in our work.

Thus the image of the unit ball $\HeisenbergMod{\theta}{p,q,d}[0,1]$ of $\left(\HeisenbergMod{\theta}{p,q,d},\|\cdot\|_{\HeisenbergMod{\theta}{p,q,d}}\right)$ by $\sigma_{\eth,d}^{\kappa_n}$ is totally bounded in $\left(\HeisenbergMod{\theta}{p,q,d},\|\cdot\|_{\HeisenbergMod{\theta}{p,q,d}}\right)$ for all $n\in\N$, as a bounded subset of a finite dimensional space (as all norms are equivalent in finite dimension, this observation does not depend on $\|\cdot\|_{\HeisenbergMod{\theta}{p,q,d}}$).

Thus $\sigma_{\eth,d}^{f^\circ}$ is compact as the norm limit of compact operators.

We are left to treat the case when $\eth < 0$. We note that for all $(x,y,u) \in \HeisenbergGroup$, we have:
\begin{equation*}
\alpha_{\eth,d}^{x,y,u} = \alpha_{-\eth,d}^{x,-y,-u}\text{.}
\end{equation*}
We thus proceed as above with $-\eth$ in place of $\eth$, and note that $\sigma_{\eth,d}^{\kappa_n} = -\sigma_{-\eth,d}^{\kappa_n}$ since $\kappa_n$ is a radial function. The rest of the proof is left unchanged.
\end{proof}

With Lemma (\ref{compact-operator-lemma}) and Lemma (\ref{dnorm-approx-lemma}), we are now able to prove the desired property for our D-norms:

\begin{lemma}\label{compactness-lemma}
We assume Hypothesis (\ref{hyp-1}). The set:
\begin{equation*}
\modlip{1}{\CDN_\theta^{p,q,d}} = \left\{ \xi \in \HeisenbergMod{\theta}{p,q,d} : \CDN_\theta^{p,q,d}(\xi) \leq 1 \right\}
\end{equation*}
is compact in $\left(\HeisenbergMod{\theta}{p,q,d},\left\|\cdot\right\|_{\HeisenbergMod{\theta}{p,q,d}}\right)$.
\end{lemma}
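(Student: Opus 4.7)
The plan is to show that $\modlip{1}{\CDN_\theta^{p,q,d}}$ is closed and totally bounded in the Hilbert module norm, hence compact. Since $\CDN_\theta^{p,q,d}$ dominates $\|\cdot\|_{\HeisenbergMod{\theta}{p,q,d}}$ by construction, the set sits inside the closed norm-ball of radius $1$; and by the lower semi-continuity of $\CDN_\theta^{p,q,d}$ recorded in Proposition (\ref{DLeibniz-prop}), it is closed. Completeness of $\HeisenbergMod{\theta}{p,q,d}$ will then reduce the problem to establishing total boundedness.

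For total boundedness, I would fix $\varepsilon > 0$ and construct a nonnegative radial function $f_\varepsilon^\circ$ on $\R^2$ of the form $f_\varepsilon^\circ(x,y) = g_\varepsilon(\sqrt{x^2 + y^2})$, where $g_\varepsilon$ is a continuous nonnegative function supported on a sufficiently small interval $[0,R]$. Polar coordinates give $\int_{\R^2} f_\varepsilon^\circ = 2\pi \int_0^\infty g_\varepsilon(r) r\,dr$ and, by the $1$-homogeneity of $\|\cdot\|$, $\iint f_\varepsilon^\circ(x,y)\|(x,y)\|\,dxdy = K \int_0^\infty g_\varepsilon(r) r^2\,dr$ for the constant $K = \int_0^{2\pi}\|(\cos\phi,\sin\phi)\|\,d\phi$. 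Normalizing $g_\varepsilon$ to make the first integral equal to $1$, and then shrinking $R$ (which bounds the second integral by $R$ times the first), lets us meet both hypotheses of Lemma (\ref{dnorm-approx-lemma}). That lemma then yields $\|\xi - \sigma_{\eth,d}^{f_\varepsilon^\circ}\xi\|_{\HeisenbergMod{\theta}{p,q,d}} \leq \varepsilon$ for every $\xi \in \modlip{1}{\CDN_\theta^{p,q,d}}$.

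The decisive input is Lemma (\ref{compact-operator-lemma}): because $f_\varepsilon^\circ$ is radial and $r \mapsto r g_\varepsilon(r)$ is integrable, $\sigma_{\eth,d}^{f_\varepsilon^\circ}$ is a \emph{compact} operator on $(\HeisenbergMod{\theta}{p,q,d},\|\cdot\|_{\HeisenbergMod{\theta}{p,q,d}})$. Since $\modlip{1}{\CDN_\theta^{p,q,d}}$ is norm-bounded (by $1$), its image under $\sigma_{\eth,d}^{f_\varepsilon^\circ}$ is totally bounded in the Hilbert module norm, so admits a finite $\varepsilon$-net; pulling back via the $\varepsilon$-approximation from the previous paragraph gives a finite $2\varepsilon$-net of $\modlip{1}{\CDN_\theta^{p,q,d}}$. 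As $\varepsilon > 0$ was arbitrary, total boundedness is proven.

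The genuinely hard step has already been absorbed into Lemma (\ref{compact-operator-lemma}): one needs compactness with respect to the $C^\ast$-Hilbert module norm, which is generally stronger than the ambient $L^2$-norm on which the compactness of $\sigma_{\eth,d}^{f^\circ}$ is classical; Thangavelu's $L^1$-convergence of Cesàro means of Laguerre expansions is what makes this available for radial symbols. With that machinery in hand, the present lemma is merely the packaging of the approximation estimate of Lemma (\ref{dnorm-approx-lemma}) together with compactness into a standard total-boundedness argument.
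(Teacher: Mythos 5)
Your proof is correct and follows essentially the same route as the paper: closedness via lower semi-continuity, then total boundedness by combining the approximation estimate of Lemma (\ref{dnorm-approx-lemma}) with the compactness of $\sigma_{\eth,d}^{f^\circ}$ from Lemma (\ref{compact-operator-lemma}). The only cosmetic difference is that you bound $\iint f_\varepsilon^\circ(x,y)\|(x,y)\|\,dx\,dy$ directly from the support of $g_\varepsilon$, whereas the paper packages this smallness as Lemma (\ref{approx-unit-lemma}) applied to a sequence of radial bump functions.
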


\begin{proof}
Let $(\psi_n)_{n\in\N}$ be a sequence of smooth functions from $[0,\infty)$ to $[0,\infty)$ such that for all $n\in\N$, the function $\psi_n$ is supported on $\left[-\frac{1}{n+1},\frac{1}{n+1}\right]$ and:
\begin{equation*}
\int_0^\infty \psi_n(r) \, rdr = \frac{1}{2\pi}\text{.}
\end{equation*}

Thus, using the notations of Lemma (\ref{compact-operator-lemma}), we note that:
\begin{equation*}
\int_{\R^2}\psi_n^\circ = \int_{-\frac{\pi}{2}}^{\frac{\pi}{2}}\int_0
^\infty \psi_n(r) \,r dr d\theta = \frac{2\pi}{2\pi} = 1\text{.}
\end{equation*}

Let $\varepsilon > 0$ be given. By Lemma (\ref{approx-unit-lemma}), we have:
\begin{equation*}
\lim_{n\rightarrow\infty} \iint_{\R^2} \psi_n^\circ(x,y) \|(x,y)\| \, dx dy = 0 \text{.}
\end{equation*}
Thus, there exists $N\in \N$ such that for all $n\geq N$, the following inequality holds:
\begin{equation*}
\iint_{\R^2} \psi_n^\circ(x,y) \|(x,y)\| \, dx dy < \frac{\varepsilon}{4\pi\eth}
\end{equation*}

We may thus apply Lemma (\ref{dnorm-approx-lemma}) to conclude that for all $\xi \in \modlip{1}{\CDN_\theta^{p,q,d}}$ and $n \geq N$:
\begin{equation*}
\left\| \xi - \sigma_{\eth,d}^{\psi_n^\circ}\xi\right\|_{\HeisenbergMod{\theta}{p,q,d}} \leq \frac{\varepsilon}{2} \text{.}
\end{equation*}

Now, $\sigma_{\eth,d}^{\psi_n^\circ}$ is compact in $\left(\HeisenbergMod{\theta}{p,q,d},\|\cdot\|_{\HeisenbergMod{\theta}{p,q,d}}\right)$ by Lemma (\ref{compact-operator-lemma}), and $\modlip{1}{\CDN_\theta^{p,q,d}}$ is bounded for $\|\cdot\|_{\HeisenbergMod{\theta}{p,q,d}}$ by construction. Thus the image of $\modlip{1}{\CDN_\theta^{p,q,d}}$ by $\sigma_{\eth,d}^{\psi_n^\circ}$ is totally bounded in $\left(\HeisenbergMod{\theta}{p,q,d},\|\cdot\|_{\HeisenbergMod{\theta}{p,q,d}}\right)$ for all $n\in\N$. In particular, there exists a $\frac{\varepsilon}{2}$-dense subset $\module{B}_\varepsilon$ in $\sigma_{\eth,d}^{\psi_N^\circ}\modlip{1}{\CDN_\theta^{p,q,d}}$. 

Consequently, if $\xi\in\modlip{1}{\CDN_\theta^{p,q,d}}$, then there exists $\eta \in \module{B}_\varepsilon$ such that:
\begin{equation*}
\left\|\eta - \sigma_{\eth,d}^{\psi_N^\circ}\xi\right\|_{\HeisenbergMod{\theta}{p,q,d}} \leq \frac{\varepsilon}{2}\text{.}
\end{equation*}
Thus $\left\|\xi - \eta\right\|_{\HeisenbergMod{\theta}{p,q,d}} \leq \varepsilon$.

We thus conclude that $\modlip{1}{\CDN_\theta^{p,q,d}}$ is totally bounded.

Moreover, for all $(x,y) \in \R^2$, the map $\xi \mapsto \frac{\|\alpha_{\eth,d}^{x,y,\frac{x y}{2}}\xi - \xi\|_{\HeisenbergMod{\theta}{p,q,d}}}{2\pi|\eth|\|(x,y)\|}$ is continuous, and thus $\CDN_\theta^{p,q,d}$ is lower semi-continuous with respect to $\|\cdot\|_{\HeisenbergMod{\theta}{p,q,d}}$. Hence $\modlip{1}{\CDN_\theta^{p,q,d}} = \left(\CDN_\theta^{p,q,d}\right)^{-1}((-\infty,1])$ is closed. Since $\HeisenbergMod{\theta}{p,q,d}$ is complete and $\modlip{1}{\CDN_\theta^{p,q,d}}$ is closed and totally bounded, it is in fact compact, as desired.
\end{proof}

We summarize the results of this section with the following theorem announcing that indeed, we have defined D-norms on Heisenberg modules, turning them into {\gQVB s} over quantum $2$-tori.

\begin{theorem}\label{HeisenbergMod-dnorm-thm}
Let $\HeisenbergMod{\theta}{p,q,d}$ be the Heisenberg module over $\qt{\theta}$ for some $\theta\in\R$,  $p\in\Z$, $q \in \N\setminus\{0\}$ and $d\in q\N\setminus\{0\}$. Let $\eth = \theta - \frac{p}{q}$ and assume $\eth \not= 0$. Let $\|\cdot\|$ be a norm on $\R^2$. If we set, for all $\xi \in \HeisenbergMod{\theta}{p,q,d}$:
\begin{equation*}
\CDN_\theta^{p,d,q}(\xi) = \sup\left\{ \norm{\xi}{\HeisenbergMod{\theta}{p,q,d}}, \frac{\left\|\sigma_{\eth,d}^{x,y}\xi - \xi\right\|_{\HeisenbergMod{\theta}{p,q,d}}}{2\pi|\eth|\|(x,y)\|}  : (x,y) \in \R^2\setminus\{0\} \right\} \text{,}
\end{equation*}
and for all $a\in\qt{\theta}$:
\begin{equation*}
\Lip_\theta(a) = \sup\left\{\frac{\left\|\beta_\theta^{\exp(ix),\exp(iy)}a - a\right\|_{\qt{\theta}}}{\|(x,y)\|} : (x,y)\in\R^2\setminus\{0\} \right\}
\end{equation*}
then $\left(\HeisenbergMod{\theta}{p,q,d}, \inner{\cdot}{\cdot}{\HeisenbergMod{\theta}{p,q,d}}, \CDN_\theta^{p,d,q}, \qt{\theta}, \Lip_\theta\right)$ is a Leibniz {\gQVB}.
\end{theorem}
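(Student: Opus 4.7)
The plan is to verify, one by one, the five conditions in Definition \ref{qvb-def}, essentially by assembling the preparatory results established in the previous sections; the hard analytical work has already been done.

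First, I would handle the base space: the dual action $\beta_\theta$ of $\T^2$ on $\qt{\theta}$ is a strongly continuous action by $\ast$-automorphisms of a compact, connected, Abelian Lie group, and it is ergodic by Theorem-Definition \ref{dual-action-thm}. The seminorm $\Lip_\theta$ is exactly the one associated to $\beta_\theta$ and the continuous length function on $\T^2$ induced by $\|\cdot\|$, as described by Corollary \ref{compact-seminorm-connection-cor}. Thus Rieffel's Theorem \ref{Rieffel-L-seminorm-thm} applies directly and gives that $(\qt{\theta},\Lip_\theta)$ is a Leibniz quantum compact metric space. Condition (2) of Definition \ref{qvb-def} is part of the construction of $\HeisenbergMod{\theta}{p,q,d}$ in Theorem-Definition \ref{HeisenbergMod-thm}.

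Next, for condition (3), Proposition \ref{DLeibniz-prop}(1) shows $\CDN_\theta^{p,q,d}$ is a norm on a dense subspace of $\HeisenbergMod{\theta}{p,q,d}$, while the inequality $\CDN_\theta^{p,q,d}(\xi)\geq \|\xi\|_{\HeisenbergMod{\theta}{p,q,d}}=\sqrt{\|\inner{\xi}{\xi}{\HeisenbergMod{\theta}{p,q,d}}\|_{\qt{\theta}}}$ is immediate from its very definition. The compactness of the closed unit ball is exactly the content of Lemma \ref{compactness-lemma}, which is the genuinely non-trivial analytic step and whose proof relied on the Laguerre/Ces\`aro approximation of radial functions together with Lemma \ref{dnorm-approx-lemma}.

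For the inner Leibniz inequality (condition (4)), Proposition \ref{DLeibniz-prop}(3) gives the estimate $\CDN_\theta^{p,q,d}(a\xi)\leq \|a\|_{\qt{\theta}}\CDN_\theta^{p,q,d}(\xi)+\Lip_\theta(a)\|\xi\|_{\HeisenbergMod{\theta}{p,q,d}}$; combining this with $\|\xi\|_{\HeisenbergMod{\theta}{p,q,d}}\leq \CDN_\theta^{p,q,d}(\xi)$ yields precisely $\CDN_\theta^{p,q,d}(a\xi)\leq(\|a\|_{\qt{\theta}}+\Lip_\theta(a))\CDN_\theta^{p,q,d}(\xi)$.

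Finally, for the modular Leibniz inequality (condition (5)), Proposition \ref{DLeibniz-prop}(4) provides
\begin{equation*}
\Lip_\theta\!\left(\inner{\xi}{\omega}{\HeisenbergMod{\theta}{p,q,d}}\right)\leq \|\xi\|_{\HeisenbergMod{\theta}{p,q,d}}\CDN_\theta^{p,q,d}(\omega)+\CDN_\theta^{p,q,d}(\xi)\|\omega\|_{\HeisenbergMod{\theta}{p,q,d}}.
\end{equation*}
Since $\beta_\theta$ commutes with the involution, $\Lip_\theta(a^*)=\Lip_\theta(a)$, and therefore $\Lip_\theta(\Re a),\Lip_\theta(\Im a)\leq \Lip_\theta(a)$ for every $a\in\qt{\theta}$. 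Applying this to $a=\inner{\xi}{\omega}{\HeisenbergMod{\theta}{p,q,d}}$ and using $\|\cdot\|_{\HeisenbergMod{\theta}{p,q,d}}\leq \CDN_\theta^{p,q,d}$ bounds both the real and imaginary parts by $2\CDN_\theta^{p,q,d}(\xi)\CDN_\theta^{p,q,d}(\omega)$, as required. I do not anticipate any new obstacle here: the only genuine difficulty in the whole theorem is the compactness of the unit ball, which is already settled by Lemma \ref{compactness-lemma}; everything else is a direct assembly of the lemmas that precede it.
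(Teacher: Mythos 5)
Your proposal is correct and follows essentially the same route as the paper, whose own proof simply cites Proposition (\ref{DLeibniz-prop}) for the norm, density, and the two Leibniz inequalities, and Lemma (\ref{compactness-lemma}) for the compactness of the unit ball. The extra details you supply --- deducing the inner Leibniz form $(\|a\|_{\qt{\theta}}+\Lip_\theta(a))\CDN_\theta^{p,q,d}(\xi)$ from $\|\cdot\|_{\HeisenbergMod{\theta}{p,q,d}}\leq\CDN_\theta^{p,q,d}$, passing to real and imaginary parts via $\Lip_\theta(a^\ast)=\Lip_\theta(a)$, and invoking Theorem (\ref{Rieffel-L-seminorm-thm}) with ergodicity of $\beta_\theta$ for the base space --- are exactly the steps the paper leaves implicit, and they are all valid.
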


\begin{proof}

Proposition (\ref{DLeibniz-prop}) proves that $\CDN_\theta^{p,q,d}$ is a norm on a dense subspace of $\HeisenbergMod{\theta}{p,q,d}$ which satisfies the inner and modular quasi-Leibniz inequalities and, by construction, $\CDN_\theta^{p,q,d} \geq \|\cdot\|_{\HeisenbergMod{\theta}{p,q,d}}$.

Lemma (\ref{compactness-lemma}) moreover gives us that $\modlip{1}{\CDN_\theta^{p,q,d}}$ is compact for $\|\cdot\|_{\HeisenbergMod{\theta}{p,q,d}}$.
\end{proof}

\bibliographystyle{amsplain}
\bibliography{../thesis}
\vfill

\end{document}